\documentclass[a4paper, 11pt, english]{article}
\usepackage[utf8]{inputenc}
\usepackage[T1]{fontenc}
\usepackage{babel}
\usepackage{graphicx,xcolor,soul}
\usepackage{stmaryrd}
\usepackage[a4paper]{geometry}
\geometry{hmargin=3.5cm,vmargin=2.5cm,}
\usepackage{amsmath,amsfonts,amssymb,amsthm,epsfig,epstopdf,url,array}
\usepackage{rotating}
\usepackage[colorlinks=true,citecolor=red,linkcolor=blue,pdfpagetransition=Blinds]{hyperref}
\usepackage{cleveref}
\usepackage{nameref}
\usepackage{enumitem}
\usepackage{comment}
\Crefname{paragraph}{Section}{Sections}
\setcounter{tocdepth}{2}
\setcounter{secnumdepth}{5}
\usepackage{fancyhdr}
\usepackage{dsfont}
\usepackage{todonotes}

\usepackage{fullpage}

\newcommand{\ensemblenombre}[1]{\mathbb{#1}}
\newcommand{\N}{\ensemblenombre{N}}

\newcommand{\R}{} 
\renewcommand{\R}{\ensemblenombre{R}}
\newcommand{\C}{\ensemblenombre{C}}

\newcommand{\T}{\ensemblenombre{T}}


\newcommand{\dive}[1]{\mathrm{div}}


\theoremstyle{plain} 
\newtheorem{pr}{Proposition}[section] 
\newtheorem{tm}[pr]{Theorem}
\newtheorem{lm}[pr]{Lemma}

\newtheorem{cl}[pr]{Claim}
\theoremstyle{definition}

\newtheorem{rmk}[pr]{Remark}

\newtheorem{ass}[pr]{Assumption}

\numberwithin{equation}{section}

\makeatletter
\let\original@addcontentsline\addcontentsline
\newcommand{\dummy@addcontentsline}[3]{}
\newcommand{\DeactivateToc}{\let\addcontentsline\dummy@addcontentsline}
\newcommand{\ActivateToc}{\let\addcontentsline\original@addcontentsline}
\makeatother

\pagestyle{plain}

\begin{document}

\author{ Ludovick Gagnon\footnote{Universit\'e de Lorraine, CNRS, Inria, IECL, F-54000 Nancy, France. E-mail : \tt{ludovick.gagnon@inria.fr}} \qquad  Kévin Le Balc'h\footnote{Inria, Sorbonne Université, CNRS, Laboratoire Jacques-Louis Lions, Paris, France. {\tt kevin.le-balc-h@inria.fr} \quad -- \quad Both authors are partially supported by the Project TRECOS ANR-20-CE40-0009 funded by the ANR (2021--2024).}}

\title{Control of blow-up profiles for the mass-critical focusing nonlinear Schrödinger equation on bounded domains}

\maketitle

\begin{abstract}
In this paper, we consider the mass-critical focusing nonlinear Schrödinger on bounded two-dimensional domains with Dirichlet boundary conditions. In the absence of control, it is well-known that free solutions starting from initial data sufficiently large can blow-up. More precisely, given a finite number of points, there exists particular profiles blowing up exactly at these points at the blow-up time. For pertubations of these profiles, we show that, with the help of an appropriate nonlinear feedback law located in an open set containing the blow-up points, the blow-up can be prevented from happening. More specifically, we construct a small-time control acting just before the blow-up time. The solution may then be extended globally in time. This is the first result of control for blow-up profiles for nonlinear Schrödinger type equations. Assuming further a geometrical control condition on the support of the control, we are able to prove a null-controllability result for such blow-up profiles. Finally, we discuss possible extensions to three-dimensional domains.
\end{abstract}

\tableofcontents

\section{Introduction}

\subsection{The mass-critical focusing nonlinear Schrödinger equation and blow-up}

Let $T>0$ and $\Omega$ be a smooth, bounded domain of $\R^2$. We study the $L^2$-critical focusing nonlinear Schrödinger equation in $\Omega$, with Dirichlet boundary conditions
\begin{equation}
	\label{eq:L2CriticalNLS}
		\left\{
			\begin{array}{ll}
				 i \partial_t \psi + \Delta \psi = - |\psi|^{2} \psi  & \text{ in }  (0,T) \times \Omega, 
				\\
				\psi = 0 & \text{ on } (0,T)\times \partial \Omega, 
				\\
				\psi(0, \cdot) = \psi_0 & \text{ in } \Omega.
			\end{array}
		\right.
\end{equation}
Note that \eqref{eq:L2CriticalNLS} is the focusing nonlinear cubic Schrödinger equation, modelling for instance nonlinear optics for the self-focusing of intense laser beams in hollow core fibers. We refer to \cite{SS99} for further references of physical models that are described by \eqref{eq:L2CriticalNLS}.

By the Sobolev embedding $H^2(\Omega) \hookrightarrow L^{\infty}(\Omega)$, it is classical to prove that \eqref{eq:L2CriticalNLS} is locally well-posed in $H^2(\Omega) \cap H_0^1(\Omega)$. 
\begin{tm}[Lemma 2.1 of \cite{BGT03}]
\label{lm:localwpH2H01}
For any $R>0$, there exists a time $T>0$ such that for every $\psi_0 \in B_R := \{ \psi_0 \in H^2(\Omega) \cap H_0^1(\Omega)\ ;\ \|\psi_0\|_{H^2(\Omega)} \leq R\},$
the problem \eqref{eq:L2CriticalNLS} has a unique solution in $C([0,T];H^2(\Omega)\cap H^1_0(\Omega))$. In addition, the flow map $\psi_0 \to \psi$ is uniformly continuous from $B_R$ to $C([0,T];H^2(\Omega)\cap H^1_0(\Omega))$. 
\end{tm}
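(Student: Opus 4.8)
The natural strategy is a Banach fixed point argument based on the Duhamel formulation. Denote by $S(t) = e^{it\Delta}$ the unitary group on $L^2(\Omega)$ generated by the Dirichlet Laplacian $\Delta_D$, whose domain is precisely $D(\Delta_D) = H^2(\Omega)\cap H^1_0(\Omega)$. Since $S(t)$ commutes with $\Delta_D$ and is an isometry on $L^2(\Omega)$, elliptic regularity yields $\|S(t)\phi\|_{H^2} \leq C\|\phi\|_{H^2}$ uniformly in $t$ for $\phi \in D(\Delta_D)$. I would then recast \eqref{eq:L2CriticalNLS} as the fixed point equation $\psi = \Phi(\psi)$, where
$$\Phi(\psi)(t) := S(t)\psi_0 + i\int_0^t S(t-s)\bigl(|\psi(s)|^2\psi(s)\bigr)\,ds,$$
and seek a fixed point in a ball of $C([0,T];H^2(\Omega)\cap H^1_0(\Omega))$.

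The crucial ingredient is a nonlinear estimate for $F(\psi) := |\psi|^2\psi$ in the space $X := H^2(\Omega)\cap H^1_0(\Omega)$. First, since $\psi$ vanishes on $\partial\Omega$, so does $F(\psi)$, hence $F(\psi)\in H^1_0(\Omega)$ and the Dirichlet condition is preserved by the iteration. To see that $F(\psi)\in H^2(\Omega)$ with a cubic bound, I would differentiate twice and split the second derivatives into terms of the form $|\psi|^2\partial^2\psi$ and $\psi\,\partial\psi\,\partial\psi$. Here the two-dimensional Sobolev embeddings are decisive: $H^2(\Omega)\hookrightarrow L^\infty(\Omega)$ controls the factor $|\psi|^2$ in $L^\infty$, while $H^1(\Omega)\hookrightarrow L^p(\Omega)$ for every finite $p$ lets me place two first-order derivatives in $L^4(\Omega)$ so that their product lies in $L^2(\Omega)$. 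This yields
$$\|F(\psi)\|_X \leq C\|\psi\|_X^3, \qquad \|F(\psi)-F(\phi)\|_X \leq C\bigl(\|\psi\|_X^2 + \|\phi\|_X^2\bigr)\|\psi-\phi\|_X,$$
so $F$ maps $X$ into itself and is locally Lipschitz.

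Combining the semigroup bound with the nonlinear estimate and integrating in time, I obtain, for $\psi,\phi \in C([0,T];X)$, the bound
$$\|\Phi(\psi)\|_{C([0,T];X)} \leq \|\psi_0\|_X + CT\|\psi\|_{C([0,T];X)}^3$$
together with a matching Lipschitz bound with constant $CT(\|\psi\|^2+\|\phi\|^2)$. Fixing the ball $B = \{\psi : \|\psi\|_{C([0,T];X)}\leq 2R\}$, one checks that for $T = T(R)$ small enough (namely $T \lesssim R^{-2}$) the map $\Phi$ sends $B$ into itself and is a strict contraction there; the resulting existence time depends only on $R$, which is exactly the uniformity required over $B_R$. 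The Banach fixed point theorem then provides the unique solution $\psi \in C([0,T];X)$, and a standard Gronwall argument upgrades uniqueness to the whole space. Finally, subtracting the Duhamel formulas for two data $\psi_0,\tilde\psi_0\in B_R$ and absorbing the nonlinear contribution (again using the smallness of $T$) gives $\|\psi-\tilde\psi\|_{C([0,T];X)} \leq 2\|\psi_0-\tilde\psi_0\|_X$, which is the asserted (in fact Lipschitz, hence uniform) continuity of the flow map.

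The step I expect to require the most care is the nonlinear estimate, and specifically the claim that $F$ maps $X$ into $X$ rather than merely into a larger space: one must verify both that the top-order term $\psi\,\partial\psi\,\partial\psi$ genuinely lies in $L^2$—which would fail for $H^1$ data but works here because $H^2\hookrightarrow L^\infty$ in dimension two—and that the boundary trace vanishes so that the Duhamel iterates remain in the domain of $\Delta_D$. Everything else is routine contraction-mapping bookkeeping once these product estimates are in hand.
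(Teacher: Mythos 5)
Your proposal is correct: the paper itself gives no proof of this statement, quoting it directly as Lemma~2.1 of \cite{BGT03}, and your contraction-mapping argument via the Duhamel formula — using that $e^{it\Delta}$ preserves $D(\Delta_D)=H^2(\Omega)\cap H^1_0(\Omega)$, that $H^2$ is an algebra in two dimensions so $\|\,|\psi|^2\psi\,\|_{H^2}\lesssim\|\psi\|_{H^2}^3$, and that the vanishing boundary trace is preserved by the cubic nonlinearity — is precisely the standard proof given in that reference. The Lipschitz bound on the flow map you obtain for $T\lesssim R^{-2}$ is in fact stronger than the asserted uniform continuity, so nothing is missing.
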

Moreover,  for such solutions, the following conservation laws hold for every $t \in [0,T]$,
\begin{align}
\|\psi(t,\cdot)\|_{L^2(\Omega)} &= \|\psi_0\|_{L^2(\Omega)}, \label{eq:consevationmass} \\
E(\psi(t, \cdot)) := \frac{1}{2} \|\nabla \psi(t,\cdot)\|_{L^2(\Omega)}^2 - \frac{1}{4} \|\psi(t,\cdot)\|_{L^{4}(\Omega)}^{4}
&= \frac{1}{2} \|\nabla \psi_0\|_{L^2(\Omega)}^2 - \frac{1}{4} \|\psi_0\|_{L^{4}(\Omega)}^{4} = E(\psi_0).\label{eq:consevationenergy}
\end{align} 
For every $\psi_0 \in H^2(\Omega) \cap H_0^1(\Omega)$, the maximal time of existence $T_{\max}(\psi_0) \in (0,+\infty]$ of the maximal solution $\psi \in C([0,T_{\max}(\psi_0));H^2(\Omega) \cap H_0^1(\Omega))$ of \eqref{eq:L2CriticalNLS} can be defined. Furthermore, we have the following blow-up criterion \cite{Caz03},
\begin{equation}
T_{\max}(\psi_0) < +\infty \Rightarrow \|\psi(t,\cdot)\|_{H^2(\Omega)} \to +\infty\ \text{as}\ t \to T_{\max}(\psi_0)^-.
\end{equation}

Note that the local well-posedness of \eqref{eq:L2CriticalNLS} can be generalized to $H_0^1(\Omega)$.
\begin{tm}[Theorem 3.6.1 of \cite{Caz03}]
\label{lm:localwpH01}
For any $R>0$, there exists a time $T>0$ such that for every $\psi_0 \in B_R := \{ \psi_0 \in  H_0^1(\Omega)\ ;\ \|\psi_0\|_{H_0^1(\Omega)} \leq R\},$
the problem \eqref{eq:L2CriticalNLS} has a unique solution in $C([0,T];H^1_0(\Omega))$. 
\end{tm}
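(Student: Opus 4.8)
The plan is to recast \eqref{eq:L2CriticalNLS} in its Duhamel (mild) form and to solve it by a contraction mapping argument in a suitable Strichartz space. Writing $S(t) = e^{it\Delta}$ for the Dirichlet Schrödinger group on $\Omega$, a function $\psi$ solves \eqref{eq:L2CriticalNLS} if and only if it is a fixed point of
\begin{equation*}
\Phi(\psi)(t) = S(t)\psi_0 + i\int_0^t S(t-s)\bigl(|\psi(s)|^2\psi(s)\bigr)\,ds.
\end{equation*}
Since $S(t)$ commutes with the Dirichlet Laplacian, it is unitary on both $L^2(\Omega)$ and $H_0^1(\Omega)$, so the free evolution $S(t)\psi_0$ is immediately controlled; the whole difficulty is to estimate the Duhamel term carrying the cubic nonlinearity $F(\psi) = |\psi|^2\psi$.

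The first step is to set up the resolution space. Because $H^1(\Omega)$ barely fails to embed in $L^\infty(\Omega)$ in two dimensions, the naive bound $\||\psi|^2\nabla\psi\|_{L^2}\le\|\psi\|_{L^\infty}^2\|\nabla\psi\|_{L^2}$ is unavailable; instead one would estimate the gradient of the nonlinearity through $\||\psi|^2\nabla\psi\|_{L^2}\le\|\psi\|_{L^8}^2\|\nabla\psi\|_{L^4}$, using $H^1(\Omega)\hookrightarrow L^8(\Omega)$ and reserving the additional spatial integrability of $\nabla\psi$ for a space–time Strichartz norm. Accordingly I would work in a ball of a space of the type $X_T = C([0,T];H_0^1(\Omega))\cap L^p((0,T);W^{1,q}(\Omega))$ for an admissible pair satisfying $2/p+2/q=1$, and invoke the Strichartz estimates for the Schrödinger group on the bounded domain $\Omega$ — the very ones underlying Theorem~\ref{lm:localwpH2H01} — together with their inhomogeneous (dual) counterpart, to bound both $\|\Phi(\psi)\|_{X_T}$ and the difference $\|\Phi(\psi_1)-\Phi(\psi_2)\|_{X_T}$.

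Combining the Strichartz estimates with the product estimates for $F(\psi)$ and with Hölder's inequality in the time variable, I expect to reach an inequality of the form
\begin{equation*}
\|\Phi(\psi)\|_{X_T}\le C\|\psi_0\|_{H_0^1(\Omega)} + C\,T^{\theta}\|\psi\|_{X_T}^3,
\end{equation*}
with a strictly positive power $\theta>0$ of $T$, and a matching Lipschitz estimate for the difference. The factor $T^{\theta}$, which reflects the subcritical character of the cubic nonlinearity at the $H^1$ level, is what allows one to choose $T=T(R)$ small enough, depending only on $R$, so that $\Phi$ maps a ball of radius $\sim R$ into itself and is a contraction there. The Banach fixed point theorem then yields a unique fixed point in that ball, hence a solution on $[0,T]$; the same difference estimate combined with a Gronwall argument gives uniqueness in the full class $C([0,T];H_0^1(\Omega))$ and continuity of the flow map.

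The main obstacle is genuinely the Strichartz inequalities on the bounded domain: unlike on $\R^2$, they hold only with a loss of derivatives, and one must check that this loss is affordable for the cubic nonlinearity at the $H^1$ regularity. Here the fact that $H^1$ sits strictly above the threshold regularity for \eqref{eq:L2CriticalNLS} in two dimensions is precisely what leaves enough subcritical room to absorb the loss and still retain $\theta>0$; verifying this bookkeeping is where I would concentrate the technical effort. As an alternative route to existence, one could regularize $\psi_0$ in $H^2(\Omega)\cap H_0^1(\Omega)$, apply Theorem~\ref{lm:localwpH2H01}, and pass to the limit using the a priori $X_T$-bounds above, although uniqueness and the continuity of the flow are most transparently obtained directly from the contraction argument.
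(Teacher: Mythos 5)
There is a genuine gap, and it sits exactly where you located the ``technical effort'': the Strichartz estimates you need do not exist in the form your contraction requires. On a general bounded domain with Dirichlet boundary conditions, no loss-free (scale-invariant) Strichartz estimates are known; the available ones (Burq--G\'erard--Tzvetkov, Anton, Blair--Smith--Sogge) carry a loss of derivatives that is strictly worse than on boundaryless manifolds, and it is not established in the literature that this loss can be absorbed so as to close a fixed-point argument for the cubic nonlinearity at $H^1$ regularity on an arbitrary bounded $\Omega\subset\R^2$. Your proposal also mischaracterizes its own fallback: Theorem~\ref{lm:localwpH2H01} (Lemma 2.1 of \cite{BGT03}) is \emph{not} proved by Strichartz estimates, but by an energy/contraction argument in $C([0,T];H^2\cap H_0^1)$ exploiting the embedding $H^2(\Omega)\hookrightarrow L^\infty(\Omega)$, so there are no ``underlying'' Strichartz inequalities to borrow. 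A decisive internal signal that your route cannot be the intended one is the sentence immediately following the statement in the paper: the uniform continuity of the flow map in $H_0^1$ is ``not clear''. Any contraction-mapping proof would automatically produce a locally Lipschitz flow map, so if your argument closed, it would settle a point the authors explicitly leave open.

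The result as cited (Theorem 3.6.1 of \cite{Caz03}) is instead proved by a compactness method that avoids Strichartz entirely: one regularizes the problem, derives uniform-in-time $H_0^1$ bounds on the approximate solutions on an interval $[0,T(R)]$ from the mass and energy conservation laws, and passes to the (weak) limit to get existence. Uniqueness is then a separate, genuinely two-dimensional argument: since $H^1(\Omega)\not\hookrightarrow L^\infty(\Omega)$, your proposed Gronwall estimate on the difference of two $C([0,T];H_0^1)$ solutions does not close either; one needs the logarithmic (Trudinger/Brezis--Gallouet-type) inequality in the style of Vladimirov and Ogawa--Ozawa to control the $L^\infty$-type quantities that Gronwall requires. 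This division of labor --- existence by compactness, uniqueness by a logarithmic inequality --- is precisely what yields the statement while leaving the regularity of the flow map unresolved, which your approach cannot reproduce and, with currently known estimates on bounded domains, cannot replace.
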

However the uniform continuity of the flow map is not clear in this case. The conservation laws \eqref{eq:consevationmass} and \eqref{eq:consevationenergy} still hold. Moreover the maximal time of existence $T_{\max}^{H^1_0}(\psi_0) \in (0,+\infty)$ can still be defined and then by using \eqref{eq:consevationmass} the blow-up criterion takes the following form
\begin{equation}
T_{\max}^{H^1_0}(\psi_0) < +\infty \Rightarrow \|\nabla \psi(t,\cdot)\|_{L^2\Omega)} \to +\infty\ \text{as}\ t \to T_{\max}^{H^1_0}(\psi_0)^-.
\end{equation}

Blow-up can indeed occur. Following \cite{Kav87}, assume for instance that $\Omega$ is a star-shaped domain with respect to $0 \in \Omega$ and let $\psi_0 \in H^2(\Omega) \cap H_0^1(\Omega)$ such that $E(\psi_0) < 0$. Set $V(t) = \int_{\Omega} |x|^2 |\psi(t,x)|^2 dx$ then we have the following virial identity
\begin{equation}
\frac{1}{16} \frac{d^2}{dt^2} V(t) = E(\psi(t,\cdot)) - \frac{1}{4} \int_{\partial \Omega} |\nabla \psi \cdot n|^2 x \cdot n dx
\leq E(\psi_0) < 0
\end{equation}
where $n$ is the outer normal to $\partial \Omega$. Since $V(t)$ is a positive concave function, the solution blows up in finite time.

Blow-up solutions can be constructed from the ground state solitary wave. In $\R^d$, the ground state solitary wave $Q$ is the unique positive radially symmetric solution of
\begin{equation}
\label{eq:defQ}
- \Delta Q + Q = |Q|^{p-1} Q \ \text{in}\ \R^d,
\end{equation}
where
\begin{equation}
\label{eq:defp}
p = 1+ \frac{4}{d}.
\end{equation}
According to \cite{Kwo89} and \cite{Wei83}, this solution decays exponentially at infinity, 
\begin{equation}
\label{eq:expdecayQ}
\forall \alpha \in \N^d,\ \exists C_\alpha, D_\alpha >0,\ \forall x \in \R^d,\ |\partial^{\alpha} Q(x)| \leq C_\alpha e^{- D_\alpha |x|}.
\end{equation}
 In $\R^d$, the blow-up regime in the mass-critical case 
\begin{equation}
	\label{eq:L2CriticalNLSRd}
		\left\{
			\begin{array}{ll}
				 i \partial_t \psi + \Delta \psi = - |\psi|^{p-1} \psi  & \text{ in }  (0,T) \times \R^d, 
				\\
				\psi(0, \cdot) = \psi_0 & \text{ in } \R^d,
			\end{array}
		\right.
\end{equation}
is rather well-understood for $\psi_0\in H^1(\R^d)$. Weinstein \cite{Wei83} proved that initial data satisfying $\|\psi_0\|_{L^2(\R^d)}< \|Q\|_{L^2(\R^d)}$ leads to the global existence in time of solutions, combining a sharp interpolation estimate to a Gagliardo-Nirenberg inequality. This bound have been proven to be sharp by Merle in $\Sigma := \{ \psi_0 \in H^1(\R^d) \, | \,  x\psi_0 \in L^2(\R^d) \}$ using the pseudo-conformal transformation on the ground state $Q$ \cite{Mer93}. Indeed, there exists an initial data $\psi_0 \in \Sigma$ such that $\|\psi_0 \|_{L^2(\R^d)}= \|Q\|_{L^2(\R^d)}$ and such that the corresponding solution $\psi$ to \eqref{eq:L2CriticalNLSRd} blow-up as $\|\nabla \psi(t,.)\|_{L^2(\R^d)} \simeq 1/(T-t)$ as $t\rightarrow T^{-}$. The blow-up phenomenon of \eqref{eq:L2CriticalNLSRd} was further characterized \cite{MR03, MR04, MR05a, MR05b, MR06, Rap05} in the set 
\[
\mathcal{B}_{\alpha^*}=\left\{ \psi_0 \in H^1(\R^d)\, \left| \, \|Q\|_{L^2}^2 < \|\psi_0\|_{L^2}^2 < \|Q\|_{L^2}^2 + \alpha^* \right. \right\},
\]
for $\alpha^*>0$ sufficiently small and $E(\psi_0)<0$, assuming the so-called \textit{spectral property} in high dimensions. For initial conditions in this set, there exists $T=T(\psi_0)>0$ such that the solutions writing as 
\begin{equation}\label{eq:profilblowupmasscri}
\psi(x,t)=\dfrac{e^{i\gamma(t)}}{\lambda(t)}(Q+\epsilon)\left(\frac{x-x(t)}{\lambda(t)},t\right),
\end{equation}
with $\|\epsilon(.,t)\|_{H^1(\R^d)}\rightarrow 0$ as $\alpha^* \rightarrow 0$, blow up  at the rate, 
\begin{equation}\label{eq:loglog}
\| \nabla \psi(.,t)\|_{L^2(\R^d)} \simeq \dfrac{1}{\lambda(t)}  \simeq \sqrt{\dfrac{\log|\log(T-t)|}{T-t}} , \quad t\rightarrow T^-.
\end{equation}
For bounded domains $\Omega\subset \R^d$, two types of blow-up are found in the literature for the $L^2$ mass-critical case. If $d\leq 4$, a log-log type of blow-up (as \eqref{eq:loglog}) is exhibited in the set $\mathcal{B}_{\alpha^*}$ by Planchon and Rapha\"el \cite{PR07}.  The second type of blow-up is a result from Godet \cite[Theorem 1]{God11}, a generalization of \cite{BGT03} of a single point blow-up in dimension $2$ to the case of a finite number blowing-up points in dimension $2$ or $3$. It can also be considered as a generalization of the results from \cite{Mer90} established in $\R^d$ to the case of bounded domains. We only state it in $2$-d for our purpose.

\begin{tm}[\cite{BGT03,God11}]
\label{thm:mainresultgod}
Let $\Omega$ be a smooth bounded domain of $\R^2$. Let $n \geq 1$ and $x_1, \dots,x_n$ be $n$ distinct points in $\Omega$. Then, there exist $c>0$, $\lambda_0 >0$ such that for every $\lambda \geq \lambda_0$, there exist a time $T_{\lambda} \in (0, c \lambda^{-2})$ and a solution $\phi_{\lambda} \in C([0,T_\lambda);H^2(\Omega)\cap H^1_0(\Omega))$ to \eqref{eq:L2CriticalNLS}, which blow-up at the points $x_1, \dots, x_n$ in time $T_\lambda$. Moreover, the following properties hold.
\begin{enumerate}
\item The mass is conserved,
\begin{equation}
\label{eq:consmassL2philammbda}
\|\phi_\lambda(t, \cdot)\|_{L^2(\Omega)}= \sqrt{n} \|Q\|_{L^2(\R^2)},\qquad \forall t \in [0,T_\lambda).
\end{equation}
\item The blow-up occurs in the energy space $H_0^1(\Omega)$ with the following speed $$\|\nabla \phi_{\lambda}(t,\cdot)\|_{L^2(\Omega)} \underset{t \to T_\lambda^-}{\sim}\frac{\sqrt{n} \| \nabla Q\|_{L^2(\R^2)}}{\lambda (T_{\lambda}-t)}.$$
\item The function $\phi_\lambda$ decomposes as
\begin{equation}
\label{eq:decompositionphilambda}
 \phi_{\lambda}(t,x)=R_{\lambda}(t,x)  + r_{\lambda}(t,x), \qquad \forall (t,x) \in [0,T_\lambda) \times \Omega,
\end{equation}
with $R_{\lambda}, r_{\lambda} \in C([0,T_{\lambda});H^2(\Omega) \cap H^1_0(\Omega))$ satisfying
 \begin{equation}
 \label{eq:defRLambda}
R_{\lambda}(t,x) = \frac{1}{\lambda ^{} (T_{\lambda}-t)^{}} \sum_{k=1}^n e^{\frac{i(4-\lambda^2 |x-x_k|^2)}{4 \lambda ^2(T_\lambda-t)}} \varphi_k(x) Q \left(\frac{x-x_k}{\lambda (T_{\lambda} -t)}\right),\ \forall (t,x) \in [0,T_\lambda) \times \Omega,
\end{equation}
where $\varphi_1, \dots, \varphi_n \in C^{\infty}_0(\Omega;[0,1])$ have disjoint supports, verify $\varphi_k=1$ near $x_k$  and 
\begin{equation}
\label{eq:estimationrlambda}
\|r_\lambda(t,\cdot)\|_{H^2(\Omega)} \leq ce^{-\frac{\kappa}{\lambda(T_\lambda-t) }}, \qquad \forall t\in [0,T_\lambda)\qquad (\kappa>0). 
\end{equation}
\end{enumerate}
\end{tm}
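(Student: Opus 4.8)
The plan is to follow the pseudo-conformal construction of Merle, adapted to the bounded domain as in \cite{BGT03,God11}. The starting point is the explicit single-point blow-up solution in the whole plane: since $Q$ solves \eqref{eq:defQ} with $p=3$, the standing wave $e^{it}Q(x)$ solves \eqref{eq:L2CriticalNLSRd} on $\R^2$, and applying the pseudo-conformal symmetry of the mass-critical equation produces the explicit profile
\[
S(t,x)=\frac{1}{\lambda(T_\lambda-t)}\, e^{\frac{i(4-\lambda^2|x|^2)}{4\lambda^2(T_\lambda-t)}}\, Q\!\left(\frac{x}{\lambda(T_\lambda-t)}\right),
\]
which solves the focusing NLS on $\R^2$ and blows up at the origin as $t\to T_\lambda^-$ with $\|\nabla S(t,\cdot)\|_{L^2}\sim \|\nabla Q\|_{L^2}/(\lambda(T_\lambda-t))$. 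Translating this profile to each $x_k$, cutting it off with $\varphi_k$ and summing yields the candidate singular part $R_\lambda$ of \eqref{eq:defRLambda}; the compact support of the $\varphi_k$ enforces the Dirichlet condition, while the disjointness of their supports decouples the $n$ bubbles.

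Second, I would quantify the defect of $R_\lambda$ as an exact solution. Writing $F_\lambda := i\partial_t R_\lambda+\Delta R_\lambda+|R_\lambda|^2R_\lambda$, the bulk terms cancel because $S$ solves the equation exactly, so $F_\lambda$ is supported where $\nabla\varphi_k\neq 0$, that is, at a fixed positive distance from $x_k$. There the argument $(x-x_k)/(\lambda(T_\lambda-t))$ of $Q$ and of its derivatives has size $\gtrsim 1/(\lambda(T_\lambda-t))$, so the exponential decay \eqref{eq:expdecayQ} gives $\|F_\lambda(t,\cdot)\|_{H^2(\Omega)}\le c\,e^{-\kappa/(\lambda(T_\lambda-t))}$ for some $\kappa>0$. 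This exponential gain is the quantitative engine behind \eqref{eq:estimationrlambda}.

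Third, I would set $\phi_\lambda=R_\lambda+r_\lambda$ and solve for the remainder. The equation for $r_\lambda$ reads
\[
i\partial_t r_\lambda+\Delta r_\lambda=-\bigl(|R_\lambda+r_\lambda|^2(R_\lambda+r_\lambda)-|R_\lambda|^2R_\lambda\bigr)-F_\lambda,
\]
with Dirichlet data and the terminal condition $r_\lambda(T_\lambda,\cdot)=0$. I would construct $r_\lambda$ by solving backward from $T_\lambda$: approximate by solving the full problem \eqref{eq:L2CriticalNLS} backward from data $R_\lambda(T_\lambda-1/m,\cdot)$ on shrinking intervals, derive uniform-in-$m$ bounds in the weighted norm $\sup_t e^{\kappa/(\lambda(T_\lambda-t))}\|r_\lambda(t,\cdot)\|_{H^2}$ via the Duhamel formula together with the local theory of Theorem~\ref{lm:localwpH2H01}, and pass to the limit. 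Choosing $\lambda\ge\lambda_0$ large (hence $T_\lambda\le c\lambda^{-2}$ short) lets the contraction close, since the exponentially small source dominates the contribution of the large but tightly localized profile over the short life-span.

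Finally, the three stated properties follow. The mass satisfies $\|R_\lambda(t,\cdot)\|_{L^2}^2=n\|Q\|_{L^2(\R^2)}^2$ up to exponentially small corrections, which combined with the conservation law \eqref{eq:consevationmass} and the smallness of $r_\lambda$ yields \eqref{eq:consmassL2philammbda}; the gradient blow-up rate is read off from $S$ and the disjoint bubbles, with $r_\lambda$ negligible; and \eqref{eq:estimationrlambda} is exactly the weighted bound on $r_\lambda$. I expect the main obstacle to be this third step: on a bounded domain one lacks the clean global Strichartz estimates available on $\R^2$, so the backward construction and the closing of the fixed point must rely on energy estimates tailored to the short interval and on the exponential localization of both $F_\lambda$ and $R_\lambda$ away from $\partial\Omega$, which is precisely the delicate point handled in \cite{BGT03,God11}.
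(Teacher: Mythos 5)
This statement is imported from \cite{BGT03,God11}; the paper itself offers no proof of it, so the only meaningful comparison is with the construction in those references. Your sketch follows essentially that same route — the truncated pseudo-conformal profile as the singular part $R_\lambda$, the exponentially small truncation error coming from \eqref{eq:expdecayQ} since the cutoff errors live at fixed distance from the $x_k$, and the backward-in-time construction of the remainder with terminal data zero, closed for $\lambda \geq \lambda_0$ large by energy estimates in the weighted norm $\sup_t e^{\kappa/(\lambda(T_\lambda-t))}\|\cdot\|_{H^2(\Omega)}$ rather than Strichartz estimates — so there is nothing to flag.
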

For $n=1$, i.e. considering the case of a single blow-up point, the blow-up solutions of \Cref{thm:mainresultgod} in the energy space $H_0^1(\Omega)$ are actually of minimal $L^2$-norm as noticed in \cite{PR07}. This is a consequence of the conservation laws \eqref{eq:consevationmass}, \eqref{eq:consevationenergy} and the sharp Gagliardo-Nirenberg inequality stated here in dimension $d=2$, see \cite{Wei83}, 
\begin{equation}
\displaystyle E(\psi)  \geq \frac{1}{2} \left(\int_{\Omega} |\nabla \psi|^2\right)\left(1 - \left( \dfrac{\|\psi\|_{L^2(\Omega)}}{\|Q\|_{L^2(\R^2)}} \right)^{2} \right).
\end{equation}
Bounds on the blow-up rate for minimal mass blowing-up solutions are studied in \cite{Ban04}. Furthermore, by using Brezis-Gallouet's inequality (see \cite{BG80}), the global well-posedness in the energy space $H_0^1(\Omega)$ implies the global well-posedness in $H^2(\Omega) \cap H_0^1(\Omega)$. We then have the following result.
\begin{tm}[\cite{PR07}, \cite{BG80}]
\label{tm:globalwp}
The following two properties hold.
\begin{enumerate}
\item  For every initial data $\psi_0 \in H_0^1(\Omega)$ satisfying $\|\psi_0\|_{L^2(\Omega)} < \|Q\|_{L^2(\R^2)}$, \eqref{eq:L2CriticalNLS} is globally well-posed in the energy space $H_0^1(\Omega)$.
\item For every initial data $\psi_0 \in H^2(\Omega) \cap H_0^1(\Omega)$ satisfying $\|\psi_0\|_{L^2(\Omega)} < \|Q\|_{L^2(\R^2)}$, \eqref{eq:L2CriticalNLS} is globally well-posed in the functional space $H^2(\Omega) \cap H_0^1(\Omega)$.
\end{enumerate}
\end{tm}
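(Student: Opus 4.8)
The plan is to treat the two items separately, both through the classical scheme ``a priori bound on the interval of existence $+$ blow-up criterion'', with item (1) relying only on the conservation laws and the sharp Gagliardo-Nirenberg inequality, and item (2) additionally exploiting Brezis-Gallouet's logarithmic interpolation inequality. In both cases uniqueness and the local continuity statements come for free from \Cref{lm:localwpH01} and \Cref{lm:localwpH2H01}, so the whole content is a time-uniform bound that rules out finite-time blow-up.

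For item (1) I would fix $\psi_0\in H^1_0(\Omega)$ with $\|\psi_0\|_{L^2(\Omega)}<\|Q\|_{L^2(\R^2)}$ and set $\eta=1-\big(\|\psi_0\|_{L^2(\Omega)}/\|Q\|_{L^2(\R^2)}\big)^2>0$. By the conservation of mass \eqref{eq:consevationmass}, the factor $1-\big(\|\psi(t,\cdot)\|_{L^2}/\|Q\|_{L^2}\big)^2$ stays equal to $\eta$ along the flow, so applying the sharp Gagliardo-Nirenberg inequality to $\psi(t,\cdot)$ gives $E(\psi(t,\cdot))\ge\tfrac12\,\eta\,\|\nabla\psi(t,\cdot)\|_{L^2}^2$. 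Combined with the conservation of energy \eqref{eq:consevationenergy}, this yields the time-uniform bound $\|\nabla\psi(t,\cdot)\|_{L^2}^2\le 2E(\psi_0)/\eta$ on the whole interval of existence (the right-hand side being finite and nonnegative, since the same inequality forces $E(\psi_0)\ge 0$ in this subcritical-mass regime). Were $T_{\max}^{H^1_0}(\psi_0)$ finite, the blow-up criterion would force $\|\nabla\psi(t,\cdot)\|_{L^2}\to+\infty$, a contradiction; hence $T_{\max}^{H^1_0}(\psi_0)=+\infty$.

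For item (2), take $\psi_0\in H^2(\Omega)\cap H^1_0(\Omega)$ with subcritical mass; item (1) already provides a uniform bound $\|\psi(t,\cdot)\|_{H^1_0(\Omega)}\le M$, and it remains to propagate the $H^2$ regularity, after which the $H^2$ blow-up criterion closes the argument. I would differentiate the equation in time and run an $L^2$ estimate on $w:=\partial_t\psi$, which solves $i\partial_t w+\Delta w=-(2|\psi|^2 w+\psi^2\bar w)$ with $w=0$ on $\partial\Omega$ (as $\psi\equiv 0$ there). Multiplying by $\bar w$, integrating, and taking imaginary parts, the Hermitian contribution $2\int_\Omega|\psi|^2|w|^2$ is real and drops out, leaving $\tfrac{d}{dt}\|w\|_{L^2}^2\le 2\|\psi\|_{L^\infty}^2\|w\|_{L^2}^2$. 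The structural point is that the $H^2$ norm is in turn controlled by $\|w\|_{L^2}$: from $\Delta\psi=-iw-|\psi|^2\psi$, elliptic regularity for the Dirichlet Laplacian together with the planar embedding $H^1_0\hookrightarrow L^6$ and the bound $M$ give $\|\psi(t,\cdot)\|_{H^2(\Omega)}\le C(1+\|w(t,\cdot)\|_{L^2})$. Feeding this into Brezis-Gallouet's inequality, which in a bounded planar domain reads $\|\psi\|_{L^\infty}\le C\|\psi\|_{H^1_0}\big(1+\sqrt{\log(1+\|\psi\|_{H^2}/\|\psi\|_{H^1_0})}\big)$ (the denominator being bounded below by $\|\psi_0\|_{L^2}>0$ via conservation of mass), one obtains $\|\psi(t,\cdot)\|_{L^\infty}^2\le C\big(1+\log(2+\|w(t,\cdot)\|_{L^2})\big)$. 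Writing $y(t)=\|w(t,\cdot)\|_{L^2}^2$, the energy estimate becomes the logarithmic Gronwall inequality $y'(t)\le C\big(1+\log(2+y(t))\big)\,y(t)$, which is precisely the borderline that does \emph{not} produce finite-time blow-up: since $\int^{\infty}\tfrac{dy}{y\log(2+y)}=+\infty$ (primitive $\sim\log\log y$), it only yields a double-exponential but everywhere-finite bound of the form $y(t)\le\exp(\exp(Ct+C'))$, so $\|\psi(t,\cdot)\|_{H^2}$ stays finite on every bounded interval and $T_{\max}(\psi_0)=+\infty$.

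I expect the main obstacle to be the rigorous justification of the time-differentiated estimate, since at the $H^2$ level $w=\partial_t\psi$ is a priori only an $L^2$ object: one should either perform the computation on smooth approximating data and pass to the limit using the local theory of \Cref{lm:localwpH2H01}, or else estimate $\|\Delta\psi\|_{L^2}$ directly. The logarithmic Gronwall step, though clean, also requires care to check that all constants depend only on $M$ (hence are uniform in time), which is exactly what upgrades the local $H^2$ solution to a global one.
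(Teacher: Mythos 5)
Your proposal is correct and follows exactly the route the paper intends: the paper does not prove this theorem but cites \cite{PR07} and \cite{BG80}, explaining just before the statement that item (1) is a consequence of the conservation laws \eqref{eq:consevationmass}--\eqref{eq:consevationenergy} together with the sharp Gagliardo--Nirenberg inequality, and that item (2) follows from item (1) via Brezis--Gallouet's inequality. Your write-up (the $\eta$-coercivity bound on $\|\nabla\psi\|_{L^2}$ for item (1), and the time-differentiated $L^2$ estimate combined with elliptic regularity and the logarithmic Gronwall argument for item (2), with the honest caveat about regularizing to justify differentiating in time) is precisely the standard implementation of that cited argument.
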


The goal of this article is to see if, by acting locally on the equation \eqref{eq:L2CriticalNLS}, one can prevent the blow-up from happening for the particular blow-up profiles of \Cref{thm:mainresultgod}. More precisely, the controlled nonlinear Schrödinger equation that we consider in the following is
\begin{equation}
	\label{eq:L2CriticalNLSControl}
		\left\{
			\begin{array}{ll}
				 i \partial_t \psi + \Delta \psi = - |\psi|^{2} \psi  + v \mathds{1}_{\omega} & \text{ in }  (0,T) \times \Omega, 
				\\
				\psi = 0 & \text{ on } (0,T)\times \partial \Omega, 
				\\
				\psi(0, \cdot) = \psi_0 & \text{ in } \Omega.
			\end{array}
		\right.
\end{equation}
In \eqref{eq:L2CriticalNLSControl}, at time $t \in [0,T]$, $\psi(t,\cdot) : \Omega \to \C$ is the state and $v(t,\cdot) : \omega \to \C$ is the control.

\subsection{Control of the mass-critical focusing nonlinear Schrödinger equation}

Our first main result shows that, with the help of a localized feedback control whose support contains the blow-up points, one can prevent the blow-up from happening and then build a global solution to the controlled problem \eqref{eq:L2CriticalNLSControl}.

\begin{tm}\label{tm:mainresult1}
Let $\Omega$ be a smooth bounded domain in $\R^2$. Let $n \geq 1$ and $x_1, \dots, x_n$ be $n$ distinct points in $\Omega$. Let $\omega \subset \Omega$ be an open set such that $x_1, \dots, x_n \in \omega$. Then, there exist $\delta>0$, $\lambda_0 >0$ such that for every $\lambda \geq \lambda_0$, there exists a feedback control $v_{\lambda} \in L^{\infty}([0,\infty);H^2(\Omega) \cap H_0^1(\Omega))$ supported in $(0,T_{\lambda}) \times \omega$ and an associated global solution $\psi \in C([0,+\infty);H^2(\Omega) \cap H_0^1(\Omega))$ of \eqref{eq:L2CriticalNLSControl} starting from $\psi(0, \cdot) = \phi_{\lambda}(0,\cdot) + w_0$, where $\phi_\lambda$ is the blow-up profile from Theorem \ref{thm:mainresultgod} and $\|w_0\|_{H^2\cap H^1_0 (\Omega)} < \delta$. 
\end{tm}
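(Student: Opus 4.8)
The starting point is the precise structure of the blow-up profile in \Cref{thm:mainresultgod}. The singular part $R_\lambda$ is built from the cut-offs $\varphi_k$, which I would choose (shrinking their supports if needed, which is harmless since $x_k\in\omega$ and $\omega$ is open) so that $\mathrm{supp}\,\varphi_k\subset\omega$ for every $k$. With this choice $R_\lambda(t,\cdot)$ is supported in $\omega$ for all $t\in[0,T_\lambda)$, hence by \eqref{eq:decompositionphilambda} one has $\phi_\lambda=r_\lambda$ on $\Omega\setminus\omega$, where by \eqref{eq:estimationrlambda} the remainder is exponentially small in $H^2$. Thus all of the singular, large-amplitude behaviour of $\phi_\lambda$ is confined to the control region, which is exactly what makes a localized feedback effective: the destabilizing terms in the linearization around $\phi_\lambda$ all live in $\omega$, where control acts.

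The plan is to steer the solution along an explicit de-singularized reference trajectory $\Phi$ and to read off the feedback from the equation $\Phi$ fails to satisfy. I would build $\Phi$ on $[0,+\infty)$ in three stages. On $[0,T_\lambda-\tau]$ set $\Phi=\phi_\lambda$, so $\Phi$ solves the uncontrolled equation and no control is needed. On the short slab $[T_\lambda-\tau,T_\lambda]$, freeze the concentration scale at its value $\lambda\tau$ reached at $t=T_\lambda-\tau$ and multiply the now fixed, bounded bubble by a smooth amplitude $a(t)$ decreasing from $1$ to some $a_f<1/\sqrt{n}$, while letting the exponentially small exterior part evolve freely so that the construction remains free on $\Omega\setminus\omega$. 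Finally, on $[T_\lambda,+\infty)$ let $\Phi$ be the free solution issued from $\Phi(T_\lambda)$, which has $L^2$-mass $a_f\sqrt n\,\|Q\|_{L^2(\R^2)}<\|Q\|_{L^2(\R^2)}$ and is therefore global by \Cref{tm:globalwp}. Setting $F:=i\partial_t\Phi+\Delta\Phi+|\Phi|^2\Phi$, a direct computation shows that $F=0$ on $[0,T_\lambda-\tau]\cup[T_\lambda,+\infty)$, that $F$ is supported in $\omega$ (since $\Phi$ solves the free equation on $\Omega\setminus\omega$), and — crucially — that $F$ is bounded in $L^\infty([0,+\infty);H^2\cap H^1_0)$, because freezing the scale removes the blow-up of $\partial_t R_\lambda$ and $\Delta R_\lambda$.

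To absorb the perturbation $w_0$, I would look for the controlled solution in the form $\psi=\Phi+w$ and prescribe the nonlinear feedback
\[
v\,\mathds{1}_\omega=F+\big(|\psi|^2\psi-|\Phi|^2\Phi\big)\mathds{1}_\omega,
\]
which is supported in $(0,T_\lambda)\times\omega$ and vanishes for $t\ge T_\lambda$. With this choice $w$ solves $i\partial_t w+\Delta w=-\big(|\Phi+w|^2(\Phi+w)-|\Phi|^2\Phi\big)\mathds{1}_{\Omega\setminus\omega}$ with $w(0)=w_0$, and on $\Omega\setminus\omega$ the coefficient $\Phi$ is exponentially small, so the right-hand side is a genuinely cubic, small perturbation of the free flow. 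A contraction argument in $C([0,T_\lambda];H^2\cap H^1_0)$ based on \Cref{lm:localwpH2H01} then keeps $\|w\|$ of order $\delta$ on $[0,T_\lambda]$ for $\delta$ small. In particular $\psi(T_\lambda)=\Phi(T_\lambda)+w(T_\lambda)$ still has $L^2$-mass below $\|Q\|_{L^2(\R^2)}$; once the control is switched off at $T_\lambda$, \Cref{tm:globalwp} extends $\psi$ globally in $H^2\cap H^1_0$.

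The crux — and the step I expect to be the main obstacle — is the second stage: one must de-singularize the profile with a control that is \emph{simultaneously} supported in $\omega$ and bounded in $H^2\cap H^1_0$. Forcing the bubble to vanish at its natural blow-up rate would require an unbounded control through $\partial_t R_\lambda$ and $\Delta R_\lambda$, so the scale must be frozen first and only then may the amplitude be drained, on a fixed time slab, at finite cost. Quantifying this trade-off between the blow-up speed $1/(\lambda(T_\lambda-t))$ and the control size, and propagating the $H^2$ estimates on $w$ uniformly in the presence of the large (but finite) coefficients created during the draining stage, is where the real work lies; everything else reduces to the conservation laws and the subcritical global theory of \Cref{tm:globalwp}.
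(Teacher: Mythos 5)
Your proposal is correct in substance and shares the paper's overall architecture: track the profile with a nonlinearity-cancelling feedback localized by a cut-off, de-singularize it inside $\omega$ strictly before the blow-up time, absorb the perturbation $w_0$ by a contraction argument that only sees the profile outside a neighborhood of the blow-up points (where it is exponentially small, \Cref{lem:R}), and conclude with \Cref{tm:globalwp} once the mass is subcritical. Where you genuinely differ is the de-singularization stage. The paper (Steps 2--3 of \Cref{pr:rapidstab}) keeps the profile's own singular time-dependence and forces the ansatz $\psi = e^{-\mu(t-t_1)}\phi_\lambda + \tilde w$, the smallness at $t_2$ coming from the exponential factor beating the polynomial growth of $\|\phi_\lambda\|_{H^2}$; this requires the delicate tuning $t_1 = T_\lambda(1-2T_\lambda)$, $t_2 = T_\lambda(1-T_\lambda)$, $\mu(t_2-t_1)=1/(\lambda T_\lambda^2)$ and the choices of $M_1, M_2$ in \Cref{cl:firstfixedpoint} and \Cref{cl:secondfixedpoint}. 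You instead freeze the bubble's scale at $t=T_\lambda-\tau$ and drain its amplitude to $a_f<1/\sqrt n$, so subcriticality of the final mass holds by construction; the price is building and estimating the auxiliary trajectory $\Phi$ and the source $F$. This is essentially a feedback-robustified version of the paper's own open-loop construction of \Cref{sec:openloopcontrol} (\Cref{tm:mainresult1openloop}), where the bubble is killed via $\psi=(1-\theta\chi)\phi_\lambda$; your hybrid gets the robustness to $w_0$ that the open-loop construction lacks, while the paper's exponential-damping ansatz additionally delivers $\|\psi(t_2,\cdot)\|_{H^2}\le\varepsilon$ arbitrarily small (not merely subcritical mass), which is what is reused for \Cref{tm:mainresult2}.

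Three points need repair, all fixable. First, the truncation by the sharp indicator $\mathds{1}_\omega$ in your feedback is not admissible: the control must be $H^2(\Omega)\cap H^1_0(\Omega)$-valued, and the perturbation equation for $w$ would then carry a forcing $(|\psi|^2\psi-|\Phi|^2\Phi)\mathds{1}_{\Omega\setminus\omega}\notin H^2(\Omega)$, so your contraction in $C([0,T_\lambda];H^2\cap H^1_0)$ would not even be defined; replace $\mathds{1}_\omega$ by a smooth cut-off $\chi$ as in \eqref{eq:defchiBis}, after which the $w$-equation carries $(1-\chi)$ and the fixed point closes exactly as in the paper. Second, \Cref{thm:mainresultgod} as stated does not let you choose the supports of the $\varphi_k$; you must justify the re-decomposition, e.g., replace $\varphi_k$ by $\tilde\varphi_k$ supported in $\omega$ and absorb the difference into $r_\lambda$, which is legitimate because $(\varphi_k-\tilde\varphi_k)\,Q\bigl(\tfrac{\cdot-x_k}{\lambda(T_\lambda-t)}\bigr)/(\lambda(T_\lambda-t))$ is exponentially small in $H^2$ and so compatible with \eqref{eq:estimationrlambda}; alternatively drop the exact-support claim and work, as the paper does, only with exponential smallness outside $\omega_1$. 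Third, your feedback does not ``vanish for $t\ge T_\lambda$'': the term $\chi(|\psi|^2\psi-|\Phi|^2\Phi)$ is nonzero as long as $w\neq 0$. You correct this at the end by switching the control off at $t=T_\lambda$ and invoking \Cref{tm:globalwp} for the free equation with data $\psi(T_\lambda,\cdot)$, which is the right move (and what the paper does after $t_2$), but the earlier claim should be removed.
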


Our second main result shows that one can even achieve null controllability for such blow-up profiles, assuming a control condition that we present now. The linearized control system of \eqref{eq:L2CriticalNLSControl} around the equilibrium $(\psi, v) = (0,0)$ is given by the linear Schrödinger equation
\begin{equation}
\label{eq:linschro}
\begin{cases}
i\partial_t \psi  + \Delta \psi  = v \mathds{1}_\omega, \quad  & \mathrm{in}\ (0,T) \times \Omega,  \\
\psi=0, \quad & \mathrm{on}\ (0,T) \times \partial \Omega, \\
\psi(0,\cdot)=\psi_0, \quad & \mathrm{in}\ \Omega.
\end{cases}
\end{equation}

Let us introduce the following hypothesis: 
\begin{ass}
\label{ass:linschroControllable}
The linear Schrödinger equation \eqref{eq:linschro} is small-time null-controllable (STNC). That is, for every $T>0$ and for every $\psi_0 \in L^2(\Omega)$, there exists $v \in L^2([0,T];L^2(\omega))$ such that the solution $\psi \in C([0,T];L^2(\Omega))$ of \eqref{eq:linschro} satisfies $\psi(T, \cdot) = 0$.
\end{ass}
Controllability for the linear Schrödinger equation \eqref{eq:linschro} began to be extensively investigated in the 1990s. Up to now, a necessary and sufficient geometrical condition on $\omega$, depending on $\Omega$, for the (STNC) of \eqref{eq:linschro} to hold has not been identified. Let us present some of the well-known results in this direction (see \cite{Lau14} for a survey up to 2014). For a general smooth bounded domain $\Omega \subset \R^d$, $d \geq 1$, \cite{Leb92} guarantees that the so-called Geometric Control Condition (GCC) is sufficient for the (STNC) of the Schrödinger equation to hold. The proof of this result is based on microlocal analysis. (GCC) can be roughly formulated as follows: the subdomain $\omega$ is said to
satisfy (GCC) in time $T >0$ if and only if all rays of geometric optics propagating inside the domain $\Omega$ and bouncing off the boundary reach the control set $\omega$ non-diffractively in time less than $T$. This condition is not necessary for some specific domains. For instance, for $\Omega = (0,l_1) \times \dots \times (0,l_d)$, any nonempty open subset $\omega$ is sufficient for the STNC of \eqref{eq:linschro}; see \cite{Jaf90} or \cite{KL05} for a proof using Ingham's estimates, or \cite{BBZ13}, \cite{AM14} for a proof using semiclassical measures in the case of the torus. For the unit disk $\mathbb D \subset \R^2$, explicit eigenfunctions concentrate near the boundary, so one can prove that exact controllability holds if and only if $\overline{\omega}$ contains a (small) part of the boundary $\partial \mathbb D$ (see \cite{ALM16}).\\

Under Assumption \ref{ass:linschroControllable}, we are able to prove small-time local null controllability in $H^2(\Omega) \cap H_0^1(\Omega)$ of \eqref{eq:L2CriticalNLSControl} (see \Cref{thm:localnullcontrollabilityNLS} below), leading to our second main result.
\begin{tm}
\label{tm:mainresult2}
Let $\Omega$ be a smooth bounded domain in $\R^2$. Let $n \geq 1$ and $x_1, \dots, x_n$ be $n$ distinct points in $\Omega$. Let $\omega \subset \Omega$ be an open set such that $x_1, \dots, x_n \in \omega$ and such that Assumption \ref{ass:linschroControllable} holds. Let $T>0$. Then, there exist $\delta>0$, $\lambda_0 >0$ such that for every $\lambda \geq \lambda_0$, there exists a control $v_\lambda \in L^{\infty}([0,T];H^2(\Omega) \cap H_0^1(\Omega))$ supported in $(0,T) \times \omega$ and an associated solution $\psi \in C([0,T];H^2(\Omega) \cap H_0^1(\Omega))$ of \eqref{eq:L2CriticalNLSControl}, starting from $\psi(0, \cdot) = \phi_{\lambda}(0,\cdot)+w_0$, where $\phi_\lambda$ is the blow-up profile of Theorem \ref{thm:mainresultgod} and $\|w_0\|_{H^2\cap H^1_0(\Omega)}<\delta$, satisfying 
\begin{equation}
\label{eq:psizero}
\psi(T,\cdot) = 0.
\end{equation}
\end{tm}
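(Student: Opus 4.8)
The plan is to split the control horizon $[0,T]$ at the blow-up time $T_\lambda$ and to handle the two resulting phases by two entirely different mechanisms: first the blow-up-avoidance construction of Theorem~\ref{tm:mainresult1}, which uses no controllability of the linear equation, and then a genuine local null-controllability argument near the zero state, which is the only place where Assumption~\ref{ass:linschroControllable} is needed. To begin, I would fix $\lambda_0$ so large that $c\lambda_0^{-2}<T/2$; then for every $\lambda\ge\lambda_0$ one has $T_\lambda<T/2$, so that the second interval $[T_\lambda,T]$ has length bounded below by $T/2$, uniformly in $\lambda$.

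\emph{Phase 1, on $[0,T_\lambda]$.} Here I would run the feedback control $v_\lambda^{(1)}$ of Theorem~\ref{tm:mainresult1}, starting from $\phi_\lambda(0,\cdot)+w_0$ with $\|w_0\|_{H^2\cap H^1_0}<\delta$, and evolve up to time $T_\lambda$, recording the realized open-loop control along the trajectory. The point I must extract from the construction itself (not merely from its statement) is quantitative: the control absorbs, inside $\omega\supset\{x_1,\dots,x_n\}$, the concentrated singular part $R_\lambda$, while the remainder $r_\lambda$ is exponentially small near $T_\lambda$ by \eqref{eq:estimationrlambda}. Consequently the terminal state $\psi(T_\lambda,\cdot)$ is small in $H^2(\Omega)\cap H^1_0(\Omega)$, of size controlled by $\delta$ together with a quantity tending to $0$ as $\lambda\to\infty$; in particular its mass becomes subcritical and its full $H^2\cap H^1_0$ norm can be made smaller than any prescribed threshold by taking $\delta$ small and $\lambda$ large.

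\emph{Phase 2, on $[T_\lambda,T]$.} I would then invoke the small-time local null-controllability of \eqref{eq:L2CriticalNLSControl} near $0$ (Theorem~\ref{thm:localnullcontrollabilityNLS}): under Assumption~\ref{ass:linschroControllable}, for the time horizon $T-T_\lambda\ge T/2$ there is a threshold $\delta_{\mathrm{loc}}>0$ such that any datum with $H^2\cap H^1_0$ norm below $\delta_{\mathrm{loc}}$ can be steered to $0$ by a control $v_\lambda^{(2)}\in L^\infty([T_\lambda,T];H^2\cap H^1_0)$ supported in $\omega$. Choosing $\delta$ and $\lambda_0$ in Phase 1 so that $\|\psi(T_\lambda,\cdot)\|_{H^2\cap H^1_0}<\delta_{\mathrm{loc}}$, this drives the state to $\psi(T,\cdot)=0$. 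Concatenating, $v_\lambda:=v_\lambda^{(1)}\mathds{1}_{(0,T_\lambda)}+v_\lambda^{(2)}\mathds{1}_{(T_\lambda,T)}$ is supported in $(0,T)\times\omega$, lies in $L^\infty([0,T];H^2\cap H^1_0)$, and the associated solution is continuous across $t=T_\lambda$ and satisfies \eqref{eq:psizero}.

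The hard part is Phase 2, i.e. proving the nonlinear local null-controllability, which is why Assumption~\ref{ass:linschroControllable} enters only there. I would linearize \eqref{eq:L2CriticalNLSControl} around $0$, recovering exactly \eqref{eq:linschro}, which is small-time null-controllable by Assumption~\ref{ass:linschroControllable}; I would then upgrade this $L^2$-controllability to the $H^2\cap H^1_0$ level and absorb the cubic nonlinearity $|\psi|^2\psi$ via a fixed-point (source-term) argument, using the embedding $H^2(\Omega)\hookrightarrow L^\infty(\Omega)$ to control the nonlinear terms for small data. The two genuine technical obstacles are therefore (i) this regularity upgrade together with the closure of the fixed point, and (ii) reading off from the proof of Theorem~\ref{tm:mainresult1} the quantitative smallness of the Phase 1 terminal state, without which the small-data local theorem could not be applied.
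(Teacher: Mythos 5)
Your two-phase strategy --- feedback stabilization to prevent the blow-up, then local null-controllability near $0$, with Assumption \ref{ass:linschroControllable} used only in the second phase --- is exactly the paper's strategy, and your Phase 2 outline (linearize to \eqref{eq:linschro}, upgrade to $H^2\cap H^1_0$, absorb the cubic term by a small-data fixed point) is precisely how Theorem \ref{thm:localnullcontrollabilityNLS} is proved in the appendix. The genuine gap is in Phase 1, and it comes from where you switch phases. You switch at the blow-up time $T_\lambda$ and assert that $\psi(T_\lambda,\cdot)$ is small in $H^2\cap H^1_0$ because ``the control absorbs the concentrated singular part $R_\lambda$'' while $r_\lambda$ is exponentially small. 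That is not the mechanism, nor could it be run up to $T_\lambda$: the stabilizing feedback \eqref{eq:defcontrolfeedback} produces the ansatz $\psi = e^{-\mu(t-t_1)}\phi_\lambda + \tilde w$, and for a fixed damping rate $\mu$ the damped profile $e^{-\mu(t-t_1)}\|\phi_\lambda(t,\cdot)\|_{H^2(\Omega)}$ tends to $+\infty$ as $t\to T_\lambda^-$, since by Theorem \ref{thm:mainresultgod} the profile blows up like $1/(\lambda(T_\lambda-t))$ while the damping factor stays bounded below by $e^{-\mu(T_\lambda-t_1)}>0$. This is exactly why the paper's rapid stabilization (Proposition \ref{pr:rapidstab}) stops at the carefully chosen time $t_2=T_\lambda(1-T_\lambda)<T_\lambda$, with $\mu(t_2-t_1)=1/(\lambda(T_\lambda-t_2))$ calibrated so that the damping beats the profile's growth exactly at $t_2$. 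The smallness $\|\psi(t_2,\cdot)\|_{H^2\cap H^1_0(\Omega)}\le\varepsilon$ is obtained there; in the construction of Theorem \ref{tm:mainresult1} the control is then switched off, so on $(t_2,T_\lambda)$ the solution evolves freely, and neither the statement nor the proof of that theorem tells you that $\psi(T_\lambda,\cdot)$ is still small in $H^2$.

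The gap is fixable in two ways. The clean fix --- the paper's --- is to start the null-control phase at $t_2$ rather than at $T_\lambda$: apply Proposition \ref{pr:rapidstab} with $\varepsilon=\delta_{T/2}$, the radius of local null-controllability at time $T/2$ furnished by Theorem \ref{thm:localnullcontrollabilityNLS}, note that $t_2<T_\lambda<T/2$ for $\lambda$ large, and control to zero on $(t_2,T)$, whose length exceeds $T/2$. Alternatively, keeping your splitting at $T_\lambda$, you must add the argument that the free nonlinear flow on $(t_2,T_\lambda)$ --- an interval of length $T_\lambda-t_2=T_\lambda^2\le c^2\lambda^{-4}$ --- propagates smallness; this does hold for $\lambda$ large by the local well-posedness and uniform continuity of the flow map (Theorem \ref{lm:localwpH2H01}), but it is an extra step your proposal does not supply, and it sits exactly at the place you flagged as obstacle (ii) without resolving it.
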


\subsection{Comments and generalizations}

The following remarks are worth noting.
\begin{itemize}
\item \textbf{Regularity of the controlled trajectory.} Note that the regularity of the control $v$ in both \Cref{tm:mainresult1} and \Cref{tm:mainresult2} does not immediately imply that the associated solution $\psi$ belongs to $C([0,\infty);H^2(\Omega) \cap H_0^1(\Omega))$ in the first case or to $C([0,T];H^2(\Omega) \cap H_0^1(\Omega))$ in the second case. This regularity for the trajectory will, however, follow from our construction.
\item \textbf{Robustness with respect to perturbation of the feedback control.} Let us emphasize that the control $v_{\lambda} \in L^{2}([0,\infty);H^2(\Omega) \cap H_0^1(\Omega))$ supported in $(0,T_{\lambda}) \times \omega$ of Theorem \ref{tm:mainresult1} is a piecewise continuous nonlinear closed-loop control. Such a control strategy is relevant for practical applications and is robust with respect to perturbations, which is why we focus on this case. The design of a simple but a priori not stable open-loop control and an associated global solution is performed in Section \ref{sec:openloopcontrol} below. We underline here that one cannot extend the conclusions of Theorem \ref{tm:mainresult1} and Theorem \ref{tm:mainresult2} from the initial data $\phi_{\lambda}(0,\cdot)$ to the initial data $\phi_{\lambda}(0,\cdot) + w_0$ with $w_0 \neq 0$ using a simple perturbation argument, such as Gronwall's inequality. Indeed, these perturbation arguments do not guarantee that the solution starting from a perturbation of $\phi_{\lambda}(0,\cdot)$ remains close to $\phi_{\lambda}(t,\cdot)$ for $t > 0$. The very fact that the time of blow-up for perturbed solutions is the same as that of $\phi_{\lambda}(t,\cdot)$ is also not a simple consequence of Theorem \ref{thm:mainresultgod}. Last but not least, we finally highlight that the size $\delta$ of the allowed perturbations of the blow-up profiles does not depend on $\lambda$. By looking at the details of the proof of Proposition \ref{pr:rapidstab} below, one could notice that the parameter $\delta$ in Theorem \ref{tm:mainresult1}, respectively in Theorem \ref{tm:mainresult2}, can be chosen of the following form $\delta = c \|Q\|_{L^2(\R^2)}$, respectively $\delta = c \delta_{T/2}$, where $c = 1/16$ and $\delta_{T/2}$ is the radius of the ball centered at $0$ in $H^2(\Omega) \cap H_0^1(\Omega)$ where the local null-controllability of \eqref{eq:L2CriticalNLSControl} holds at time $T/2$. We could probably increase the value of the constant $c > 0$, but we do not investigate this issue.

\item \textbf{Blow-up versus localized control.} To the best of our knowledge, \Cref{tm:mainresult1} is the first result demonstrating the possibility of preventing blow-up in a nonlinear Schrödinger equation using localized control. From \eqref{eq:consmassL2philammbda}, it is clear that $\|\psi(0,\cdot)\|_{L^2(\Omega)} = \sqrt{n}\|Q\|_{L^2(\R^2)}$ is not small, meaning \Cref{tm:mainresult2} cannot be directly derived from a local null-controllability result. Instead, we focus on the specific knowledge of the blow-up profiles and use control regions that must include the blow-up points. Notice that even for semilinear heat equations with power-like nonlinearities, where blow-up solutions at prescribed points can also be constructed \cite{Mer92}, establishing such a property remains an open problem. Furthermore, our results are in stark contrast to \cite{LZ22}, where the authors showed that a feedback control acting on $\omega$ could induce a solution of the heat equation to blow up at time $T$ at a unique point $a \in \omega$. Similarly, our results differ significantly from \cite{FCZ00}, in the context of semilinear heat equations with weakly superlinear nonlinearities, where the authors were able to prevent blow-up and even drive the solution to zero for arbitrary times, with nonempty open sets control regions, and for any initial data. Whether such a property can be proven or disproven for nonlinear Schrödinger equations remains entirely open. A related result to \Cref{tm:mainresult2} can be found in the recent work of the first author in \cite{Gag23}, where local controllability around ground state solitary waves with different scalings is achieved using a whole boundary control.
\item \textbf{Examples of geometrical control assumptions.} When $\Omega = (0,l_1) \times (0,l_2)$ is a rectangle, any nonempty open subset $\omega$ leads to the STNC of \eqref{eq:linschro}, so \Cref{tm:mainresult2} holds assuming only that $\omega$ contain the blow-up points of the profile $\phi_{\lambda}$. On the other hand, when $\Omega = \mathbb{D}$ the unit disk, \Cref{tm:mainresult2}  holds assuming that $\omega$ contain the blow-up points and $\overline{\omega} \cap \partial D \neq \emptyset$. When $\Omega$ is an arbitrary smooth bounded $2$-d domain, \Cref{tm:mainresult2} holds assuming that GCC is satisfied. See the previous discussion after \Cref{ass:linschroControllable}.
\item \textbf{Extensions.} The results of \Cref{tm:mainresult1} and \Cref{tm:mainresult2} can be extended without significant technical challenges to the 1-dimensional case, where the $L^2$-critical focusing nonlinearity in \eqref{eq:L2CriticalNLSRd} is considered with $p$ as defined in \eqref{eq:defp}, i.e., $p=5$. In this case, the geometric control hypothesis stated in \Cref{ass:linschroControllable} is not required because \eqref{eq:linschro} is STNC when $\omega$ is a nonempty open subset of $\Omega = (0,L)$. Extending \Cref{tm:mainresult1} and \Cref{tm:mainresult2} to the 3-dimensional case is also possible, albeit with additional technical difficulties, as described below. However, higher dimensional cases are more challenging due to the lack of an embedding $H^2(\Omega) \hookrightarrow L^{\infty}(\Omega)$.
\end{itemize}

\subsection{Extension to the 3-dimensional case}

Here, we outline possible extensions of our main results, \Cref{tm:mainresult1} and \Cref{tm:mainresult2}, to the 3-dimensional case, emphasizing the key differences. For $\Omega \subset \R^3$, a smooth bounded domain, the $L^2$-critical focusing nonlinearity in \eqref{eq:L2CriticalNLSRd} corresponds to the $p=4/3$ nonlinearity,
\begin{equation}
	\label{eq:L2CriticalNLS3d}
		\left\{
			\begin{array}{ll}
				 i \partial_t \psi + \Delta \psi = - |\psi|^{4/3} \psi,  & \text{ in }  (0,T) \times \Omega, 
				\\
				\psi = 0, & \text{ on } (0,T)\times \partial \Omega, 
				\\
				\psi(0, \cdot) = \psi_0, & \text{ in } \Omega.
			\end{array}
		\right.
\end{equation}
First, the local well-posedness of \eqref{eq:L2CriticalNLS3d} in $H^2(\Omega) \cap H_0^1(\Omega)$, as recalled in \Cref{lm:localwpH2H01}, trivially holds in dimension 3 thanks to the embedding $H^2(\Omega) \hookrightarrow L^{\infty}(\Omega)$. However, the local well-posedness in $H_0^1(\Omega)$, as described in \Cref{lm:localwpH01}, is not guaranteed a priori.

Without any additional assumptions, \Cref{thm:mainresultgod} still holds. The first part of \Cref{tm:globalwp} holds.

Now consider the nonlinear controlled equation:
\begin{equation}
	\label{eq:L2CriticalNLSControl3d}
		\left\{
			\begin{array}{ll}
				 i \partial_t \psi + \Delta \psi = - |\psi|^{4/3} \psi + v \mathds{1}_{\omega}, & \text{ in }  (0,T) \times \Omega, 
				\\
				\psi = 0, & \text{ on } (0,T)\times \partial \Omega, 
				\\
				\psi(0, \cdot) = \psi_0, & \text{ in } \Omega.
			\end{array}
		\right.
\end{equation}

The following generalization of \Cref{tm:mainresult1} holds:
\begin{tm}
\label{tm:mainresult13d}
Let $\Omega$ be a smooth bounded domain in $\R^3$. Let $n \geq 1$ and $x_1, \dots,x_n$ be $n$ distinct points in $\Omega$. Let $\omega \subset \Omega$ be an open set such that $x_1, \dots, x_n \in \omega$. Then, there exist $\delta>0$, $\lambda_0 >0$ such that for every $\lambda \geq \lambda_0$, there exists a a feedback control $v_{\lambda} \in L^{\infty}([0,\infty);H^2(\Omega) \cap H_0^1(\Omega))$, supported in $(0,T_{\lambda}) \times \omega$, and an associated global solution $\psi \in C([0,+\infty);H_0^1(\Omega))$ of \eqref{eq:L2CriticalNLSControl3d} starting from $\psi(0, \cdot) = \phi_{\lambda}(0,\cdot)+w_0$, where $\phi_\lambda$ is the blow-up profile in \Cref{thm:mainresultgod} and $\|w_0\|_{H^2\cap H^1_0(\Omega)}<\delta$. 
\end{tm}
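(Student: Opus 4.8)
The plan is to adapt the proof of \Cref{tm:mainresult1} to the three-dimensional setting, isolating the two places where the dimension matters. I reduce the construction of a global solution to two ingredients. First, I use the feedback to reach, at some finite time $T_\star \le T_\lambda$, a state of \emph{subcritical mass}, i.e. $\|\psi(T_\star,\cdot)\|_{L^2(\Omega)} < \|Q\|_{L^2(\R^3)}$, with $\psi(T_\star,\cdot) \in H^1_0(\Omega)$. Second, once the mass is subcritical I switch the control off and invoke the first part of \Cref{tm:globalwp} --- which, as noted in the excerpt, survives in dimension three --- to extend $\psi$ globally in the energy space. This dichotomy already explains why the global trajectory is only claimed in $C([0,+\infty);H^1_0(\Omega))$ and not in $C([0,+\infty);H^2(\Omega)\cap H^1_0(\Omega))$: the Brezis--Gallouet upgrade from $H^1_0$ to $H^2\cap H^1_0$ global well-posedness is a purely two-dimensional tool and is unavailable here.

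For the first ingredient, I design the control as a dissipative feedback supported in $\omega$, of the schematic form $v_\lambda = -\,i\,a(x)\,\psi$ with $a \ge 0$ smooth, $\operatorname{supp} a \subset \omega$ and $a>0$ on a neighbourhood of the points $x_1,\dots,x_n$. Testing \eqref{eq:L2CriticalNLSControl3d} against $\bar\psi$ and taking imaginary parts gives the mass balance $\tfrac{d}{dt}\|\psi\|_{L^2(\Omega)}^2 = -2\int_\omega a\,|\psi|^2 \le 0$, so the feedback can only remove mass, and it removes it precisely where $\psi$ is large. Because $\omega$ contains every blow-up point and, by \eqref{eq:defRLambda}--\eqref{eq:estimationrlambda}, the profile $\phi_\lambda$ concentrates at those points with mass exponentially small outside any fixed neighbourhood as $t\to T_\lambda^-$, the damping removes essentially all of the profile's mass $\sqrt n\,\|Q\|_{L^2(\R^3)}$, while the mass that escapes the damping is controlled, up to exponentially small terms, by $\|w_0\|_{L^2(\Omega)} \le C\delta$. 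Choosing $\delta$ to be a fixed fraction of $\|Q\|_{L^2(\R^3)}$ --- crucially independent of $\lambda$, as in the constant $c=1/16$ of the two-dimensional statement --- then forces the surviving mass below $\|Q\|_{L^2(\R^3)}$, which is exactly the subcritical threshold needed for the second ingredient.

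The genuine difficulty, and the step I expect to be the main obstacle, is to guarantee that the closed-loop solution issued from the \emph{perturbed} datum $\phi_\lambda(0,\cdot)+w_0$ really tracks $\phi_\lambda$ closely enough, and does not blow up prematurely, until the damping has finished its job --- and to do so with a smallness threshold $\delta$ that does not degrade as $\lambda\to\infty$. A naive continuity or Gronwall argument fails precisely because the $H^2$ norm of $\phi_\lambda(t,\cdot)$ diverges as $t\to T_\lambda^-$, so any $\delta$ obtained that way would collapse with $\lambda$. The fix is a \emph{rapid stabilization} estimate for the perturbation $w=\psi-\phi_\lambda$, analogous to \Cref{pr:rapidstab}: one linearizes \eqref{eq:L2CriticalNLSControl3d} about the time-dependent profile and builds a feedback whose damping rate beats the growth rate of $\phi_\lambda$, keeping $w$ small in $H^2\cap H^1_0$ on $(0,T_\lambda)$. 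Here the local well-posedness of \eqref{eq:L2CriticalNLS3d} in $H^2(\Omega)\cap H^1_0(\Omega)$ from \Cref{lm:localwpH2H01} is what keeps the construction meaningful on $(0,T_\lambda)$, since $H^2(\Omega)\hookrightarrow L^\infty(\Omega)$ still holds in $\R^3$; this is why the control and the trajectory remain $H^2\cap H^1_0$-regular on the control interval even though only $H^1_0$ is claimed globally.

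Relative to the two-dimensional proof, the only genuinely new technical point is the reduced smoothness of the nonlinearity: $z\mapsto |z|^{4/3}z$ is merely $C^1$ with Hölder-continuous derivative, so the fixed-point and stabilization estimates controlling $w$ must be carried out with the corresponding fractional and Hölder bounds rather than the smooth Taylor expansions available for the cubic nonlinearity in $\R^2$. Once the rapid stabilization keeps $w$ small up to a time $T_\star$ close to $T_\lambda$, the mass-dissipation bookkeeping of the previous paragraphs applies verbatim, yielding $\|\psi(T_\star,\cdot)\|_{L^2(\Omega)}<\|Q\|_{L^2(\R^3)}$ with $\psi(T_\star,\cdot)\in H^1_0(\Omega)$, and the global $H^1_0$ extension concludes the proof.
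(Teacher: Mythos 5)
Your global skeleton (reach subcritical mass before $T_\lambda$, switch the control off, invoke the $H_0^1$ global theory since Brezis--Gallouet is unavailable in 3-d, accept loss of uniqueness) is indeed the paper's skeleton, and your remark that the non-algebraic nonlinearity $|z|^{4/3}z$ forces Kato-type estimates is in the right direction (cf.\ \Cref{lm:cubicH2}). But the mechanism you propose for removing the mass cannot work, for a quantitative reason you never confront: the control must be supported in $(0,T_\lambda)\times\omega$, and by \eqref{eq:Tlambda} one has $T_\lambda\le c\lambda^{-2}\to 0$. With your feedback $v_\lambda=-i\,a(x)\psi$ and a $\lambda$-independent coefficient $a$, your own mass identity gives, by Gronwall,
\begin{equation*}
\|\psi(t,\cdot)\|_{L^2(\Omega)}^2 \;\ge\; e^{-2\|a\|_{L^\infty}\,t}\,\|\psi(0,\cdot)\|_{L^2(\Omega)}^2,
\end{equation*}
so over the entire control window the mass decreases by at most a factor $1-O(\lambda^{-2})$: the removed mass tends to $0$ as $\lambda\to\infty$, nowhere near the required drop from $\sqrt n\,\|Q\|_{L^2(\R^3)}+O(\delta)$ to below $\|Q\|_{L^2(\R^3)}$. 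Any damping-based argument must use a rate diverging faster than $\lambda^{2}$; this is exactly what the paper arranges in \eqref{eq:defmut1t2}, where the total damping action $\mu(t_2-t_1)=1/(\lambda T_\lambda^2)\to\infty$.

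Your plan is also internally inconsistent. You require the rapid stabilization to keep $w=\psi-\phi_\lambda$ small in $H^2\cap H_0^1$ up to a time $T_\star$ close to $T_\lambda$, and simultaneously claim that the damping ``removes essentially all of the profile's mass''. These cannot both hold: the profile's mass is conserved, $\|\phi_\lambda(t,\cdot)\|_{L^2(\Omega)}=\sqrt n\,\|Q\|_{L^2}$ by \eqref{eq:consmassL2philammbda}, so tracking forces $\|\psi(T_\star,\cdot)\|_{L^2(\Omega)}\ge \sqrt n\,\|Q\|_{L^2}-C\delta$, which contradicts your bookkeeping (final mass $\lesssim\delta$) for every $n$, and rules out subcriticality altogether for $n\ge 2$. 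The paper resolves precisely this tension by \emph{not} tracking $\phi_\lambda$ during the stabilization phase: on $(t_1,t_2)$ the closed-loop solution is designed, via the ansatz \eqref{eq:ansatz}, to track the \emph{damped} profile $e^{-\mu(t-t_1)}\phi_\lambda$, and the feedback \eqref{eq:defcontrolfeedback} both cancels the nonlinearity inside $\omega$ (so the focusing dynamics cannot outrun the damping there) and injects the damping term $-i\mu e^{-\mu(t-t_1)}\phi_\lambda$, while outside $\omega$ the profile is exponentially small by \Cref{lem:R} and the residual $\tilde w$ is closed by a fixed point (in 3-d, Kato's method: an $H^2$ ball equipped with the $L^2$ distance). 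The output of \Cref{pr:rapidstab} is then the much stronger conclusion $\|\psi(t_2,\cdot)\|_{H^2(\Omega)}\le\varepsilon$ --- not merely subcritical mass --- after which the 3-d Gagliardo--Nirenberg inequality yields the global $H_0^1$ extension. To repair your proof you would have to replace the pure damping $-ia\psi$ by a feedback of this profile-adapted form, with $\lambda$-dependent rate.
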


\Cref{tm:mainresult13d} differs from \Cref{tm:mainresult1}. Indeed, the solution $\psi$ belongs to $C([0,+\infty);H_0^1(\Omega))$, meaning blow-up prevention is only achieved in the energy space $H_0^1(\Omega)$, rather than in $H^2(\Omega) \cap H_0^1(\Omega)$. This is due to the fact that Brezis-Gallouet's inequality does not hold in dimension 3, and therefore the second part of \Cref{tm:globalwp} does not hold a priori.\\

The following generalization of \Cref{tm:mainresult2} holds:
\begin{tm}
\label{tm:mainresult23d}
Let $\Omega$ be a smooth bounded domain in $\R^3$. Let $n \geq 1$ and $x_1, \dots,x_n$ be $n$ distinct points in $\Omega$. Let $\omega \subset \Omega$ be an open set such that $x_1, \dots, x_n \in \omega$, and assume \Cref{ass:linschroControllable} holds. Let $T>0$. Then, there exist $\delta>0$, $\lambda_0 >0$ such that for every $\lambda \geq \lambda_0$, there exists a control $v\in L^{\infty}([0,T];H^2(\Omega) \cap H_0^1(\Omega))$, supported in $(0,T) \times \omega$, and an associated solution $\psi \in C([0,T];H^2(\Omega) \cap H_0^1(\Omega))$ of \eqref{eq:L2CriticalNLSControl}, starting from $\psi(0, \cdot) = \phi_{\lambda}(0,\cdot)+w_0$, where $\phi_\lambda$ is the blow-up profile in \Cref{thm:mainresultgod} and $\|w_0\|_{H^2\cap H^1_0(\Omega)}<\delta$, satisfying 
\begin{equation}
\label{eq:psizero3d}
\psi(T,\cdot) = 0.
\end{equation}
\end{tm}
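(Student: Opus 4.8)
The plan is to transpose to three dimensions the two-step scheme underlying \Cref{tm:mainresult2}: a blow-up prevention (stabilization) step bringing the perturbed profile to a small state, followed by a small-time local null-controllability step steering that small state to $0$. Fix $T>0$ and split $[0,T]=[0,T/2]\cup[T/2,T]$. Since $T_\lambda\in(0,c\lambda^{-2})$ by \Cref{thm:mainresultgod}, enlarging $\lambda_0$ we may assume $T_\lambda<T/2$ for all $\lambda\geq\lambda_0$, so that the (prevented) blow-up occurs well inside the first subinterval.

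First I would establish the three-dimensional counterpart of the local null-controllability result (the analog of \Cref{thm:localnullcontrollabilityNLS}): under \Cref{ass:linschroControllable} there exists $\delta_{T/2}>0$ such that any $\psi_{1/2}\in H^2(\Omega)\cap H_0^1(\Omega)$ with $\|\psi_{1/2}\|_{H^2\cap H_0^1}<\delta_{T/2}$ is driven to $0$ at time $T/2$ by a control $v\in L^\infty([0,T/2];H^2\cap H_0^1)$ supported in $\omega$, with trajectory in $C([0,T/2];H^2\cap H_0^1)$. This is obtained as in the planar case, combining the (STNC) of the linear equation \eqref{eq:linschro} with a fixed-point argument absorbing the nonlinear remainder. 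The decisive point in dimension $3$ is that $H^2(\Omega)\hookrightarrow L^\infty(\Omega)$ still holds ($2>3/2$), so $\psi\mapsto|\psi|^{4/3}\psi$ remains locally Lipschitz on bounded sets of $H^2\cap H_0^1$ and the estimates closing the fixed point survive the change of nonlinearity. The lower regularity of $s\mapsto|s|^{4/3}s$ (merely $C^{1,1/3}$) forces one to replace the exact second derivatives used for the cubic nonlinearity by Hölder/fractional-chain-rule substitutes, but these are controlled through the uniform $L^\infty$ bound together with $\nabla\psi\in L^4(\Omega)$ (from $H^2\hookrightarrow W^{1,4}$ in 3D).

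Next I would establish the three-dimensional stabilization step, i.e. the analog of \Cref{pr:rapidstab}: a feedback control on $[0,T/2]$ supported in $\omega$ steering $\phi_\lambda(0,\cdot)+w_0$, with $\|w_0\|_{H^2\cap H_0^1}<\delta:=c\,\delta_{T/2}$, to a state of $H^2\cap H_0^1$-norm $<\delta_{T/2}$ at time $T/2$, keeping the trajectory in $C([0,T/2];H^2\cap H_0^1)$. Writing $\psi=\phi_\lambda+w$, one designs the feedback to dissipate the concentrating mass inside $\omega$—where $\phi_\lambda$ is localized for $t$ close to $T_\lambda$ by \eqref{eq:expdecayQ} and \eqref{eq:defRLambda}--\eqref{eq:estimationrlambda}—and to damp the perturbation $w$ fast enough to cross $T_\lambda$ without blow-up, exactly as in 2D. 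Crucially, this is a finite-time argument on $[0,T/2]$: the failure of Brezis-Gallouet's inequality in dimension $3$, which is precisely what obstructs the global $H^2\cap H_0^1$ conclusion of \Cref{tm:mainresult1} and forces the weaker $H_0^1$ statement of \Cref{tm:mainresult13d}, never intervenes here, since one only maintains $H^2$ bounds on the bounded interval via the $H^2\cap H_0^1$ local theory (\Cref{lm:localwpH2H01}, valid in 3D) and the a priori bounds supplied by the feedback, rather than propagating them from $H_0^1$ bounds globally in time.

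Finally I would concatenate the two steps: apply the stabilization on $[0,T/2]$ to reach $\|\psi(T/2,\cdot)\|_{H^2\cap H_0^1}<\delta_{T/2}$, then the local null-controllability on $[T/2,T]$ to obtain $\psi(T,\cdot)=0$; gluing the controls yields $v\in L^\infty([0,T];H^2\cap H_0^1)$ supported in $(0,T)\times\omega$ and $\psi\in C([0,T];H^2\cap H_0^1)$, since by construction no $H^2$-blow-up occurs on $[0,T]$. I expect the main obstacle to be the stabilization step across the prevented blow-up: one must simultaneously control the concentrating profile and the perturbation in the $H^2$ topology using only the less regular nonlinearity $|\psi|^{4/3}\psi$, so that the second-order energy estimates must be rederived with fractional-chain-rule/Hölder bounds in place of the smooth computations available in 2D, and one must verify that these estimates close on $[0,T/2]$ without appealing to Brezis-Gallouet. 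By contrast, the local null-controllability step is comparatively robust once $H^2\hookrightarrow L^\infty$ is available.
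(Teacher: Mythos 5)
Your global two-stage architecture --- rapid stabilization of the perturbed profile on a first interval with $T_\lambda<T/2$, then small-time local null-controllability from a small $H^2\cap H_0^1$ state, then concatenation of controls and trajectories --- is exactly the paper's route (\Cref{sec:3dextension}, following the proof of \Cref{tm:mainresult2}), and your observations that Brezis--Gallouet is never needed on a finite horizon and that the obstruction to \Cref{tm:mainresult13d} does not intervene here are correct. The gap is in how you treat the nonlinearity inside the two fixed-point arguments. Your assertion that $\psi\mapsto|\psi|^{4/3}\psi$ ``remains locally Lipschitz on bounded sets of $H^2(\Omega)\cap H_0^1(\Omega)$'' is false if meant as a map into $H^2(\Omega)$, which is what a contraction in $C([0,T/2];H^2\cap H_0^1)$ requires: since $s\mapsto|s|^{4/3}s$ is only $C^{1,1/3}$, the difference of second derivatives produces terms of the form $\bigl(f''(u)-f''(v)\bigr)|\nabla u|^2$ that can only be bounded by $\|u-v\|_{L^\infty}^{1/3}\|\nabla u\|_{L^4}^2$, i.e.\ a H\"older-$1/3$ estimate --- precisely the ``H\"older/fractional-chain-rule substitutes'' you concede. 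But a H\"older estimate with exponent strictly less than $1$ is not a contraction: Picard iterates need not converge, uniqueness fails, and Banach's theorem cannot be invoked. So neither your stabilization step nor your 3-d analog of \Cref{thm:localnullcontrollabilityNLS} closes as described.

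The missing device, and what the paper actually does, is Kato's method: perform the fixed point on the $H^2$-bounded ball $X_M\subset C([t_1,t_2];H^2(\Omega)\cap H_0^1(\Omega))$ but endowed with the \emph{weaker} distance $d(u,v)=\sup_t\|u(t)-v(t)\|_{L^2(\Omega)}$. Two facts then rescue the argument: (i) $(X_M,d)$ is a complete metric space (a lemma borrowed from Godet), and (ii) the nonlinearity is genuinely Lipschitz in this weak metric,
\begin{equation*}
\bigl\| |u|^{4/3}u-|v|^{4/3}v \bigr\|_{L^2(\Omega)} \leq C \|u-v\|_{L^2(\Omega)}\bigl(\|u\|_{L^\infty(\Omega)}+\|v\|_{L^\infty(\Omega)}\bigr)^{4/3},
\end{equation*}
see \Cref{lm:cubicH2}, with constants controlled through $H^2(\Omega)\hookrightarrow L^\infty(\Omega)$ on the ball. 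The $H^2$ estimates you invoke (boundedness $\||u|^{4/3}u\|_{H^2}\lesssim\|u\|_{H^2}^{1+4/3}$) are then used only to show that the map sends the ball into itself, never for the contraction. The same substitution is required in the null-controllability step, where the planar proof's contraction estimate (which exploits that $H^2$ is an algebra and that $|\psi|^2\psi$ is a polynomial in $\psi,\overline{\psi}$) is unavailable; the paper again points to Kato's method there. Without this change of metric your outline cannot be completed.
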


\Cref{tm:mainresult23d} generalizes \Cref{tm:mainresult2}. The controlled solution lies in the space $C([0,T];H^2(\Omega) \cap H_0^1(\Omega))$, where local well-posedness holds independently of $\Omega$. Additionally, small-time local null-controllability in $H^2(\Omega) \cap H_0^1(\Omega)$ is guaranteed under \Cref{ass:linschroControllable}. 
Proofs of these results are sketched in \Cref{sec:3dextension}, pointing out the main differences between the 2 and 3 dimensional cases. Aside of the well-posedness framework, the main difference between the proof of the main results in dimension 2 and 3 comes from the use of Kato's method in dimension 3. This method allows to weaken the distance on the complete space where the fixed-point argument is performed. In our case, the contraction estimates on the $p=4/3$ nonlinearity are only needed in $L^2(\Omega)$ (see \Cref{lm:cubicH2}). The adaptation of the proof of \Cref{tm:mainresult1} and \Cref{tm:mainresult2} to the 3-dimensional case being essentially only technical, we only provided sketches of the proofs to keep the presentation of the main ideas of the paper clear.

\subsection{Strategy of the proofs of the main results \Cref{tm:mainresult1} and  \Cref{tm:mainresult2}}

\textbf{Reduction to a single blow-up point.} To simplify the proof of \Cref{tm:mainresult1} and \Cref{tm:mainresult2}, we assume that there is only one blow-up point, denoted by $x_0 \in \Omega$. The case of a finite number of blow-up points is a straightforward adaptation of the following proof.\\

\textbf{Proof of \Cref{tm:mainresult1}.} We split the time interval $(0,+\infty)$ into three subintervals: $(0, t_1) \cup (t_1, t_2) \cup (t_2, +\infty)$, where $0 < t_1 < t_2 < T_{\lambda}$ will be appropriately chosen during the proof. We begin by treating the case $\psi(0,\cdot)=\phi_\lambda(0,\cdot)$. We then proceed to prove that, for the designed feedback law, there exists an open set around $\phi_\lambda(0,\cdot)$ such that the same conclusions to \Cref{tm:mainresult1} hold. \\

\textbf{\textit{Free evolution in $(0,t_1)$.}} In the first time interval $(0,t_1)$, we let the solution $\psi$ of \eqref{eq:L2CriticalNLSControl} evolve freely with $v=0$, starting from $\psi(0, \cdot) = \phi_{\lambda}(0,\cdot)$. Then, by the local well-posedness in $H^2(\Omega) \cap H_0^1(\Omega)$, $\psi = \phi_{\lambda}$ in $(0,t_1)$. A key step here is that we will choose $t_1$ so that the solution $\psi$ is highly concentrated near the blow-up point. Consequently, the solution is exponentially decaying away from the blow-up point due to \eqref{eq:estimationrlambda}, the time rescaling of the solution \eqref{eq:defRLambda}, and the exponential decay of $Q$ in \eqref{eq:expdecayQ} outside $0$. \\

\textbf{\textit{Rapid stabilization in $(t_1,t_2)$.}} In the second time interval $(t_1, t_2)$, we choose the control $v\in C([t_1,t_2];H^2(\Omega) \cap H_0^1(\Omega))$, supported in $\omega$ (a neighborhood containing the blow-up point), in a nonlinear feedback form appropriately designed to achieve rapid stabilization of the solution $\psi$ in $\omega$. This ensures exponential decay of the controlled solution in $\omega$. A fixed-point argument is employed to compensate for errors outside $\omega$, thanks to the smallness of the blow-up profile in the exterior region. This control strategy ensures that $\|\psi(t_2, \cdot)\|_{L^2(\Omega)} <  \|Q\|_{L^2(\R^2)}$. See \Cref{pr:rapidstab} below.\\

\textbf{\textit{Free evolution in $(t_2, +\infty)$.}} In the last time interval $(t_2, +\infty)$, we set $v = 0$. By using the global well-posedness in $H^2(\Omega) \cap H_0^1(\Omega)$ for data with $L^2$-norm smaller than $\|Q\|_{L^2(\mathbb{R}^2)}$, as recalled in \Cref{tm:globalwp}, the solution $\psi \in C([t_2, +\infty);H^2(\Omega) \cap H_0^1(\Omega))$ becomes global.\\

\textbf{\textit{Definition of a global solution.}} The combination of all these steps allows for the construction of a control $v\in L^{2}([0,+\infty);H^2(\Omega) \cap H_0^1(\Omega))$ such that the solution $\psi \in C([0, +\infty);H^2(\Omega) \cap H_0^1(\Omega))$ of \eqref{eq:L2CriticalNLSControl} is global. See \Cref{sec:endproofTh1} below.\\

\textbf{\textit{Extension to an open set around the blow-up profile.}} The proof above may be extended to an open set of initial data in $H^2(\Omega) \cap H_0^1(\Omega)$ around $\phi_\lambda(0,\cdot)$ by designing a nonlinear feedback law $v_\lambda=\mathcal{K}_\lambda(\psi)$ in the time interval $(0,t_1)$ such that $\mathcal{K}_\lambda(\phi_\lambda(t,\cdot))=0,\ t\in (0,t_1)$. Since $\mathcal{K}_\lambda(\phi_\lambda(t,\cdot))=0, \ t\in (0,t_1)$, the free evolution of the blow-up profile is maintained. Moreover, the nonlinear feedback law $\mathcal{K}_\lambda$ is designed to prevent the nonlinear term $|\psi|^2\psi$ to drive the solution away from the blow-up profile. Combined with a certain smallness of $\phi_\lambda(t,\cdot),\ t\in (0,t_1)$ outside of the control region, we perform a fixed-point argument allowing us to conclude that there exists an open set  in $H^2(\Omega) \cap H_0^1(\Omega)$ around $\phi_\lambda(0,\cdot)$ such that the solutions remains close to $\phi_\lambda(t_1,\cdot)$ at time $t_1$. We then prove that the nonlinear feedback control designed in the time interval $t\in (t_1,t_2)$ is sufficient to drive the solution in a state such that $\|\psi(t_2, \cdot)\|_{L^2(\Omega)} <  \|Q\|_{L^2(\R^2)}$, allowing us to draw the same conclusion as for the solution $\psi$ starting from $\phi_\lambda(0,\cdot)$.\\

\textbf{Proof of \Cref{tm:mainresult2}.} We split the time interval $(0,T)$ into three subintervals: $(0, t_1) \cup (t_1, t_2) \cup (t_2, T)$, where $0 < t_1 < t_2 < T_{\lambda} < T/2$ will be appropriately chosen during the proof. Notice that, by \Cref{thm:mainresultgod}, $T_\lambda$ may always be chosen such that $T_{\lambda} < T/2$ by taking $\lambda>0$ sufficiently large. Again, we begin by detailing the proof for the blow-up profile, before proving the existence of an open set of initial data close to it such that the conclusions of \Cref{tm:mainresult2} hold.\\

\textbf{\textit{Free evolution in $(0,t_1)$.}} In the first time interval $(0,t_1)$, we let the solution $\psi$ of \eqref{eq:L2CriticalNLSControl} evolve freely with $v=0$, starting from $\psi(0, \cdot) = \phi_{\lambda}(0,\cdot)$. Then, by the local well-posedness in $H^2(\Omega) \cap H_0^1(\Omega)$, $\psi = \phi_{\lambda}$ in $(0,t_1)$. Again, we will choose $t_1$ so that the solution $\psi$ is highly concentrated near the blow-up point. Consequently, the solution is exponentially decaying away from the blow-up point due to \eqref{eq:estimationrlambda}, the time rescaling of the solution \eqref{eq:defRLambda}, and the exponential decay of $Q$ in \eqref{eq:expdecayQ} outside $0$.\\

\textbf{\textit{Rapid stabilization in $(t_1,t_2)$.}} In the second time interval $(t_1, t_2)$, we choose the control $v\in L^{2}([t_1,t_2];H^2(\Omega) \cap H_0^1(\Omega))$, supported in $\omega$ (a neighborhood containing the blow-up point), in a nonlinear feedback form appropriately designed to achieve rapid stabilization of the solution $\psi$ in $\omega$. This ensures exponential decay of the controlled solution in $\omega$. A fixed-point argument is employed to compensate for errors outside $\omega$, thanks to the smallness of the blow-up profile in the exterior region. This control strategy ensures that $\|\psi(t_2, \cdot)\|_{H^2(\Omega)} <  \delta$, where $\delta$ is the radius of the ball in $H^2(\Omega)$ for which local null-controllability of \eqref{eq:L2CriticalNLSControl} holds in time $T/2$. See \Cref{pr:rapidstab} below.\\

\textbf{\textit{Local null-controllability in $(t_2, T)$.}} In the last time interval $(t_2, T)$, with $T-t_2 > T/2$, one can find a control $v\in L^{2}([t_2,T];H^2(\Omega) \cap H_0^1(\Omega))$, supported in $(t_2,T) \times \omega$, steering the solution $\psi$ of \eqref{eq:L2CriticalNLSControl} to $0$ in time $T$. See \Cref{thm:localnullcontrollabilityNLS} below and \Cref{sec:proofth2}.\\

\textbf{\textit{Definition of a null-controllable blow-up profile.}} The combination of all these steps allows for the construction of a control $v\in L^{\infty}([0,T];H^2(\Omega) \cap H_0^1(\Omega))$, supported in $(0,T)\times\omega$, such that the solution $\psi \in C([0, T];H^2(\Omega) \cap H_0^1(\Omega))$ of \eqref{eq:L2CriticalNLSControl} satisfies $\psi(T,\cdot)=0$.\\

\textbf{\textit{Extension to an open set around the null-controllable blow-up profile.}} We proceed as for the extension step of the proof of \Cref{tm:mainresult2}. First, we design a nonlinear feedback law $\mathcal{K}_\lambda$ in the time interval $t\in (0,t_1)$ such that $\mathcal{K}_\lambda(\phi_\lambda(t,\cdot))=0,\ t\in (0,t_1)$, and such that the nonlinear term may be mediated to ensure that solutions starting from an initial data near $\phi_\lambda(0,\cdot)$ remains close to the blow-up profile in $t\in (0,t_1)$. In the time interval $t\in (t_1,t_2)$, we prove that the nonlinear feedback law $\mathcal{K}$ allows solution $\psi$ starting near from $\phi_\lambda(t_1,\cdot)$ satisfy $\|\psi(t_2, \cdot)\|_{H^2(\Omega)} <  \delta$, where $\delta$ is the radius of the ball in $H^2(\Omega)$ for which local null-controllability of \eqref{eq:L2CriticalNLSControl} holds in time $T/2$. Applying the small-time  null-controllability of \eqref{eq:L2CriticalNLSControl}, the conclusions of \Cref{tm:mainresult2} hold.

\section{Proof of the main results}

We begin this section by establishing useful technical lemmas. 

\subsection{Useful lemmas}

Let $\omega_1=B(x_0,r) \subset \omega$ with $r>0$ be such that $B(x_0,2r)\subset \omega$. The first lemma states that outside $\omega_1$, the function $R_{\lambda}$ defined in \eqref{eq:defRLambda} decays exponentially.
\begin{lm}\label{lem:R}
There exists $C, \delta=\delta(r), \lambda_0>0$ such that for any $\lambda \geq \lambda_0$,
\begin{equation}
\label{eq:boundRLambdaOutsideomega}
\|R_\lambda(t,\cdot) \|_{H^2(\Omega \setminus \omega_1)} \leq C e^{-\frac{\delta}{\lambda(T_\lambda-t)}}, \quad \forall t\in [0, T_{\lambda}).
\end{equation}
\end{lm}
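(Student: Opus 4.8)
Having reduced to a single blow-up point $x_0$ (the general case of $n$ points follows by summing the $n$ terms, whose cutoffs have disjoint supports), recall from \eqref{eq:defRLambda} that, with the shorthand $\mu := \lambda(T_\lambda-t)$ and $\varphi := \varphi_1$,
\[
R_\lambda(t,x) = \frac{1}{\mu}\, e^{i\theta(t,x)}\, \varphi(x)\, Q\Bigl(\frac{x-x_0}{\mu}\Bigr), \qquad \theta(t,x) = \frac{4-\lambda^2|x-x_0|^2}{4\lambda\mu}.
\]
The plan is to estimate $\partial^\alpha R_\lambda$ for every $|\alpha|\le 2$ pointwise, then integrate after the rescaling $y=(x-x_0)/\mu$, exploiting the exponential decay \eqref{eq:expdecayQ} of $Q$ and its derivatives. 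The one arithmetic fact driving the whole argument is that, by \Cref{thm:mainresultgod}, $T_\lambda<c\lambda^{-2}$, whence for all $t\in[0,T_\lambda)$
\[
\mu \le \lambda T_\lambda < c\lambda^{-1}, \qquad\text{so that}\qquad \lambda\mu<c \quad\text{and}\quad \lambda \le c\,\mu^{-1}.
\]
In particular $\mu$ is uniformly small for $\lambda\ge\lambda_0$, and every algebraic factor in $\lambda$ can be traded for an algebraic factor in $\mu^{-1}$.

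First I would expand $\partial^\alpha R_\lambda$ by the Leibniz rule. Using $\nabla\theta=-\lambda(x-x_0)/(2\mu)$ and $\nabla^2\theta$ of size $\lambda/\mu$, each resulting term is a product of a power of $\mu^{-1}$, finitely many phase gradients, bounded derivatives of the cutoff $\varphi$, and a derivative $\partial^\beta Q$ evaluated at $(x-x_0)/\mu$. Passing to the variable $y=(x-x_0)/\mu$, for which $|x-x_0|=\mu|y|$ and $\nabla\theta=-\tfrac12\lambda y$, one checks that every such term is pointwise bounded by
\[
C\,\mu^{-1-m}\,\lambda^{a}\,(1+|y|)^{b}\,\bigl|\partial^\beta Q(y)\bigr|, \qquad |\alpha|\le 2,
\]
where $a\le 2$ (the degree in $\lambda$ comes only from differentiating the phase, at most twice) and $m,b$ are bounded integers.

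Finally, on $\Omega\setminus\omega_1=\{\,x\in\Omega:|x-x_0|\ge r\,\}$ we have $|y|\ge r/\mu$. Invoking \eqref{eq:expdecayQ}, splitting $e^{-2D_\beta|y|}\le e^{-D_\beta r/\mu}e^{-D_\beta|y|}$ on this set, and changing variables $x-x_0=\mu y$ (Jacobian $\mu^2\,dy$ in $\R^2$), the $L^2(\Omega\setminus\omega_1)$ norm of the term above is
\[
\le C\,\mu^{-m}\,\lambda^{a}\,e^{-\frac{D_\beta r}{2\mu}}\Bigl(\int_{\R^2}(1+|y|)^{2b}e^{-D_\beta|y|}\,dy\Bigr)^{1/2},
\]
with the remaining integral finite. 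Using $\lambda\le c\mu^{-1}$ the prefactor becomes $\le C\mu^{-m-a}$, and the elementary inequality $\mu^{-N}e^{-a_0/\mu}\le C_{N,a_0}e^{-a_0/(2\mu)}$ (valid for $\mu>0$) absorbs it into the exponential. Summing the finitely many terms over $|\alpha|\le 2$ yields \eqref{eq:boundRLambdaOutsideomega} with, for instance, $\delta=\tfrac14\,r\min_{|\beta|\le 2}D_\beta$, uniformly in $t\in[0,T_\lambda)$ and $\lambda\ge\lambda_0$.

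I expect the only genuine obstacle to be the bookkeeping of the powers of $\lambda$ and $\mu^{-1}$ generated by differentiating the highly oscillatory phase $e^{i\theta}$; these never cause trouble precisely because the blow-up time bound $T_\lambda<c\lambda^{-2}$ forces $\mu$ to be so small that $\lambda\le c\mu^{-1}$, so that any algebraic growth is killed by an arbitrarily small fraction of the exponential decay of $Q$.
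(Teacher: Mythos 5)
Your proof is correct and takes essentially the same approach as the paper: the paper's own proof is a one-line assertion that \eqref{eq:boundRLambdaOutsideomega} follows immediately from the definition \eqref{eq:defRLambda}, the fact that $|x-x_0|>r$ on $\Omega\setminus\omega_1$, and the exponential decay \eqref{eq:expdecayQ} of $Q$. Your write-up simply carries out in full the bookkeeping that the paper leaves implicit, namely controlling the powers of $\lambda$ and of $\bigl(\lambda(T_\lambda-t)\bigr)^{-1}$ produced by differentiating the phase and absorbing them into the exponential via $T_\lambda<c\lambda^{-2}$.
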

\begin{proof}
For $x \in \Omega \setminus \omega_1$, note that $r < |x-x_0|$.
Then from the definition of $R_{\lambda}$ given in \eqref{eq:defRLambda} and the estimates on $Q$ in \eqref{eq:expdecayQ}, we immediately obtain \eqref{eq:boundRLambdaOutsideomega} for some $C>0$ and $\delta = \delta(r)>0$.
\end{proof}

The second lemma consists in establishing estimates on the blow-up profile defined in \eqref{eq:decompositionphilambda}.
\begin{lm}
\label{lem:philambda}
There exists $C, \alpha, \beta, \lambda_0 >0$ such that for any $\lambda \geq \lambda_0$, 
\begin{equation}
\label{eq:philambdatH2}
\|\phi_{\lambda}(t, \cdot)\|_{H^2(\Omega)} \leq  \dfrac{C}{\lambda^{\alpha}(T_\lambda-t)^\beta}\qquad \forall t \in [0,T_{\lambda}).
\end{equation}
\end{lm}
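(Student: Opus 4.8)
The plan is to exploit the decomposition $\phi_\lambda = R_\lambda + r_\lambda$ from \eqref{eq:decompositionphilambda} and estimate the two pieces separately, the whole difficulty being concentrated in $R_\lambda$. The remainder is immediate: by \eqref{eq:estimationrlambda} we have $\|r_\lambda(t,\cdot)\|_{H^2(\Omega)} \le c\,e^{-\kappa/(\lambda(T_\lambda-t))}$, and writing $s := \lambda(T_\lambda-t)$, which ranges over the bounded interval $(0,\lambda T_\lambda] \subset (0, c/\lambda_0]$ thanks to $T_\lambda < c\lambda^{-2}$, we see that $e^{-\kappa/s}\to 0$ while $s^{-2}\to +\infty$ as $s\to 0^+$; hence $e^{-\kappa/s}\le C s^{-2}$ on this range, so $r_\lambda$ already satisfies a bound of the announced form with $\alpha=\beta=2$. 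It therefore suffices to produce the same bound for $R_\lambda$.

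To estimate $R_\lambda$, I would set $\mu = \mu(t) := \lambda(T_\lambda-t)$ and treat a single bump, the general case following verbatim since the cutoffs $\varphi_k$ have disjoint supports, so that the squared $H^2$ norms simply add up (giving a harmless factor $\sqrt n$). From \eqref{eq:defRLambda}, a single term reads $\mu^{-1} e^{i\theta}\varphi\, Q((\cdot-x_0)/\mu)$ with phase $\theta(x) = (4-\lambda^2|x-x_0|^2)/(4\lambda^2(T_\lambda-t))$, so that $\nabla\theta = -(x-x_0)/(2(T_\lambda-t))$ and $\nabla^2\theta = -(2(T_\lambda-t))^{-1}\mathrm{Id}$. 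Differentiating up to order two and performing the change of variables $y=(x-x_0)/\mu$, every spatial derivative falling on $Q((\cdot-x_0)/\mu)$ yields a factor $\mu^{-1}$, while the exponential decay \eqref{eq:expdecayQ} guarantees that all weighted integrals $\int |y|^j|\partial^\gamma Q(y)|^2\,dy$ produced after rescaling are finite. The dominant contribution is the term $\varphi\,\nabla^2\big(Q((\cdot-x_0)/\mu)\big)$, whose rescaled $L^2$ size is $\mu^{-2}\|\nabla^2 Q\|_{L^2}$, i.e. exactly $\lambda^{-2}(T_\lambda-t)^{-2}$.

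The main obstacle is that the derivatives of the phase $\theta$ generate terms which, after rescaling, carry stray powers of $\lambda$ rather than of $\mu^{-1}$: for instance the contribution of $|\nabla\theta|^2\, Q((\cdot-x_0)/\mu)$ rescales to size $\sim \lambda^2$, and that of $\nabla\theta\cdot\nabla\big(Q((\cdot-x_0)/\mu)\big)$ to size $\sim (T_\lambda-t)^{-1} = \lambda\mu^{-1}$; a priori these \emph{grow} in $\lambda$ and would ruin a bound that decays in $\lambda$. The resolution is to invoke the scaling constraint $T_\lambda \in (0, c\lambda^{-2})$ of \Cref{thm:mainresultgod}, which gives $\lambda^2(T_\lambda-t) \le \lambda^2 T_\lambda < c$, equivalently $\lambda < c\,\mu^{-1}$. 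Thus every stray factor $\lambda$ is dominated by $c\,\mu^{-1}$, so that the term $\sim\lambda^2$ is $\lesssim \mu^{-2}$ and the term $\sim\lambda\mu^{-1}$ is $\lesssim \mu^{-2}$, and likewise all remaining subdominant contributions (of size $O(1)$ or $O(\mu^{-1})$, controlled by enlarging $\lambda_0$ so that $\mu<1$) are $\le C\mu^{-2}$. Collecting all terms yields $\|R_\lambda(t,\cdot)\|_{H^2(\Omega)} \le C\mu^{-2} = C\lambda^{-2}(T_\lambda-t)^{-2}$, and combining with the bound on $r_\lambda$ establishes \eqref{eq:philambdatH2} with $\alpha=\beta=2$.
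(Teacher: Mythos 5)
Your proof is correct and follows exactly the route the paper takes, namely the decomposition $\phi_\lambda = R_\lambda + r_\lambda$ together with \eqref{eq:defRLambda}, \eqref{eq:expdecayQ} and \eqref{eq:estimationrlambda} (the paper simply declares this a ``straightforward application'' without writing the computation). Your execution is sound, including the one genuinely delicate point the paper leaves implicit: the stray powers of $\lambda$ coming from the phase derivatives are absorbed via $\lambda^2(T_\lambda-t)\le \lambda^2 T_\lambda < c$ from \Cref{thm:mainresultgod}, yielding the stated bound with $\alpha=\beta=2$.
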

\begin{proof}
This is a straightforward application of the decomposition of $\phi_\lambda$ as in \eqref{eq:decompositionphilambda}, the definition of $R_\lambda$ in \eqref{eq:defRLambda}, the estimates on $Q$ in \eqref{eq:expdecayQ} and the estimate on $r_\lambda$ in \eqref{eq:estimationrlambda}.
\end{proof}

\subsection{Rapid stabilization of the blow-up profile in the control zone}

In this part, we aim at establishing the following approximate controllability result for data close to $\phi_{\lambda}(0,\cdot)$. 

\begin{pr}
\label{pr:rapidstab}
For every $\varepsilon >0$, there exist $\delta>0$, $\lambda_0 >0$ such that for every $\lambda \geq \lambda_0$, there exist  $t_1 < t_2 \in (0, T_{\lambda})$ and a feedback control $v = v(t,\cdot) = \mathcal K(\psi(t,\cdot)) \in L^\infty((0,t_2);H^2(\Omega) \cap H_0^1(\Omega))$ such that the system
\begin{equation}
	\label{eq:L2CriticalNLSt2}
		\left\{
			\begin{array}{ll}
				 i \partial_t \psi + \Delta \psi = - |\psi|^{2} \psi + v 1_{\omega}  & \text{ in }  (0,t_2) \times \Omega, 
				\\
				\psi = 0 & \text{ on } (0,t_2)\times \partial \Omega, 
				\\
				\psi(0, \cdot) = \phi_{\lambda}(0, \cdot)+w_0(\cdot) & \text{ in } \Omega,
			\end{array}
		\right.
\end{equation}
with $\|w_0\|_{H^2\cap H^1_0(\Omega)} < \delta$, has a unique solution $\psi \in C([0,t_2];H^2(\Omega) \cap H_0^1(\Omega))$ satisfying
\begin{equation}
\label{eq:psipetitt2}
\|\psi(t_2,\cdot)\|_{H^2\cap H^1_0(\Omega)} \leq \varepsilon.
\end{equation}
\end{pr}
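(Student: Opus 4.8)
The plan is to design a two-phase nonlinear feedback and to run the argument first for the exact profile $\psi(0,\cdot)=\phi_\lambda(0,\cdot)$, then for the perturbed datum. Fix a cut-off $\chi\in C^\infty_0(\omega;[0,1])$ with $\chi\equiv 1$ on $\omega_1=B(x_0,r)$ and $\operatorname{supp}\chi\subset B(x_0,2r)\subset\omega$, and a large gain $\mu=\mu(\lambda,\varepsilon)>0$. I first choose $t_1<T_\lambda$ with $T_\lambda-t_1$ so small that, by \Cref{lem:R} and \Cref{lem:philambda}, the profile is strongly concentrated: $\|\phi_\lambda(t_1,\cdot)\|_{H^2(\Omega\setminus\omega_1)}$ is exponentially small while $\|\phi_\lambda(t_1,\cdot)\|_{H^2(\Omega)}$ is large but explicitly controlled. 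The feedback switches at $t_1$: on $(t_1,t_2)$ I take the \emph{nonlinearity-cancelling damping} $\mathcal K(\psi)=\chi|\psi|^2\psi-i\mu\chi\psi$, while on $(0,t_1)$ I take the \emph{tracking} version $\mathcal K(\psi)=\chi(|\psi|^2\psi-|\phi_\lambda|^2\phi_\lambda)-i\mu\chi(\psi-\phi_\lambda)$, which vanishes identically along the exact profile so that $\psi=\phi_\lambda$ on $(0,t_1)$ when $w_0=0$.

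The point of this design is that, where $\chi\equiv1$, the cubic nonlinearity is exactly removed: on $(t_1,t_2)$ the equation reads $i\partial_t\psi+\Delta\psi+i\mu\chi\psi=-(1-\chi)|\psi|^2\psi$, so the nonlinear source is supported in $\Omega\setminus\omega_1$ and the bulk of $\psi$ obeys a damped \emph{linear} Schrödinger equation. Pairing with $\bar\psi$ gives $\frac{d}{dt}\|\psi\|_{L^2}^2=-2\mu\int\chi|\psi|^2+2\,\mathrm{Re}\,i\int(1-\chi)|\psi|^4$, where the last term vanishes pointwise, hence clean $L^2$ dissipation. Applying $\Delta$ and pairing with $\overline{\Delta\psi}$, the skew-adjoint term drops, the damping yields $-2\mu\int\chi|\Delta\psi|^2\le0$, the commutators $[\Delta,\chi]$ are lower order, and the only nonlinear contribution comes from $\Delta[(1-\chi)|\psi|^2\psi]$, which is supported outside $\omega_1$ and is therefore controlled by $\|\psi\|_{H^2(\Omega\setminus\omega_1)}$ rather than by the large full $H^2$ norm.

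I would then close the estimate by a fixed point controlling $\psi$ on the exterior region. The damping drives the concentrated bulk in $\omega_1$ to size $\lesssim e^{-\mu(t-t_1)}\|\phi_\lambda(t_1,\cdot)\|_{H^2}$, while the dispersion can only transfer a small amount of mass across $\partial\omega_1$ over the short window; since $\psi$ starts exponentially small outside $\omega_1$, a contraction argument shows that $\|\psi\|_{H^2(\Omega\setminus\omega_1)}$ stays negligible throughout $(t_1,t_2)$, which in turn renders the nonlinear source harmless. Choosing $\mu$ large (of order $\|\phi_\lambda(t_1,\cdot)\|_{H^2}^2\,|\log\varepsilon|$) and then $t_2-t_1$ appropriately, a Gronwall argument yields $\|\psi(t_2,\cdot)\|_{H^2}\le\varepsilon$. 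The main obstacle is precisely this coupling between the huge but damped bulk in $\omega_1$ and the tiny exterior carrying the genuine nonlinearity; it is resolved because the feedback annihilates the cubic term exactly on $\{\chi=1\}$, so the large $H^2$ size never interacts with the nonlinearity.

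Finally, for $w_0\ne0$ I set $w=\psi-\phi_\lambda$ and use the tracking feedback on $(0,t_1)$: subtracting the profile equation, $w$ solves $i\partial_t w+\Delta w+i\mu\chi w=-(1-\chi)(|\psi|^2\psi-|\phi_\lambda|^2\phi_\lambda)$, whose right-hand side is again supported in $\Omega\setminus\omega_1$ and is of higher order in $(w,\phi_\lambda\mathds 1_{\Omega\setminus\omega_1})$. A fixed point then shows that $\|w(t,\cdot)\|_{H^2\cap H^1_0}$ remains $\lesssim\|w_0\|_{H^2\cap H^1_0}$ on $(0,t_1)$ for $\|w_0\|_{H^2\cap H^1_0}<\delta$ with $\delta$ independent of $\lambda$; in particular $\psi(t_1,\cdot)=\phi_\lambda(t_1,\cdot)+w(t_1,\cdot)$ is still concentrated in $\omega_1$ and small outside. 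Running the Phase 2 estimate from this datum gives $\|\psi(t_2,\cdot)\|_{H^2}\le\varepsilon$, while uniqueness and the regularity $\psi\in C([0,t_2];H^2\cap H^1_0)$ follow from \Cref{lm:localwpH2H01} on each subinterval together with the a priori bounds obtained above.
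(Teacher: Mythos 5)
There is a genuine gap, and it lies exactly where your design departs from what is needed. In Phase 2 you damp the \emph{unknown state} through the feedback $-i\mu\chi\psi$, so the smallness $\|\psi(t_2,\cdot)\|_{H^2}\le\varepsilon$ has to be \emph{derived} from decay properties of the semigroup generated by $i\Delta-\mu\chi$ acting on a highly concentrated, high-frequency datum. Two of your key steps do not close. First, in $H^2$ the localized damping is not dissipative: pairing $\Delta$ of your equation with $\overline{\Delta\psi}$ produces the commutator contribution
\begin{equation*}
-\mu\,\mathrm{Re}\int_{\Omega}\bigl(2\nabla\chi\cdot\nabla\psi+\psi\,\Delta\chi\bigr)\overline{\Delta\psi},
\end{equation*}
which carries the large factor $\mu$ (you need $\mu(t_2-t_1)\gg 1$, so these terms cannot be absorbed by Gronwall) and is supported in the annulus $\omega\setminus\omega_1$; controlling it requires already knowing that $\psi$ stays exponentially small there, which is precisely the leakage statement you are trying to prove --- the argument is circular. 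Calling the commutators ``lower order'' ignores the $\mu$ weight. Second, even in $L^2$ your energy identity only gives monotonicity; to conclude smallness at $t_2$ you must show the concentrated bulk is absorbed before it escapes $\{\chi=1\}$. For Schr\"odinger (infinite propagation speed), with data at spatial scale $a\sim\lambda T_\lambda^2$ evolving over a window $t_2-t_1=T_\lambda^2\gg a^2$ (since $\lambda^2T_\lambda^2\ll1$), the bump has ample time to spread, and ruling out escape is a quantitative propagation/overdamping estimate --- semiclassical in the parameter $a$ --- not a consequence of ``dispersion can only transfer a small amount of mass'' or of a contraction argument: the free (or damped) group does not preserve supports, so the restriction of $\psi$ to $\Omega\setminus\omega_1$ satisfies no closed equation with small data and small sources, and you never specify the map or the norm in which your claimed fixed point contracts. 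The same $\mu$-weighted commutator problem contaminates your Phase 1, where you added the damping $-i\mu\chi(\psi-\phi_\lambda)$ that the argument does not need.

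The paper's proof avoids all of this by making the decay definitional rather than dynamical: it posits the ansatz $\psi=e^{-\mu(t-t_1)}\phi_\lambda+\tilde w$ and chooses the feedback $\mathcal K^2_\lambda(\psi)=\chi\bigl(|\psi|^2\psi-e^{-\mu(t-t_1)}(|\phi_\lambda|^2\phi_\lambda+i\mu\phi_\lambda)\bigr)$, i.e.\ the damping is applied to the \emph{known} profile inside the control region. Then the remainder $\tilde w$ solves an equation whose every source term is multiplied by $(1-\chi)$, hence supported where $\phi_\lambda$ is exponentially small by \Cref{lem:R}; the only $\mu$-weighted term is $\mu(1-\chi)e^{-\mu(t-t_1)}\phi_\lambda$, which is harmless, and the fixed point uses nothing beyond unitarity of $S(t)$ on $H^2\cap H^1_0$ and the algebra property of $H^2$. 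The conclusion at $t_2$ is then immediate: $e^{-\mu(t_2-t_1)}\|\phi_\lambda(t_2,\cdot)\|_{H^2}$ is small because the exponential beats the polynomial blow-up rate of \Cref{lem:philambda}, and $\|\tilde w(t_2,\cdot)\|_{H^2}\le M_2<\varepsilon/2$. If you want to rescue your scheme, you would have to prove genuine resolvent/propagation estimates for $e^{t(i\Delta-\mu\chi)}$ on concentrated data; the paper's ansatz is designed precisely so that no such estimate is ever needed.
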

\begin{rmk}
From the proof of \Cref{pr:rapidstab}, the parameter $\delta>0$ can be chosen of the following form $\delta = \frac{ \varepsilon}{16}$.
\end{rmk}
\begin{rmk}
Notice that in dimension $d=2$, the non-linearity $|\psi|^2 \psi$ is algebraic, whereas in dimension $d=3$, the non-linearity $|\psi|^{4/3} \psi$ is not. This will simplify the proof in the two-dimensional case.
\end{rmk}

The proof of \Cref{pr:rapidstab} consists in choosing the control $v$ to get rapid stabilization of the function $\psi$ near the blow-up point $x_0$.

\begin{proof}
Let $t_1 < t_2 \in (0, T_{\lambda})$ to be fixed later. We split the proof in several steps.\\

\noindent \textbf{Step 1: Stabilization around the blow-up profile.} We design a first nonlinear feedback law $v_\lambda=\mathcal{K}_\lambda^1(\psi)$ such that the solution starting near the blow-up profile $\phi_\lambda$ at time $t=0$ remains near $\phi_\lambda$ at time $t=t_1$. For $t\in (0,t_1)$, let,
\begin{equation}\label{eq:defK1}
v_\lambda(t,x)=\mathcal{K}_\lambda^1(\psi(t,x))=\chi(x)( |\psi(t,x)|^2\psi(t,x) - | \phi_\lambda(t,x)|^2 \phi_\lambda(t,x)),
\end{equation}
where $\chi$ a smooth cut-off function belonging to $C^{\infty}(\Omega;[0,1])$ such that
\begin{equation}
\label{eq:defchiBis}
\chi = 
		\left\{
			\begin{array}{ll}
				  1  & \text{ in }  \omega_1, 
				\\
				0 & \text{ in } \Omega \setminus \omega.
			\end{array}
		\right.
\end{equation}
Notice that $\mathcal{K}_1(\phi_\lambda(t,x))=0$, and therefore the solution $\psi$ of \eqref{eq:L2CriticalNLSt2} starting from $\phi_\lambda(0,\cdot)$ with $v$ defined by \eqref{eq:defK1} satisfy $\psi \equiv \phi_\lambda$. To prove that solutions starting from an initial data close to $\phi_\lambda(0,\cdot)$ remain close to $\phi_\lambda$ along time, we set up a fixed-point argument by letting $\psi(t,x)=\phi_\lambda(t,x) + w(t,x)$, with $w(0,x)=w_0(x)$, where 
\begin{align*}
(i\partial_t + \Delta) w & =|\phi_\lambda|^2\phi_\lambda - |\psi|^2\psi + \mathds{1}_\omega v \\
& = (1-\chi(x))( |\phi_\lambda|^2\phi_\lambda - |\psi|^2\psi ).
\end{align*}
Let $(S(t))_{t \in \R}$ be the unitary group associated to $i\Delta$ over $H=H^2(\Omega) \cap H^1_0(\Omega)$.
First, we consider the following fixed-point argument, 
\[
\Gamma_1(w(t,\cdot))=S(t)w_0 + \int_0^t S(t-s) (1-\chi(x))( |\phi_\lambda|^2\phi_\lambda - |\psi|^2\psi )(s,\cdot)ds 
\]
in the Banach space  
\begin{equation}
\label{eq:spacefixedpoint1}
X_{M_1}:= \left\{ w\in C([0,t_1];H) \, \left| \, \|w\|_{X} \leq {M_1} \right. \right\}, \quad \textrm{where }  \|w\|_{X}:=\|w\|_{C([0,t_1];H)}.
\end{equation}

We first prove the energy estimate. Notice that $\chi$ is supported outside $\omega_1$. Therefore, there exists $C>0$ such that,
\begin{equation}
\label{eq:outsideomega1H2}
\|(1-\chi) f \|_{H^2(\Omega)}  \leq C \|f\|_{H^2(\Omega \setminus \omega_1)}\qquad \forall f \in H^2(\Omega).
\end{equation}
By using the decomposition of $\phi_{\lambda}$ in \eqref{eq:decompositionphilambda}, the estimate on $r_{\lambda}$ in \eqref{eq:estimationrlambda}, the estimate \eqref{eq:boundRLambdaOutsideomega} of $R_{\lambda}$ in Lemma \ref{lem:R}, the estimate \eqref{eq:outsideomega1H2}, we have the following sequence of estimates
\begin{align}
\left\| \Gamma_1(w(t,\cdot)) \right\|_X & \leq \delta + \left\|  \int_0^t S(t-s) (1-\chi(x))( |\phi_\lambda|^2\phi_\lambda - |\psi|^2\psi )(s,\cdot)ds   \right\|_X \notag\\ 
& \leq \delta +   \int_0^{t_1} \left\| (1-\chi(x))( |\phi_\lambda|^2\phi_\lambda - |\psi|^2\psi )(s,\cdot) \right\|_{H^2(\Omega)} ds   \notag \\
& \leq \delta +  \int_0^{t_1} \left\|  (|\phi_\lambda|^2\phi_\lambda - |\psi|^2\psi )(s,.) \right\|_{H^2(\Omega \setminus \omega_1)} ds \notag  \\
& \leq \delta +  C_1 \int_0^{t_1}  \left(\|\phi_\lambda(s,\cdot)\|_{H^2(\Omega \setminus \omega_1)}^2+ \|\psi(s,\cdot)\|_{H^2(\Omega \setminus \omega_1)}^2\right)\| w(s,\cdot)\|_{H^2(\Omega \setminus \omega_1)} ds \notag  \\
& \leq \delta +  C_1 \int_0^{t_1}  \left(e^{-\frac{\delta'}{\lambda(T_\lambda-t)}}+ \|w(s,.)\|_{H^2(\Omega \setminus \omega_1)}^2\right)\| w(s,\cdot)\|_{H^2(\Omega \setminus \omega_1)} ds \notag\\
& \leq \delta +  C_1t_1  \left(e^{-\frac{\delta'}{\lambda T_\lambda}}+ \|w(s,\cdot)\|_{X}^2\right)\| w(s,\cdot)\|_{X}\notag \\
& \leq \delta +  C_1t_1  \left(e^{-\frac{\delta'}{\lambda T_\lambda}}+M_1^2\right){M_1},\label{eq:estimateballfirstfixed}
\end{align}
with $C_1>0$ changing from line to line and $\delta'(r,\kappa)=\delta'>0$. On the other hand, denote $\psi_w =\phi_\lambda+w$ and $\psi_z = \phi_\lambda+z$ associated to $w,z \in X_{M_1}$. We have, since the term $|\phi_\lambda|^2\phi_\lambda$ cancels out,
\begin{align}
&\left\| \Gamma_1(w(t,\cdot)) \right.  \left. - \Gamma_1(z(t,\cdot)) \right\|_X\notag \\
& \leq  \left\|  \int_0^t S(t-s) (1-\chi(x))( |\psi_w|^2\psi_w  - |\psi_z|^2\psi_z )(s,\cdot)ds   \right\|_X\notag \\ 
& \leq  C_2  \int_0^{t_1} \left\| (1-\chi(x))( |\psi_w|^2\psi_w  - |\psi_z|^2\psi_z)(s,\cdot) \right\|_{H^2(\Omega)} ds \notag   \\
& \leq C_2  \int_0^{t_1} \left\|  (|\psi_w|^2\psi_w  - |\psi_z|^2\psi_z )(s,\cdot) \right\|_{H^2(\Omega \setminus \omega_1)} ds \notag  \\
& \leq C_2 \int_0^{t_1}  \left(\|\psi_w(s,\cdot)\|_{H^2(\Omega \setminus \omega_1)}^2+ \|\psi_z (s,.)\|_{H^2(\Omega \setminus \omega_1)}^2\right)\| (\psi_w - \psi_z)(s,\cdot)\|_{H^2(\Omega \setminus \omega_1)} ds \notag  \\
& \leq C_2 \int_0^{t_1}  \left(\|(\phi_\lambda + w)(s,\cdot)\|_{H^2(\Omega \setminus \omega_1)}^2+ \|(\phi_\lambda+z) (s,\cdot)\|_{H^2(\Omega \setminus \omega_1)}^2\right)\| (w - z)(s,\cdot)\|_{H^2(\Omega \setminus \omega_1)} ds \notag  \\
& \leq C_2 \int_0^{t_1}  \left(2\|\phi_\lambda(s,\cdot)\|_{H^2(\Omega \setminus \omega_1)}^2 +  \|w(s,\cdot)\|_{H^2(\Omega \setminus \omega_1)}^2+ \|z (s,\cdot)\|_{H^2(\Omega \setminus \omega_1)}^2\right)\| (w - z)(s,\cdot)\|_{H^2(\Omega \setminus \omega_1)} ds \notag  \\
& \leq  C_2 t_1  \left(2e^{-\frac{\delta'}{\lambda T_\lambda }}+ \|w \|_{X}^2 + \|z \|_{X}^2 \right)\| w - z \|_{X} \notag\\
& \leq  C_2 t_1  \left(e^{-\frac{\delta'}{\lambda T_\lambda }}+ M_1^2 \right)\| w - z \|_{X}, \label{eq:estimatecontractionfirstfixed}
\end{align}
with $C_2>0$ changing from line to line. From \Cref{thm:mainresultgod}, we have that there exists $c>0$, independent of $\lambda$, such that
\begin{equation}\label{eq:Tlambda}
T_{\lambda} \in (0, c \lambda^{-2}).
\end{equation}
We can therefore consider $\lambda>0$ sufficiently large so that $T_\lambda < 1/4 $. Let us fix
\begin{equation}
\label{eq:deft1}
t_1(\lambda)=t_1:=T_\lambda (1-2T_\lambda). 
\end{equation}

\begin{cl} 
\label{cl:firstfixedpoint}
There exists $\lambda_0>0$ sufficiently large such that for 
\begin{equation}\label{eq:defM1}
M_1 \in \left(0, \left( \dfrac{1 }{4Cc\lambda_0^{-2}(1-2c\lambda_0^{-2})}\right)^{1/2}\right],
\end{equation}
with $C=\max(C_1,C_2)$, we have
\begin{equation}
\label{eq:estimationsfirstfixedpoint}
t_1  \left(e^{-\frac{\delta'}{\lambda T_\lambda }}+ M_1^2 \right)< \dfrac{1}{2C}.
\end{equation}
\end{cl}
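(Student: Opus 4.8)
The plan is to bound the two summands $t_1 e^{-\delta'/(\lambda T_\lambda)}$ and $t_1 M_1^2$ separately, showing that each is strictly smaller than $\tfrac{1}{4C}$ once $\lambda_0$ is taken large enough and $M_1$ lies in the range \eqref{eq:defM1}, and then add the two bounds. The whole argument is elementary calculus on the explicit expression \eqref{eq:deft1} for $t_1$, combined with the scaling \eqref{eq:Tlambda} for $T_\lambda$.

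First I would record a clean upper bound for $t_1$ that is uniform in $\lambda \ge \lambda_0$. Writing $g(x) = x(1-2x)$, one has $t_1 = g(T_\lambda)$ by \eqref{eq:deft1}, and $g'(x) = 1-4x > 0$ for $x < 1/4$, so $g$ is strictly increasing on $[0,1/4]$. Choosing $\lambda_0$ large enough that $c\lambda_0^{-2} < 1/4$ guarantees, via \eqref{eq:Tlambda}, that $T_\lambda < c\lambda^{-2} \le c\lambda_0^{-2} < 1/4$ for all $\lambda \ge \lambda_0$, whence
\[
t_1 = g(T_\lambda) < g(c\lambda_0^{-2}) = c\lambda_0^{-2}\bigl(1 - 2c\lambda_0^{-2}\bigr) =: A.
\]
Note that $A > 0$, so the interval in \eqref{eq:defM1} is nonempty, and it is precisely $A$ that appears in the upper endpoint of that interval.

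Next I would treat the polynomial term. For any $M_1$ in the range \eqref{eq:defM1} one has $M_1^2 \le \tfrac{1}{4CA}$, and combining with $t_1 < A$ gives $t_1 M_1^2 < A M_1^2 \le \tfrac{1}{4C}$, uniformly in $\lambda \ge \lambda_0$ and in the admissible $M_1$. For the exponential term I would use the scaling $\lambda T_\lambda \le c\lambda^{-1}$ from \eqref{eq:Tlambda}, which yields $\tfrac{\delta'}{\lambda T_\lambda} \ge \tfrac{\delta'\lambda}{c}$, hence $e^{-\delta'/(\lambda T_\lambda)} \le e^{-\delta'\lambda/c}$ and therefore $t_1 e^{-\delta'/(\lambda T_\lambda)} \le c\lambda^{-2} e^{-\delta'\lambda/c}$. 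The right-hand side tends to $0$ as $\lambda \to +\infty$ and is eventually decreasing in $\lambda$, so, after possibly enlarging $\lambda_0$, I can ensure $c\lambda^{-2} e^{-\delta'\lambda/c} < \tfrac{1}{4C}$ for every $\lambda \ge \lambda_0$. Adding the two estimates gives $t_1\bigl(e^{-\delta'/(\lambda T_\lambda)} + M_1^2\bigr) < \tfrac{1}{4C} + \tfrac{1}{4C} = \tfrac{1}{2C}$, which is \eqref{eq:estimationsfirstfixedpoint}.

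The delicate point — and the only one worth dwelling on — is the uniformity in $\lambda$. Since the admissible range of $M_1$ in \eqref{eq:defM1} is defined through $\lambda_0$ (via the constant $A$) rather than through the running parameter $\lambda$, it is essential that the single upper bound $t_1 < A$ hold for all $\lambda \ge \lambda_0$ simultaneously; this is exactly what the monotonicity of $g$ on $[0,1/4]$ delivers, and it is why one first forces $c\lambda_0^{-2} < 1/4$. The remaining subtlety is merely bookkeeping: the choice of $\lambda_0$ must meet two constraints at once, namely $c\lambda_0^{-2} < 1/4$ (used for the $t_1$ bound) and the smallness of $c\lambda^{-2}e^{-\delta'\lambda/c}$ (used for the exponential term), so I would simply take $\lambda_0$ to be the larger of the two thresholds.
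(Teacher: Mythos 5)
Your proof is correct and follows essentially the same route as the paper's: bound $t_1\,e^{-\delta'/(\lambda T_\lambda)}$ and $t_1 M_1^2$ separately by $\tfrac{1}{4C}$, using the monotonicity of $x\mapsto x(1-2x)$ on $(0,1/4)$ together with $T_\lambda < c\lambda^{-2}\le c\lambda_0^{-2}$ for the polynomial term, and the decay of $c\lambda^{-2}e^{-\delta'\lambda/c}$ in $\lambda$ for the exponential term (the paper phrases this last step via the monotonicity of $x\mapsto xe^{-\delta'/x}$ applied at $x=\lambda T_\lambda$, but it lands on the same bound $c\lambda_0^{-2}e^{-\delta'\lambda_0/c}<\tfrac{1}{4C}$). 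Your explicit attention to the uniformity in $\lambda\ge\lambda_0$ of the bound $t_1<A$ is exactly the point the paper's proof relies on as well.
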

\begin{proof}[Proof of \Cref{cl:firstfixedpoint}]
Indeed, we have, using $\lambda_0<\lambda$ and $T_\lambda<1/2$,
\begin{align*}
t_1  e^{-\frac{\delta'}{\lambda T_\lambda }} = T_\lambda (1-2T_\lambda)  e^{-\frac{\delta'}{\lambda T_\lambda }} < T_\lambda e^{-\frac{\delta'}{\lambda T_\lambda }} < \dfrac{ \lambda T_{\lambda}}{\lambda_0} e^{-\frac{\delta'}{\lambda T_{\lambda} }}.
\end{align*}
Since $x \in (0,\infty) \mapsto xe^{-\delta'/x}$ is increasing, and since $\lambda T_\lambda < c\lambda^{-1} < c\lambda_0^{-1}$ from the bound \eqref{eq:Tlambda}, we deduce for $\lambda_0>0$ large enough,
\begin{align*}
t_1  e^{-\frac{\delta'}{\lambda T_\lambda }} < \dfrac{ \lambda T_{\lambda}}{\lambda_0} e^{-\frac{\delta'}{\lambda T_{\lambda} }} <\dfrac{c}{\lambda_0^2} e^{-\frac{ \lambda_0 \delta'}{c }} < \dfrac{1}{4C}.
\end{align*}
Moreover, from the definition of $t_1$ in \eqref{eq:deft1}, since $x\in (0,1/4) \mapsto x(1-2x)$ and $T_\lambda<c\lambda^{-2}<c\lambda_0^{-2}<1/4$ for $\lambda_0$ sufficiently large, we have
\[
t_1<c\lambda_0^{-2}(1-2c\lambda_0^{-2}),
\]
and thus the choice \eqref{eq:defM1} leads to \eqref{eq:estimationsfirstfixedpoint}.
\end{proof}

By defining 
\begin{equation}\label{def:delta}
\delta:=M_1/2,
\end{equation}
and using \eqref{eq:estimateballfirstfixed}, \eqref{eq:estimatecontractionfirstfixed} and \eqref{eq:estimationsfirstfixedpoint}, we are able to see that $\Gamma_1$ maps $X_{M_1}$ to itself, and that $\Gamma_1$ is a contraction over $X_{M_1}$. Therefore by Banach's fixed point theorem, $\Gamma_1
$ admits a fixed-point in $X_{M_1}$. At the end of this step, $M_1$ is still not fixed but chosen in the interval of \eqref{eq:defM1} to guarantee that $\Gamma_1$ has a unique fixed-point. \\

\noindent \textbf{Step 2: Ansatz and feedback control.} We look for a solution $\psi$ to \eqref{eq:L2CriticalNLSt2} on $(t_1, t_2)$, with a control $v$ to be determined later, of the form
\begin{equation}
\label{eq:ansatz}
\psi(t,x)=e^{-\mu (t-t_1)}\phi_\lambda(t,x) + \tilde{w}(t,x)\qquad (t, x) \in (t_1, t_2) \times \Omega,
\end{equation}
where $\mu=\mu(\lambda)>2$ is a parameter to be determined later. Since $\phi_\lambda$ belongs to $C([0,T_\lambda);H^2(\Omega)\cap H^1_0(\Omega))$, so does $e^{-\mu (t-t_1)}\phi_\lambda$. Moreover, since $e^{-\mu (t-t_1)}\phi_\lambda |_{t=t_1}=\phi_\lambda(t_1,\cdot)$, we look for $\tilde{w} \in C([t_1,t_2];H^2(\Omega)\cap  H^1_0(\Omega))$ such that the continuity with the solution from step 1 is ensured, that is, 
\[
\tilde{w}(t_1,\cdot)=w(t_1,\cdot), \text{ in } H^2(\Omega)\cap  H^1_0(\Omega)
\]
where $w$ was obtained from a fixed-point argument on $\Gamma_1$. Therefore, 
\[
\|w(t_1,\cdot)\|_{H^2\cap H^1_0(\Omega)} = \|\tilde{w}(t_1,\cdot)\|_{H^2\cap H^1_0(\Omega)} \leq M_1. 
\]
Since $\phi_\lambda$ satisfies \eqref{eq:L2CriticalNLS}, we obtain
\begin{align*}
(i\partial_t + \Delta)[e^{-\mu (t-t_1)} \phi_\lambda] & =-i\mu e^{-\mu (t-t_1)} \phi_\lambda +  e^{-\mu (t-t_1)} (i \partial_t + \Delta) \phi_\lambda \\
& =-i\mu e^{-\mu (t-t_1)} \phi_\lambda -  e^{-\mu (t-t_1)} | \phi_\lambda |^{2} \phi_\lambda\qquad \text{in}\ (t_1, t_2) \times \Omega.
\end{align*}
Therefore, we set the second nonlinear feedback law $\mathcal{K}_\lambda^2$ of the state $\psi$
\begin{equation}
\label{eq:defcontrolfeedback}
v(t,x)=\mathcal{K}_\lambda^2(\psi(t,x))=\chi(x)\left( |\psi|^{2}\psi - e^{-\mu (t-t_1)} \left( |\phi_\lambda|^{2} \phi_\lambda + i \mu \phi_\lambda \right)\right) \qquad \text{in}\ (t_1, t_2) \times \Omega,
\end{equation}
where $\chi$ is smooth cut-off function defined in \eqref{eq:defchi}. Injecting the ansatz \eqref{eq:ansatz} in \eqref{eq:L2CriticalNLSControl} with the nonlinear feedback control \eqref{eq:defcontrolfeedback} gives, 
\begin{align}
\label{eq:w}
(i\partial_t+\Delta )\tilde{w} =& -(1-\chi) \left( |\psi|^{2}\psi - e^{-\mu (t-t_1)} \left( |\phi_\lambda|^{2} \phi_\lambda + i \mu \phi_\lambda \right)\right) \nonumber \\
 =& -(1-\chi) \left( |\psi|^{2}\psi - e^{-3\mu(t-t_1)} |\phi_\lambda|^{2} \phi_\lambda   \right.  \nonumber \\
 & \qquad    \left.  + e^{-3\mu(t-t_1)} |\phi_\lambda|^{2} \phi_\lambda - e^{-\mu (t-t_1)} \left( |\phi_\lambda|^{2} \phi_\lambda + i \mu \phi_\lambda \right)\right), \quad \text{in}\ (t_1, t_2) \times \Omega.
\end{align}
Defining,
\begin{align}
\label{eq:defN1}
N_1(\phi_\lambda) & := -(1-\chi) \left( e^{-3\mu(t-t_1)} |\phi_\lambda|^{2} \phi_\lambda  - e^{-\mu (t-t_1)} \left( |\phi_\lambda|^{2} \phi_\lambda + i \mu \phi_\lambda \right)\right),    \\
\label{eq:defN2} N_2(\tilde{w},\phi_\lambda) & :=-(1-\chi) \left( |\psi|^{2}\psi - e^{-3\mu(t-t_1)} |\phi_\lambda|^{2} \phi_\lambda \right),
\end{align}
we then obtain that $\tilde{w}$ satisfies,
\begin{equation}
\label{eq:wN1N2}
(i\partial_t+\Delta )\tilde{w} = N_1(\phi_\lambda) + N_2(\tilde{w},\phi_\lambda), \quad \text{in}\ (t_1, t_2) \times \Omega.
\end{equation}

\textbf{Step 3: A second fixed-point argument.} The goal of this part consists in proving that \eqref{eq:wN1N2} admits a unique solution $\tilde{w}\in C([t_1,t_2];H^2(\Omega) \cap H^1_0 (\Omega))$. Let $(S(t))_{t \in \R}$ be the unitary group associated to $i\Delta$ over $H=H^2(\Omega) \cap H^1_0(\Omega)$. We look for $\tilde{w}$ of the form,  
\begin{equation}
\label{eq:seekw}
\tilde{w}(t, \cdot)=S(t-t_1)w(t_1,\cdot) + \int_{t_1}^{t} S(t-\tau)(N_1(\phi_\lambda)(\tau)+N_2(\tilde{w},\phi_\lambda)(\tau)) d\tau, \quad t\in  [t_1,t_2].
\end{equation}
To prove the existence and uniqueness of $\tilde{w}$, we proceed with a fixed-point argument. Notice that in dimension $d=2$, the non-linearity $|\psi|^2 \psi$ is algebraic, whereas in dimension $d=3$, the non-linearity $|\psi|^{4/3} \psi$ is not. This will simplify the proof in the two-dimensional case. We aim at proving that
\begin{equation}
\label{eq:defGamma}
\Gamma_2(\tilde{w}(t, \cdot))=S(t-t_1)w(t_1,\cdot) +\int_{t_1}^{t} S(t-\tau)(N_1(\phi_\lambda)(\tau)+N_2(\tilde{w},\phi_\lambda)(\tau)) d\tau, \quad t\in  [t_1,t_2].
\end{equation}
has an unique fixed-point in the Banach space 
\begin{equation}
\label{eq:spacefixedpoint}
X_{M_2}:= \left\{ \tilde{w} \in C([t_1,t_2];H) \, \left| \, \|\tilde{w}\|_{X} \leq M_2 \right. \right\}, \quad \textrm{where }  \|\tilde{w}\|_{X}:=\|\tilde{w}\|_{C([t_1,t_2];H)}.
\end{equation}
Notice that compared to \cite{BGT03, God11}, we do not need a exponential weight in time in the definition of $\|\cdot\|_X$, since we do not need $\tilde{w}(t,\cdot) \rightarrow 0$ as $t \rightarrow T_\lambda^{-}$, where $T_\lambda$ is the blow-up time. 

We can now proceed to estimate $N_1$ and $N_2$. First, we estimate $N_1$ defined in \eqref{eq:defN1}. By using the decomposition of $\phi_{\lambda}$ in \eqref{eq:decompositionphilambda}, the estimate on $r_{\lambda}$ in \eqref{eq:estimationrlambda}, the estimate \eqref{eq:boundRLambdaOutsideomega} of $R_{\lambda}$ in Lemma \ref{lem:R}, the estimate \eqref{eq:outsideomega1H2}, we have the following sequence of estimates
\begin{align}
\left\| \int_{t_1}^{t} S(t-\tau)N_1(\phi_\lambda)(\tau) d\tau   \right\|_{X} \leq &  \int_{t_1}^{t_2} \| N_1(\phi_\lambda)(\tau) \|_{H^2(\Omega)} d\tau \notag\\
\leq &  \hat{C}_1 \int_{t_1}^{t_2} \| N_1(\phi_\lambda)(\tau) \|_{H^2(\Omega \setminus \omega_1)} d\tau  \notag \\
\leq & \hat{C}_1 (1+\mu) \int_{t_1}^{t_2} \|\phi_\lambda(\tau)\|_{H^2(\Omega \setminus \omega_1)}+ \|\phi_\lambda(\tau)\|_{H^2(\Omega \setminus \omega_1)}^{3} d\tau \notag \\
\leq  & \hat{C}_1 (1+\mu) \int_{t_1}^{t_2} e^{- \frac{\delta'}{\lambda(T_\lambda - \tau)}} d \tau  \notag\\ 
\leq  & \hat{C}_1 (1+\mu)(t_2-t_1) e^{- \frac{\delta'}{\lambda(T_\lambda - t_1)}},  \qquad \forall t \in (t_1,t_2),
\label{eq:estimateN1}
\end{align}
for some positive constant $\hat{C}_1>0$ changing from line to line and $\delta'=\delta'(r, \kappa)>0$.

To estimate $N_2$ defined in \eqref{eq:defN2}, we write
\begin{align*}
|\psi|^2 \psi - e^{-3 \mu (t-t_1)} |\phi_{\lambda}|^2 \phi_{\lambda} =& |\tilde{w}|^2 \tilde{w} + 2 e^{-\mu (t-t_1)} |\tilde{w}|^2 \phi_{\lambda} + e^{-2 \mu(t-t_1) }\overline{\tilde{w}} \phi_{\lambda}^2 \\
&+ 2 e^{-2\mu (t-t_1)} |\phi_{\lambda}|^2 \tilde{w} + e^{-\mu (t-t_1)} \overline{\phi_{\lambda}} \tilde{w}^2.
\end{align*}
This gives, by Young's inequality and by using that $H^2(\Omega)$ is an algebra,
\begin{align}
&\left\| \int_{t_1}^{t} S(t-\tau)N_2(\tilde{w},\phi_\lambda)(\tau) d\tau   \right\|_{X} \notag  \\
& \leq  \hat{C}_2 \int_{t_1}^{t_2} \| N_2(\tilde{w},\phi_\lambda)(\tau) \|_{H^2(\Omega \setminus \omega_1)}  d\tau \notag\\
& \leq \hat{C}_2 \int_{t_1}^{t_2} \left( \| \tilde{w}(\tau, \cdot) \|_{H^2(\Omega \setminus \omega_1)}^2 +  \| \phi_\lambda(\tau,\cdot) \|_{H^2(\Omega \setminus \omega_1)}^2 \right) \| \tilde{w}(\tau, \cdot) \|_{H^2(\Omega \setminus \omega_1)}   d\tau \notag \\
 & \leq \hat{C}_2(t_2-t_1) (M_2^2 + e^{- \frac{ \delta'}{\lambda(T_\lambda - t_1)}} ) M_2 \qquad \forall t \in (t_1,t_2). \label{eq:estimateN2}
\end{align}
for some positive constant $\hat{C}_2>0$ changing from line to line. By \eqref{eq:defGamma}, \eqref{eq:estimateN1} and \eqref{eq:estimateN2}, we have that for any $\tilde{w} \in X_{M_2}$,
\begin{align}\label{eq:GammaSameBall}
\| \Gamma_2(\tilde{w})\|_X \leq M_1 + \tilde{C}_1(t_2-t_1) \left( (1+\mu)e^{- \frac{\delta'}{\lambda(T_\lambda - t_1)}} +   (M_2^2 + e^{- \frac{ \delta'}{\lambda(T_\lambda - t_1)}} ) M_2 \right),
\end{align}
with $\tilde{C}_1=\max\{\hat{C}_1,\hat{C}_2\}$.

The contraction estimates are done the following way. Denote $\psi_{\tilde{w}} = \phi_\lambda+\tilde{w}$ and $\psi_{\tilde{z}} = \phi_\lambda+\tilde{z}$ associated to $\tilde{w},\tilde{z}\in X_{M_2}$ respectively. For any  $\tilde{w},\tilde{z}\in X_{M_2}$, we have by the Sobolev embedding $H^2(\Omega \setminus \omega_1)\hookrightarrow L^\infty(\Omega \setminus \omega_1)$,
\begin{align}
\|\Gamma_2({\tilde{w}} ) - \Gamma_2({\tilde{z}} )\|_{X} & \leq \int_{t_1}^{t_2} \| (1-\chi)(|\psi_{\tilde{w}} |^{2}\psi_{\tilde{w}}  - |\psi_{\tilde{z}} |^{2}\psi_{\tilde{z}} ) \|_{H^2(\Omega \setminus \omega_1)} d \tau \notag \\
& \leq \tilde{C}_2 \int_{t_1}^{t_2} \| |\psi_{\tilde{w}} |^{2}\psi_{\tilde{w}}  - |\psi_{\tilde{z}} |^{2}\psi_{\tilde{z}}  \|_{H^2(\Omega \setminus \omega_1)} d \tau \notag\\
& \leq \tilde{C}_2 \int_{t_1}^{t_2} \| |\phi_\lambda+\tilde{w}|^{2}(\phi_\lambda+\tilde{w}) - |\phi_\lambda+\tilde{z} |^{2}(\phi_\lambda+\tilde{z})  \|_{H^2(\Omega \setminus \omega_1)} d \tau\notag \\
& \leq \tilde{C}_2 \int_{t_1}^{t_2} \left( \| \phi_\lambda + {\tilde{w}}  \|_{H^2(\Omega \setminus \omega_1)}^{2} +  \| \phi_\lambda + {\tilde{z}} \|_{H^2(\Omega \setminus \omega_1)}^{2} \right) \|{\tilde{w}}  - {\tilde{z}} \|_{H^2(\Omega \setminus \omega_1)} d \tau\notag \\
& \leq \tilde{C}_2 \int_{t_1}^{t_2} \left( \| \phi_\lambda \|_{H^2(\Omega \setminus \omega_1)}^{2} + \|{\tilde{w}}  \|_{H^2(\Omega \setminus \omega_1)}^{2} + \|{\tilde{z}} \|_{H^2(\Omega \setminus \omega_1)}^{2} \right) \|{\tilde{w}}  - {\tilde{z}} \|_{H^2(\Omega \setminus \omega_1)} d \tau\notag \\
\|\Gamma_2({\tilde{w}} ) - \Gamma_2({\tilde{z}} )\|_{X} & \leq \tilde{C}_2(t_2-t_1) \left( e^{- \frac{ \delta'}{\lambda(T_\lambda - t_1)}}+ 2M_2^{2} \right) \|{\tilde{w}} -{\tilde{z}} \|_{X},
\end{align}
with $\tilde{C}_2>0$ a constant changing from line to line. 

Let us now fix
\begin{equation}
\label{eq:deft1t2}
t_2 = T_{\lambda}(1-T_{\lambda})
\end{equation}
and
\begin{equation}
\label{eq:defmut1t2}
\mu(t_2-t_1) = \frac{1}{\lambda(T_{\lambda}-t_2)}=\frac{1}{\lambda T_{\lambda}^2}.
\end{equation}
\begin{cl}
\label{cl:secondfixedpoint}
Let 
\begin{equation}
\label{eq:defM}
M_2 = \left(\frac{\lambda_0^4}{4\tilde{C}c^2}\right)^{\frac{1}{2}},
\end{equation}
and 
\begin{equation}
\label{eq:defM1Bis}
M_1 = \min\left( \left( \dfrac{1 }{4Cc\lambda_0^{-2}(1-2c\lambda_0^{-2})}\right)^{1/2} , \frac{M_2}{4}\right),
\end{equation}
where $\tilde{C}=\max\{\tilde{C}_1,\tilde{C}_2\}$ and $c>0$ is the constant appearing in \eqref{eq:Tlambda}. Then we have
\begin{equation}\label{bound:gamma2}
\|\Gamma_2(\tilde{w})\|_X \leq M_2,\qquad \forall \tilde{w}\in X_{M_2},
\end{equation}
and
\begin{equation}
\label{eq:contractionprop}
\|\Gamma_2({\tilde{w}} ) - \Gamma_2({\tilde{z}} )\|_{X} \leq  \frac{3}{4} \|{\tilde{w}} -{\tilde{z}} \|_{X}\qquad \forall \tilde{w},\ \tilde{z} \in X_{M_2}.
\end{equation}
Furthermore, $M_1$ satisfies \eqref{eq:defM1}, allowing to ensure that both $\Gamma_1$ and $\Gamma_2$ admit a fixed point in $X_{M_1}$ and $X_{M_2}$ respectively.  
\end{cl}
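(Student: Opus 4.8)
The plan is to substitute the explicit choices \eqref{eq:deft1}, \eqref{eq:deft1t2} and \eqref{eq:defmut1t2} into the two estimates already derived, namely the self-map bound \eqref{eq:GammaSameBall} and the contraction bound preceding it, and then to verify \eqref{bound:gamma2} and \eqref{eq:contractionprop} term by term. First I would record the elementary identities following from \eqref{eq:deft1} and \eqref{eq:deft1t2}: one has $t_2-t_1 = T_\lambda^2$, $T_\lambda - t_1 = 2T_\lambda^2$ and $T_\lambda - t_2 = T_\lambda^2$, whence by \eqref{eq:defmut1t2} the feedback gain is $\mu = 1/(\lambda T_\lambda^4)$ and the exponential weight appearing in \eqref{eq:estimateN1}--\eqref{eq:estimateN2} reads $e^{-\delta'/(\lambda(T_\lambda-t_1))} = e^{-\delta'/(2\lambda T_\lambda^2)}$.

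For the self-map property I would bound the contributions on the right-hand side of \eqref{eq:GammaSameBall} separately, aiming for $M_2/4$ each. The leading term $M_1$ is already $\le M_2/4$ by the definition \eqref{eq:defM1Bis}. The cubic term $\tilde C_1 (t_2-t_1) M_2^3$ is controlled using $t_2-t_1 = T_\lambda^2 \le c^2\lambda^{-4} \le c^2\lambda_0^{-4}$ together with the definition \eqref{eq:defM}, which is calibrated precisely so that $\tilde C (t_2-t_1) M_2^2 \le 1/4$ for $\lambda \ge \lambda_0$; this is the point where \eqref{eq:defM} is forced. The remaining two contributions carry the exponential factor, and the genuinely delicate one is $\tilde C_1 (1+\mu)(t_2-t_1) e^{-\delta'/(2\lambda T_\lambda^2)} = \tilde C_1 \big(T_\lambda^2 + 1/(\lambda T_\lambda^2)\big) e^{-\delta'/(2\lambda T_\lambda^2)}$, where the large gain $1/(\lambda T_\lambda^2)$ competes with the exponentially small profile.

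The main obstacle is exactly this competition, and the plan is to resolve it by setting $s := \lambda T_\lambda^2$, which by \eqref{eq:Tlambda} satisfies $0 < s < c^2 \lambda^{-3} \le c^2 \lambda_0^{-3}$, so that the offending quantity becomes $\tilde C_1 (s/\lambda + 1/s)e^{-\delta'/(2s)}$. Since $s\mapsto s^{-1}e^{-\delta'/(2s)}$ tends to $0$ as $s\to 0^+$ (the exponential beating the pole), this whole term tends to $0$ as $\lambda_0 \to \infty$, uniformly in $\lambda \ge \lambda_0$ and in the admissible values of $T_\lambda$. Meanwhile $M_2 = (\lambda_0^4/4\tilde C c^2)^{1/2}$ grows like $\lambda_0^2$, so for $\lambda_0$ large enough this term and the last one, $\tilde C_1(t_2-t_1)e^{-\delta'/(2\lambda T_\lambda^2)}M_2 \le \tilde C_1 T_\lambda^2 M_2$, both fall below $M_2/4$; summing the four bounds yields \eqref{bound:gamma2}.

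Finally, for \eqref{eq:contractionprop} I would insert the same identities into the contraction estimate, giving the coefficient $\tilde C_2 T_\lambda^2\big(e^{-\delta'/(2\lambda T_\lambda^2)} + 2M_2^2\big) \le \tilde C T_\lambda^2 + 2\tilde C T_\lambda^2 M_2^2$. The second piece is $\le 1/2$ by the very choice \eqref{eq:defM} together with $\lambda \ge \lambda_0$, exactly as for the cubic term above, while the first piece $\tilde C T_\lambda^2 \le \tilde C c^2 \lambda_0^{-4}$ is $\le 1/4$ for $\lambda_0$ large; hence the coefficient is $\le 3/4$. The closing assertion that $M_1$ satisfies \eqref{eq:defM1} is immediate, since \eqref{eq:defM1Bis} defines $M_1$ as a minimum whose first argument is exactly the right endpoint of the interval in \eqref{eq:defM1}. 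Throughout, the single enlargement of $\lambda_0$ must be taken large enough to validate all the ``for $\lambda_0$ large'' steps at once, which is harmless as there are only finitely many of them.
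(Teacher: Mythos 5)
Your proposal is correct and takes essentially the same route as the paper: substitute $t_2-t_1=T_\lambda^2$, $T_\lambda-t_1=2T_\lambda^2$ and the choice of $\mu$ into \eqref{eq:GammaSameBall} and the contraction estimate, use the calibration $\tilde{C}(t_2-t_1)M_2^2\le 1/4$ forced by \eqref{eq:defM}, and take $\lambda_0$ large so that, with $s=\lambda T_\lambda^2\le c^2\lambda_0^{-3}$, the quantity $s^{-1}e^{-\delta'/(2s)}$ drops below $M_2/4$ while $M_2\sim\lambda_0^2$ grows. Incidentally, your identification of the delicate term as $\tilde{C}_1\bigl(T_\lambda^2+\tfrac{1}{\lambda T_\lambda^2}\bigr)e^{-\delta'/(2\lambda T_\lambda^2)}$ (i.e. $\mu=1/(\lambda T_\lambda^4)$) is slightly more careful than the paper's intermediate display, which writes it as $\tilde{C}T_\lambda^2\bigl(1+\tfrac{1}{\lambda T_\lambda^2}\bigr)e^{-\delta'/(2\lambda T_\lambda^2)}$, but both are dominated by $\tfrac{2\tilde{C}}{\lambda T_\lambda^2}e^{-\delta'/(2\lambda T_\lambda^2)}$, so the conclusion is unaffected.
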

\begin{proof}[Proof of \Cref{cl:secondfixedpoint}]
Let us first prove that, with these choices, the estimate \eqref{eq:GammaSameBall} implies indeed \eqref{bound:gamma2}. First, by the definition of $M_1$ given by \eqref{eq:defM1Bis},
\begin{equation}\label{eq:M1pluspetitM2}
M_1 \leq \dfrac{M_2}{4}.
\end{equation}
Moreover, using \eqref{eq:Tlambda}, \eqref{eq:deft1}, \eqref{eq:deft1t2},  \eqref{eq:defmut1t2} and, since $T_\lambda^2<\lambda T_\lambda^2 < c^2\lambda^{-3}<c^2\lambda_0^{-3}<1$ for $1<\lambda_0<\lambda$ with $\lambda_0$ sufficiently large, 
\begin{multline*}
\tilde{C}_1 (t_2-t_1)(1+\mu)e^{- \frac{\delta'}{\lambda(T_\lambda - t_1)}} 
  =  \tilde{C} T_\lambda^2 \left(1+\dfrac{1}{\lambda T_\lambda^2}\right)e^{- \frac{\delta'}{2\lambda T_\lambda^2}} 
 \\  \leq  \tilde{C} \left(1+\dfrac{1}{\lambda T_\lambda^2}\right)e^{- \frac{\delta'}{2\lambda T_\lambda^2}}
  \leq  \dfrac{2\tilde{C} }{\lambda T_\lambda^2}e^{- \frac{\delta'}{2\lambda T_\lambda^2}}
  =  \dfrac{4\tilde{C} }{2\lambda T_\lambda^2}e^{- \frac{\delta'}{2\lambda T_\lambda^2}} 
 \leq \frac{4\tilde{C}\lambda_0^3}{2c^2} e^{- \frac{\lambda_0^3 \delta'}{2c^2}},
\end{multline*}
where we have used the fact that $x\in (0,m) \mapsto e^{-\delta'/2x}/2x$ is increasing for $m>0$ small enough and $\lambda T_\lambda^2< c^2/\lambda^3 <  c^2/\lambda_0^3<m$ for $\lambda_0$ sufficiently large. Thus, using $\lambda_0$ sufficiently large, 
\begin{equation}\label{estim:g21}
\tilde{C}_1 (t_2-t_1)(1+\mu)e^{- \frac{\delta'}{\lambda(T_\lambda - t_1)}} \leq \frac{4\tilde{C}\lambda_0^3}{2c^2} e^{- \frac{\lambda_0^3 \delta'}{2c^2}} \leq \dfrac{1}{4}\left(\frac{\lambda_0^4}{4Cc^2}\right)^{\frac{1}{2}}= \dfrac{M_2}{4}. 
\end{equation}
We also have, by using \eqref{eq:defM}, the fact that $x \in (0,\infty) \mapsto e^{-1/x}$ is increasing and \eqref{eq:Tlambda}, for $\lambda_0$ sufficiently large,
\begin{multline*}
\tilde{C}_1(t_2-t_1)(M^2_2 +e^{- \frac{\delta'}{\lambda(T_\lambda - t_1)}}) M_2  = \tilde{C}T_\lambda^2\left( \dfrac{\lambda_0^4}{4\tilde{C}c^2} +e^{- \frac{\delta'}{2\lambda T_\lambda^2 }}\right) M_2 \\
 \leq  \dfrac{\tilde{C}c^2}{\lambda_0^4}  \left( \dfrac{\lambda_0^4}{4\tilde{C}c^2} +e^{- \frac{\lambda_0^3 \delta'}{2c^2}} \right) M_2 
\leq \left( \dfrac{1}{4} +\dfrac{\tilde{C}c^2}{\lambda_0^4} e^{- \frac{\lambda_0^3 \delta'}{2c^2}} \right) M_2 
 \leq \dfrac{M_2}{2}.
\end{multline*}
Collecting this previous estimate with \eqref{eq:M1pluspetitM2} and \eqref{estim:g21}, recalling \eqref{eq:GammaSameBall}, we then obtain that \eqref{bound:gamma2} holds. 

The proof of \eqref{eq:contractionprop} goes as follows
\begin{multline*}
\tilde{C}_2(t_2-t_1) \left( e^{- \frac{ \delta' }{\lambda(T_\lambda - t_1)}}+ 2M_2^{2} \right)  =  \tilde{C}T_\lambda^2 \left( e^{- \frac{ \delta' }{2\lambda T_\lambda^2 }}+ 2\dfrac{\lambda_0^4}{4\tilde{C}c^2} \right)\\
 \leq  \dfrac{\tilde{C}c^2}{\lambda_0^4} \left( e^{- \frac{\lambda_0^3 \delta'}{2c^2}}+ \dfrac{\lambda_0^4}{2\tilde{C}c^2} \right)=  \dfrac{\tilde{C}c^2}{\lambda_0^4}  e^{- \frac{\lambda_0^3 \delta'}{2c^2}}+\dfrac{1}{2} \leq \frac{3}{4},
\end{multline*}
for $\lambda_0$ sufficiently large. This conludes the proof of the claim.
\end{proof}

\textbf{Conclusion of Steps 1, 2, 3.} We have proved that $\Gamma_2$ has a unique fixed-point in $X_{M_2}$, and that this solution comes from a previous fixed point $\Gamma_1$ in $X_{M_1}$. From now on, we call $w$ the fixed point from $\Gamma_1$ and $\Gamma_2$ and fix $\psi$ as defined in \eqref{eq:ansatz}. At the end of this step, we then have constructed a solution $\psi \in C([0,t_2];H^2(\Omega) \cap H_0^1(\Omega))$ of \eqref{eq:L2CriticalNLSt2} as the time contatenation of the two solutions constructed in $[0,t_1]$ and in $[t_1,t_2]$ associated to the time contatenation of the two feedback controls defined in \eqref{eq:defK1} and \eqref{eq:defcontrolfeedback}. It is worth mentioning that the parameter $\delta$ of perturbation has to be chosen so that $\delta < M_1/2 \leq M_2/8$.  In the next step, we will choose $M_2$ small so that $2M_2< \varepsilon$. By looking at the definitions of $M_1$ and $M_2$ that could be chosen large a priori for $\lambda_0$ sufficiently large, we will obtain that $\delta$ can be chosen of the form $\delta = \frac{ \varepsilon}{16}$.\\

\textbf{Step 4: Estimation of the whole solution at time $t_2$.} From \eqref{eq:ansatz} evaluated at time $t=t_2$, we have
\begin{equation}
\psi(t_2, \cdot)=e^{-\mu (t_2-t_1)}\phi_\lambda(t_2, \cdot) + w(t_2, \cdot).
\end{equation}

Then from the estimate \eqref{eq:philambdatH2} of \Cref{lem:philambda}, the bound on $w$ given by \eqref{eq:spacefixedpoint} and the choice of $\mu$ in \eqref{eq:defmut1t2}, we have the following estimates
\begin{align*}
\| \psi(t_2, \cdot)\|_{H^2(\Omega)} &\leq    e^{-\mu (t_2-t_1)} \| \phi_\lambda(t_2, \cdot)\|_{H^2(\Omega)} +  \|w(t_2, \cdot)\|_{H^2( \Omega)} \\
 &\leq   \dfrac{C  e^{-\mu (t_2-t_1)}}{\lambda^{\alpha}(T_\lambda-t_2)^\beta} +  M_2\\
 &\leq  \dfrac{C  e^{-\frac{1}{\lambda(T_\lambda -t_2)}}}{\lambda^{\alpha}(T_\lambda-t_2)^\beta} + M_2.
\end{align*}
Recalling the choice of $M_2$ in \eqref{eq:defM} and those of $t_2$ in \eqref{eq:deft1t2}, one can choose $M_2$ sufficiently small and uniformly in $\lambda$ to verify
\begin{equation}
\| \psi(t_2, \cdot)\|_{H^2(\Omega)}  \leq 2 M_2 < \varepsilon,
\end{equation}
for $\lambda$ sufficiently large. By \eqref{eq:defK1} and  \eqref{eq:defcontrolfeedback}, the control $v$ is then given by the time contatenation of the two feedback controls $v(t,\cdot)=\mathds{1}_{[0,t_1]}(t) \mathcal{K}_\lambda^1 (\psi(t,\cdot)) + \mathds{1}_{[t_1,t_2]}(t) \mathcal{K}_\lambda^2 (\psi(t,\cdot)) \in L^\infty((0,t_2);H^2(\Omega)\cap H^1_0(\Omega))$. This ends the proof of \Cref{pr:rapidstab}.
\end{proof}

\subsection{Extension to a global solution: end of the proof of \Cref{tm:mainresult1}}
\label{sec:endproofTh1}

In this section, we prove \Cref{tm:mainresult1}. 
\begin{proof}[Proof of \Cref{tm:mainresult1}]
Fix $\varepsilon = \|Q\|_{L^2(\R^2)}/2 >0$. We then apply \Cref{pr:rapidstab} that furnishes $t_1 < t_2 \in (0,T_{\lambda})$ and a control $v_{c} \in L^2((0,t_2);H^2(\Omega) \cap H_0^1(\Omega))$ supported in $\omega$ such that the solution $\psi_{c}$ of \eqref{eq:L2CriticalNLSt2} satisfies \eqref{eq:psipetitt2}. From \Cref{tm:globalwp}, we have that the free Schrödinger equation
\begin{equation}
	\label{eq:L2CriticalNLSt2infty}
		\left\{
			\begin{array}{ll}
				 i \partial_t \psi + \Delta \psi = - |\psi|^{2} \psi  & \text{ in }  (t_2,+\infty) \times \Omega, 
				\\
				\psi = 0 & \text{ on } (t_2,+\infty)\times \partial \Omega, 
				\\
				\psi(t_2, \cdot) = \psi_{c}(t_2, \cdot) & \text{ in } \Omega,
			\end{array}
		\right.
\end{equation}
has a unique global solution $\tilde{\psi} \in C([t_2,+\infty);H^2(\Omega) \cap H_0^1(\Omega))$. We finally set the solution 
\begin{equation}
\psi = \psi_{c}\ \text{in}\ (0,t_2),\ \psi =  \tilde{\psi}\ \text{in}\ (t_2,+\infty),\ \psi  \in C([0,+\infty);H^2(\Omega) \cap H_0^1(\Omega)),
\end{equation}
asssociated to the control
\begin{equation}
v = v_{c}\ \text{in}\ (0,t_2),\ v =0\ \text{in}\ (t_2,+\infty), \ v \in L^2([0,+\infty);H^2(\Omega) \cap H_0^1(\Omega)),
\end{equation}
that furnishes a global controlled trajectory. This concludes the proof.
\end{proof}

\subsection{Local null-controllability result: end of the proof of \Cref{tm:mainresult2}}
\label{sec:proofth2}

In this section, we prove \Cref{tm:mainresult2}. The following small-time local null-controllability of \eqref{eq:L2CriticalNLSControl} will be used.
\begin{tm}
\label{thm:localnullcontrollabilityNLS}
We suppose that Assumption \ref{ass:linschroControllable} holds. Then, for every $T>0$, there exists $\delta_T>0$ such that for every $\psi_0 \in H^2(\Omega) \cap H_0^1(\Omega)$ with $\|\psi_0\|_{H^2(\Omega) } \leq \delta_T$, there exists $v \in C([0,T];H^2(\omega))$ such that the solution $\psi$ of \eqref{eq:L2CriticalNLSControl} satisfies $\psi(T,\cdot) = 0$.
\end{tm}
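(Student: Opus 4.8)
The plan is to prove \Cref{thm:localnullcontrollabilityNLS} by linearising around $0$ and treating the nonlinearity $N(\psi):=-|\psi|^2\psi$ as a source term in a fixed-point argument. The entire difficulty lies in converting the $L^2$-controllability granted by \Cref{ass:linschroControllable} into a controllability statement at the $H^2(\Omega)\cap H_0^1(\Omega)$ level that reaches exactly $0$ and depends continuously on the source. \textbf{Step 1 (upgrading the linear controllability).} Let $A=-\Delta$ with domain $H^2(\Omega)\cap H_0^1(\Omega)$ be the Dirichlet Laplacian, so that $(S(t))_{t\in\R}$ coincides with $(e^{-itA})$ and commutes with every power of $A$. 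From \Cref{ass:linschroControllable} and the Hilbert Uniqueness Method one first gets a bounded linear operator sending $\psi_0\in L^2(\Omega)$ to a control $v\in L^2((0,T);L^2(\omega))$ steering $\psi_0$ to $0$ at time $T$. To gain spatial regularity I would exploit $[A,S(t)]=0$: choosing an interior cut-off $\zeta\in C_c^\infty(\omega;[0,1])$ equal to $1$ on a slightly smaller set, one replaces the indicator by the smooth localisation $\zeta$ and transfers the $L^2$ observability to the adjoint equation tested on $D(A)=H^2\cap H_0^1$, producing, for $\psi_0\in H^2\cap H_0^1$, a control with $\zeta v\in C([0,T];H^2(\omega))$ and an associated trajectory in $C([0,T];H^2\cap H_0^1)$ reaching $0$ at time $T$.

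\textbf{Step 2 (weighted source-term controllability).} Following the source-term method, fix weights $0<\rho_0\le\rho$ on $[0,T)$ blowing up as $t\to T^-$ at compatible rates. Using the controllability of the linear equation in \emph{arbitrary} time, I would telescope the control construction over a geometric exhaustion $[\,T-2^{-k}T,\,T-2^{-k-1}T\,]$ of $[0,T)$ to obtain a continuous linear solution operator $(\psi_0,g)\mapsto(\psi,v)$ for $i\partial_t\psi+\Delta\psi=g+v\mathds{1}_\omega$ with $\psi(0,\cdot)=\psi_0$, satisfying $\psi(T,\cdot)=0$ together with the weighted estimate
\begin{equation*}
\|\rho\,\psi\|_{L^\infty((0,T);H^2\cap H_0^1)}+\|\rho_0\, v\|_{L^2((0,T);H^2(\omega))}\le C\bigl(\|\psi_0\|_{H^2\cap H_0^1}+\|\rho\,g\|_{L^1((0,T);H^2\cap H_0^1)}\bigr).
\end{equation*}
The weight $\rho$ forces $\psi$ to vanish at $t=T$ and, crucially, makes the operator continuous in $g$ for the weighted $H^2$ topology.

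\textbf{Step 3 (nonlinear fixed point).} Define $\Phi(\psi)$ to be the state component of the image of $(\psi_0,N(\psi))$ under the operator of Step 2, on the ball $B_R=\{\psi\ :\ \|\rho\,\psi\|\le R\}$. Since $d=2$, $H^2(\Omega)$ is a Banach algebra and $H^2(\Omega)\hookrightarrow L^\infty(\Omega)$, so $N$ is cubic and locally Lipschitz, giving $\|\rho\,N(\psi)\|_{L^1(H^2)}\lesssim R^3$ and $\|\rho\,(N(\psi)-N(\tilde\psi))\|_{L^1(H^2)}\lesssim R^2\,\|\rho\,(\psi-\tilde\psi)\|$. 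Hence for $\delta_T$ small and $\|\psi_0\|_{H^2\cap H_0^1}\le\delta_T$, the map $\Phi$ sends $B_R$ into itself and is a contraction; its fixed point is a solution of \eqref{eq:L2CriticalNLSControl} with $\psi(T,\cdot)=0$ and control $v\in C([0,T];H^2(\omega))$.

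The main obstacle is Steps 1 and 2: turning the purely qualitative $L^2$-STNC of \Cref{ass:linschroControllable} into a quantitative, source-dependent controllability at $H^2\cap H_0^1$ regularity, with a localised and time-regular control, and calibrating the weights to be strong enough to reach exactly $0$ yet weak enough for the cubic source to be absorbed. The nonlinear fixed point of Step 3 is then routine, thanks to the favourable algebra and embedding properties available in dimension two.
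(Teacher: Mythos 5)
Your strategy (a weighted source-term method with a fixed point on the state) is genuinely different from the paper's proof, which uses Zuazua's perturbative argument: the paper solves the \emph{nonlinear} equation \emph{backward} in time from $\psi(T,\cdot)=0$ with a control of the explicit form $a^2(x)\varphi^2(t)e^{it\Delta}\psi_0$ parametrized by an auxiliary datum $\psi_0$, writes the map $\psi_0\mapsto\psi(0,\cdot)$ as $L=K+S$, where $S$ is the linear control operator (an isomorphism of $H^2(\Omega)\cap H_0^1(\Omega)$ by \Cref{ass:linschroControllable}, HUM and the Ervedoza--Zuazua regularity argument of \cite{EZ10} --- your Step~1 is essentially this same ingredient, although the commutator argument you sketch does not suffice by itself, since the spatial cut-off does not commute with $\Delta$), and then runs a Banach fixed point on the datum $\psi_0$ via $B\psi_0=S^{-1}u_0-S^{-1}K\psi_0$. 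Crucially, this backward solve exploits the time-reversibility of the Schr\"odinger flow and invokes controllability only on the single fixed interval $[0,T]$.

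Your Step~2 contains a genuine gap. First, the weighted estimate cannot hold with the \emph{same} weight $\rho$ on both $\|\rho\,\psi\|_{L^\infty(H^2\cap H_0^1)}$ and $\|\rho\, g\|_{L^1(H^2\cap H_0^1)}$: in the telescoping construction, the state present at the beginning of the interval $I_{k}=[t_k,t_{k+1}]$ is generated by the source acting on $I_{k-1}$ and must be steered to zero on $I_k$ at cost $C_k$ (the null-control cost of \eqref{eq:linschro} in time $|I_k|\to 0$), which forces a compatibility of the form $\rho_{\mathrm{state}}(t_{k+1})\,C_k\lesssim\rho_{\mathrm{source}}(t_{k-1})$; with $\rho_{\mathrm{source}}=\rho_{\mathrm{state}}=\rho$ increasing this is impossible because $C_k\to\infty$. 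Second, and more seriously, even after correcting this with two weights, your Step~3 needs $\rho_{\mathrm{source}}\lesssim\rho_{\mathrm{state}}^{3}$ so that the cubic nonlinearity maps the weighted state space into the weighted source space; combined with the previous constraint this requires $\rho(t_{k+1})\,C_k\lesssim\rho(t_{k-1})^{3}$, whose solvability amounts to a geometrically weighted summability condition on $\log C(|I_k|)$ over some partition with $\sum_k|I_k|=T$. This can be arranged when the cost obeys quantitative bounds of the type $C(\tau)\le e^{K/\tau^{\alpha}}$ (e.g.\ under GCC), but it fails for costs blowing up too fast (e.g.\ doubly exponentially), and \Cref{ass:linschroControllable} is purely qualitative: it provides no bound whatsoever on $C(\tau)$ as $\tau\to 0^+$. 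Hence your weight calibration cannot be carried out in general under the paper's hypotheses, whereas the paper's backward-in-time argument never needs control on short time intervals and therefore never needs a cost estimate. Your scheme would become correct either under an added cost hypothesis, or by replacing Steps~2--3 with the reversibility trick above.
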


The proof of \Cref{thm:localnullcontrollabilityNLS} is based on a perturbative argument due to Zuazua in the context of the wave equation, see \cite{Zua90}. Roughly speaking, the small-time local null-controllability of \eqref{eq:L2CriticalNLSControl} comes from the small-time (global) null-controllability of the linearized equation \eqref{eq:linschro}. Local exact controllability in $H^1(\T)$ for \eqref{eq:L2CriticalNLSControl} has been first obtained in \cite{ILT03}. It has then been extended in $H^s(\T)$ in \cite{RZ09} for every $s \geq 0$ by using moments theory and Bourgain analysis for the treatment of the semilinearity seen as a small perturbation of the linear case. See also \cite{Lau10b}. This type of result has been generalized to any $d$-dimensional torus $\T^d$, $d \geq 2$, in \cite{RZ10}. In \cite{DGL06}, then in \cite{Lau10}, the authors obtain the same kind of local null-controllability results but for defocusing nonlinear Schrödinger equations on manifolds without boundary. They are only considering such cases because their goal is to obtain global controllability results by using first a stabilization procedure. See also \cite{LBM23}. In particular, the nonnegative sign of the nonlinear energy in the defocusing case is crucially used. For a survey of these results up to 2014, one can read \cite{Lau14}. We decide to postpone the proof of \Cref{thm:localnullcontrollabilityNLS} in \Cref{sec:app} for the sake of completeness.

\begin{proof}[Proof of \Cref{tm:mainresult2}]
Fix $\varepsilon = \delta_{T/2} >0$ where $\delta_{T/2}$ is the radius of local null-controllability of \eqref{eq:L2CriticalNLSControl} for data in $H^2(\Omega) \cap H_0^1(\Omega)$ at time $T/2$. We then apply \Cref{pr:rapidstab} that furnishes $t_2 \in (0,T_{\lambda})$ and a control $v_{c} \in L^2((0,t_2);H^2(\Omega) \cap H_0^1(\Omega))$ supported in $\omega$ such that the solution $\psi_{c}$ of \eqref{eq:L2CriticalNLSt2} satisfies \eqref{eq:psipetitt2}. Indeed, up to taking $\lambda>0$ larger, $t_2=T_\lambda(1-T_\lambda) < T/2$. From \Cref{thm:localnullcontrollabilityNLS}, we have that the free nonlinear Schrödinger equation
\begin{equation}
	\label{eq:L2CriticalNLSt2T}
		\left\{
			\begin{array}{ll}
				 i \partial_t \psi + \Delta \psi = - |\psi|^{2} \psi + v 1_{\omega} & \text{ in }  (t_2,T) \times \Omega, 
				\\
				\psi = 0 & \text{ on } (t_2,T)\times \partial \Omega, 
				\\
				\psi(t_2, \cdot) = \psi_{c}(t_2, \cdot) & \text{ in } \Omega,
			\end{array}
		\right.
\end{equation}
is null-controllable. There exists $v_l \in C([t_2,T];H^2(\omega))$ such that the solution $\tilde{\psi}$ of \eqref{eq:L2CriticalNLSt2T} satisfies $\tilde{\psi}(T,\cdot) = 0$. We finally set the solution 
\begin{equation} 
\psi = \psi_{c}\ \text{in}\ (0,t_2),\ \psi =  \tilde{\psi}\ \text{in}\ (t_2,T),\ \psi  \in C([0,T];H^2(\Omega) \cap H_0^1(\Omega)),
\end{equation}
asssociated to the control,
\begin{equation}
v = v_{c}\ \text{in}\ (0,t_2),\ v =v_{l} \ \text{in}\ (t_2,T), \ v \in L^{\infty}([0,T];H^2(\Omega) \cap H_0^1(\Omega)),
\end{equation}
that furnishes a global controlled trajectory satisfying \eqref{eq:psizero}. This concludes the proof.
\end{proof}

\subsection{Proofs of the results in the $3$-d case}
\label{sec:3dextension}

Let us now present the main differences in the proofs of \Cref{tm:mainresult13d} and \Cref{tm:mainresult23d}. \\

\textbf{Kato's method for the rapid stabilization.} The first difficulty appears in the proof of Step 1 and Step 2 of the rapid stabilization result i.e. \Cref{pr:rapidstab}.

\begin{proof}[Sketch of the proof of Step 1 and Step 2 of \Cref{pr:rapidstab}]
To prove that $\Gamma_1$ and $\Gamma_2$ have a unique fixed-point in $X_{M_1}$ and $X_{M_2}$ respectively (simply denoted below $X_M$ for sake of simplicity), we use Kato's method \cite{Kat87}. We endow $X_M$ defined by \eqref{eq:spacefixedpoint} with the distance
\[
d(u,v)= \sup_{t \in [t_1,t_2]} \|u-v\|_{L^2(\Omega)}. 
\]
The following lemma holds.
\begin{lm}[\cite{God11}] 
The space $X_{M}$ endowed with the distance $d$ is a complete metric space.  
\end{lm}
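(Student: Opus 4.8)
The plan is to recognise the distance $d(u,v)=\sup_{t\in[t_1,t_2]}\|u(t)-v(t)\|_{L^2(\Omega)}$ as the sup-in-time $L^2$-distance, i.e. the distance induced by the norm of the Banach space $C([t_1,t_2];L^2(\Omega))$, and then to run the standard Kato completeness argument: a $d$-Cauchy sequence first converges in $C([t_1,t_2];L^2(\Omega))$, and the only real work is to check that the limit lands back inside the ball $X_M$ of \eqref{eq:spacefixedpoint}. Since $H=H^2(\Omega)\cap H^1_0(\Omega)\hookrightarrow L^2(\Omega)$ continuously, every element of $X_M$ indeed defines a point of $C([t_1,t_2];L^2(\Omega))$, so $d$ is a genuine metric and this reduction is legitimate. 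A preliminary clarification I would make is that, in Kato's scheme, the object to which the $L^2$-contraction is applied must be topologised so that only continuity into $L^2(\Omega)$ is required, while the $H$-bound is imposed pointwise in $t$; strong $H$-continuity is neither available nor needed at this stage (see the last paragraph).

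First I would take a $d$-Cauchy sequence $(w_n)\subset X_M$. By definition of $d$ it is Cauchy in $C([t_1,t_2];L^2(\Omega))$, which is complete, so there is $w\in C([t_1,t_2];L^2(\Omega))$ with $\sup_t\|w_n(t)-w(t)\|_{L^2(\Omega)}\to 0$, that is $d(w_n,w)\to 0$; uniqueness of the $d$-limit is automatic. It then remains to show that $w$ respects the $H$-bound, namely $\sup_t\|w(t)\|_{H}\le M$.

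The key step is a weak-compactness and lower-semicontinuity argument carried out pointwise in time, and this is where I expect the main obstacle to lie, since the strong $H$-bound survives the passage to the limit only in the weak sense. Fix $t\in[t_1,t_2]$. The sequence $(w_n(t))_n$ is bounded in $H$ by $M$, and $H$ is a Hilbert space, hence reflexive, so a subsequence satisfies $w_{n_k}(t)\rightharpoonup \zeta$ weakly in $H$ for some $\zeta\in H$. Because $H\hookrightarrow L^2(\Omega)$ continuously, weak convergence in $H$ forces weak convergence in $L^2(\Omega)$ to the same $\zeta$; but $w_{n_k}(t)\to w(t)$ strongly in $L^2(\Omega)$, so $\zeta=w(t)$. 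Thus $w(t)\in H$, and by weak lower-semicontinuity of the $H$-norm, $\|w(t)\|_H\le\liminf_k\|w_{n_k}(t)\|_H\le M$. As $t$ is arbitrary, $\sup_t\|w(t)\|_H\le M$.

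Finally I would address the time-regularity of $w$. The uniform $L^2$ limit gives $w\in C([t_1,t_2];L^2(\Omega))$ for free, and combined with the uniform $H$-bound this upgrades to continuity of $t\mapsto w(t)$ for the weak topology of $H$; this is exactly the regularity required for membership of $X_M$ in the Kato topology, and it is why the metric is measured in the weaker $L^2$ norm rather than in $H$. Collecting the above, $w\in X_M$ and $d(w_n,w)\to 0$, which proves that $(X_M,d)$ is complete. I would then note, for use later, that strong $H$-continuity of the object that actually matters, namely the fixed point of $\Gamma_2$, is recovered a posteriori from its Duhamel representation \eqref{eq:defGamma}: the strong continuity of $(S(t))$ on $H$ together with $N_1(\phi_\lambda)+N_2(\tilde w,\phi_\lambda)\in C([t_1,t_2];H)$ forces the fixed point to lie in $C([t_1,t_2];H)$, so no regularity is lost in the end.
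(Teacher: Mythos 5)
Your proof is correct, and it is the standard argument behind this lemma; the paper itself does not reprove it but defers to \cite{God11}, where the proof is exactly the scheme you follow: uniform $L^2$ convergence of a $d$-Cauchy sequence, pointwise-in-time weak compactness of the $H$-bounded sequence in the reflexive space $H=H^2(\Omega)\cap H^1_0(\Omega)$, identification of the weak limit through the embedding $H\hookrightarrow L^2(\Omega)$, and weak lower semicontinuity of the $H$-norm. One point in your write-up deserves emphasis because it is not pedantry but a genuine repair: with the paper's literal definition \eqref{eq:spacefixedpoint}, in which elements of $X_{M_2}$ are required to lie in $C([t_1,t_2];H)$ with the \emph{strong} topology, the ball is not closed under $d$-limits and the lemma as stated would fail. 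Indeed, taking $\phi\in C^\infty_c(\Omega)$ and $u(t,\cdot)=t^{2}\phi(\cdot)\sin(x_1/t)$ for $t>0$, $u(t_1)=0$ (after translating so the singular time is $t_1$), one gets a function bounded in $H^2$, strongly continuous in $L^2$, weakly but not strongly $H^2$-continuous at $t_1$; truncating the time variable at $t_1+1/n$ produces a $d$-Cauchy sequence inside the strongly-continuous ball whose $d$-limit leaves $C([t_1,t_2];H)$. So the space must be read, as you do and as in Kato's method, with the $H$-bound imposed pointwise in $t$ (equivalently in $L^{\infty}([t_1,t_2];H)$) and only $L^2$-continuity required; the limit is then weakly $H$-continuous, which is all the fixed-point scheme needs. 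Your closing remark is the right complement: the fixed point of $\Gamma_2$ regains strong $H$-continuity a posteriori from \eqref{eq:defGamma}, and for that it suffices that the integrand lie in $L^{\infty}([t_1,t_2];H)$ (strong measurability follows from weak continuity and separability of $H$) together with the strong continuity of the group $(S(t))$ on $H$; requiring the integrand to be in $C([t_1,t_2];H)$, as you wrote, is slightly more than is available, but the conclusion is unaffected.
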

Moreover, using the embedding $H^2(\Omega \setminus \omega_1) \hookrightarrow L^\infty(\Omega \setminus \omega_1)$, we have the following estimates.
\begin{lm}[Lemma 1 of \cite{God11}]\label{lm:cubicH2}
There exists $C>0$ such that for every $u,v\in H^2(\Omega)$, 
\begin{itemize}
\item $ \| |u|^{4/3}u-|v|^{4/3}v \|_{L^2(\Omega)} \leq C \|u-v\|_{L^2(\Omega)} \Big(\|u\|_{L^\infty(\Omega)} + \|v\|_{L^\infty(\Omega)}\Big)^{4/3}, $ 
\item $ \| |u|^{4/3}u-|v|^{4/3}v \|_{H^1(\Omega)} \leq C \|u-v\|_{H^2(\Omega)} \Big(\|u\|_{H^2(\Omega)} + \|v\|_{H^2(\Omega)}\Big)^{4/3}, $ 
\item $ \| |u|^{4/3}u \|_{H^2(\Omega)} \leq C \|u\|_{H^2(\Omega)}^{1+4/3}$.
\end{itemize}
\end{lm}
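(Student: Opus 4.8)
My plan is to reduce all three inequalities to pointwise estimates on the map $F(z)=|z|^{4/3}z$, $z\in\C\cong\R^2$, and then feed these into the Sobolev structure of $H^2(\Omega)$. The function $F$ is positively homogeneous of degree $7/3$ and smooth away from the origin, hence of class $C^{2,1/3}$, with the scaling bounds $|F'(z)|\le C|z|^{4/3}$ and $|F''(z)|\le C|z|^{1/3}$. Integrating $DF$ along the segment $[w,z]$ gives the two difference estimates I will use repeatedly: $|F(z)-F(w)|\le C|z-w|(|z|+|w|)^{4/3}$ and $|F'(z)-F'(w)|\le C|z-w|(|z|+|w|)^{1/3}$. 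Throughout I will invoke the embedding $H^2(\Omega)\hookrightarrow L^\infty(\Omega)$ (valid in dimension $d\le 3$, as already recalled before \Cref{lm:localwpH2H01}) together with the embedding $H^2(\Omega)\hookrightarrow W^{1,4}(\Omega)$, which holds since $\nabla u\in H^1(\Omega)\hookrightarrow L^4(\Omega)$ for $d\le 4$.

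For the first ($L^2$) estimate the argument is immediate: applying the pointwise bound with $z=u(x)$, $w=v(x)$ and factoring out the $L^\infty$ norms gives $\| |u|^{4/3}u-|v|^{4/3}v\|_{L^2(\Omega)}\le C(\|u\|_{L^\infty}+\|v\|_{L^\infty})^{4/3}\|u-v\|_{L^2(\Omega)}$, which is exactly the claim.

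For the third estimate I will use the chain rule $\nabla F(u)=F'(u)\nabla u$ and $\nabla^2 F(u)=F''(u)(\nabla u\otimes\nabla u)+F'(u)\nabla^2 u$, and bound the three contributions to $\|F(u)\|_{H^2}$ separately. The zeroth- and first-order terms are controlled by $\|u\|_{L^\infty}^{4/3}\|u\|_{L^2}$ and $\|u\|_{L^\infty}^{4/3}\|\nabla u\|_{L^2}$, each $\le C\|u\|_{H^2}^{7/3}$. The only delicate term is the quadratic one coming from $F''$: here I estimate $\big\| |u|^{1/3}|\nabla u|^2\big\|_{L^2(\Omega)}\le \|u\|_{L^\infty}^{1/3}\|\nabla u\|_{L^4(\Omega)}^2\le C\|u\|_{H^2}^{1/3}\|u\|_{H^2}^2=C\|u\|_{H^2}^{7/3}$, where the $W^{1,4}$ embedding is exactly what makes the $L^4$ norm of $\nabla u$ available. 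The remaining second-order piece $F'(u)\nabla^2 u$ is bounded by $\|u\|_{L^\infty}^{4/3}\|\nabla^2 u\|_{L^2}\le C\|u\|_{H^2}^{7/3}$. Summing yields the result. For the second ($H^1$) estimate I split $\nabla(F(u)-F(v))=F'(u)\nabla(u-v)+(F'(u)-F'(v))\nabla v$: the first term is $\le C\|u\|_{L^\infty}^{4/3}\|u-v\|_{H^1}$, and for the second I use the difference bound on $F'$ to get $\le C(\|u\|_{L^\infty}+\|v\|_{L^\infty})^{1/3}\|u-v\|_{L^\infty}\|\nabla v\|_{L^2}\le C(\|u\|_{H^2}+\|v\|_{H^2})^{4/3}\|u-v\|_{H^2}$; combined with the $L^2$ bound already obtained this closes the estimate.

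The main obstacle, and the reason this lemma is singled out in the three-dimensional setting, is the limited regularity of the nonlinearity: unlike the algebraic cubic nonlinearity in $d=2$, $F(z)=|z|^{4/3}z$ is only $C^{2,1/3}$, so I must justify the second-order chain rule for $u\in H^2\cap L^\infty$ rather than simply differentiate a polynomial. I would make this rigorous by a standard density/Moser-type argument, approximating $u$ by smooth functions and passing to the limit using the continuity of $F'$ and $F''$ together with the uniform bounds above. The fractional homogeneity is precisely what forces the appearance of the mixed term $|u|^{1/3}|\nabla u|^2$, which cannot be absorbed by $L^\infty$ control of $u$ alone and genuinely requires the $H^2\hookrightarrow W^{1,4}$ embedding; this is the only point where the two- and three-dimensional arguments diverge.
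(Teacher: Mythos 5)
Your proof is correct. Note, however, that the paper itself gives no argument for this statement: \Cref{lm:cubicH2} is imported verbatim as Lemma~1 of \cite{God11}, with the proof left to that reference, so there is no internal proof to compare yours against --- your write-up supplies what the paper delegates to a citation. Your route is the natural one and the details check out: the pointwise bounds $|F(z)-F(w)|\le C|z-w|(|z|+|w|)^{4/3}$ and $|F'(z)-F'(w)|\le C|z-w|(|z|+|w|)^{1/3}$ for $F(z)=|z|^{4/3}z$ follow from the homogeneity of $DF$ and $D^2F$ (and your observation that $D^2F$ vanishes like $|z|^{1/3}$ at the origin, so $F$ is genuinely $C^2$, is exactly what legitimizes the density argument for the chain rule); the exponent bookkeeping in the delicate term $\|\,|u|^{1/3}|\nabla u|^2\|_{L^2}\le\|u\|_{L^\infty}^{1/3}\|\nabla u\|_{L^4}^2$ gives $1/3+2=1+4/3$ as required; and the splitting $\nabla(F(u)-F(v))=F'(u)\nabla(u-v)+(F'(u)-F'(v))\nabla v$ correctly explains why the $H^1$ difference bound must carry $\|u-v\|_{H^2}$ on the right-hand side (the term $\|u-v\|_{L^\infty}$ is only controlled by the $H^2$ norm), which matches the asymmetric form of the second bullet. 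Beyond self-containedness, your proof isolates precisely where the dimensional restrictions enter --- $H^2(\Omega)\hookrightarrow L^\infty(\Omega)$ requires $d\le 3$ and $\nabla u\in H^1(\Omega)\hookrightarrow L^4(\Omega)$ requires $d\le 4$ --- which clarifies the paper's remark that the $d=3$ case is harder solely because the nonlinearity is no longer algebraic.
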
 

Let us now proceed to the estimates of $N_1$ and $N_2$ of Step 2, the estimates of Step 1 being similar. By using the decomposition of $\phi_{\lambda}$ in \eqref{eq:decompositionphilambda}, the estimate on $r_{\lambda}$ in \eqref{eq:estimationrlambda}, the estimate \eqref{eq:boundRLambdaOutsideomega} of $R_{\lambda}$ in Lemma \ref{lem:R}, the estimates \eqref{eq:outsideomega1H2} and Lemma \ref{lm:cubicH2}, we have the following sequence of estimates
\begin{align}
\left\| \int_{t_1}^{t} S(t-\tau)N_1(\phi_\lambda)(\tau) d\tau   \right\|_{X} \leq &  \int_{t_1}^{t_2} \| N_1(\phi_\lambda)(\tau) \|_{H^2(\Omega)} d\tau \notag\\
\leq &  C \int_{t_1}^{t_2} \| N_1(\phi_\lambda)(\tau) \|_{H^2(\Omega \setminus \omega_1)} d\tau  \notag \\
\leq & C (1+\mu) \int_{t_1}^{t_2} \|\phi_\lambda(\tau)\|_{H^2(\Omega \setminus \omega_1)}+ \|\phi_\lambda(\tau)\|_{H^2(\Omega \setminus \omega_1)}^{1+4/3} d\tau \notag \\
\leq  & C (1+\mu) \int_{t_1}^{t_2} e^{- \frac{\delta'}{\lambda(T_\lambda - \tau)}} d \tau  \notag\\ 
\leq  & C(1+\mu)(t_2-t_1) e^{- \frac{\delta'}{\lambda(T_\lambda - t_1)}}\qquad \forall t \in (t_1,t_2),
\label{eq:estimateN13d}
\end{align}
for some positive constant $C>0$ and $\delta'=\delta'(r, \kappa)>0$. To estimate $N_2$ defined in \eqref{eq:defN2}, we use Lemma \ref{lm:cubicH2}, the fact that $x\in \R^+ \mapsto x^{1+4/3}$ is a convex function and the decomposition \eqref{eq:ansatz} of $\psi$,
\begin{align*}
\| (1-\chi)(|\psi|^{4/3}\psi-&e^{-(1+4/3)\mu (t-t_1)}|\phi_\lambda|^{4/3} \phi_\lambda) \|_{H^2(\Omega \setminus \omega)} \\
&\leq C \| |\psi|^{4/3}\psi-e^{-(1+4/3)\mu (t-t_1)}|\phi_\lambda|^{4/3} \phi_\lambda \|_{H^2(\Omega \setminus \omega)} \\
&\leq C (\| |\psi|^{4/3}\psi \|_{H^2(\Omega \setminus \omega)} + \| e^{-(1+4/3)\mu (t-t_1)}|\phi_\lambda|^{4/3} \phi_\lambda \|_{H^2(\Omega \setminus \omega)}) \\
&\leq C (\| |\psi|^{4/3}\psi \|_{H^2(\Omega \setminus \omega)} +  e^{-(1+4/3)\mu (t-t_1)} \||\phi_\lambda|^{4/3} \phi_\lambda \|_{H^2(\Omega \setminus \omega)}) \\
&\leq C (\| \psi \|^{1+4/3}_{H^2(\Omega \setminus \omega)} +  e^{-(1+4/3)\mu (t-t_1)} \| \phi_\lambda \|^{1+4/3}_{H^2(\Omega \setminus \omega)}) \\
&\leq C (\| e^{-\mu (t-t_1)}\phi_\lambda + w \|^{1+4/3}_{H^2(\Omega \setminus \omega)} +  e^{-(1+4/3)\mu (t-t_1)} \| \phi_\lambda \|^{1+4/3}_{H^2(\Omega \setminus \omega)}) \\
&\leq C ((\| e^{-\mu (t-t_1)}\phi_\lambda \|_{H^2(\Omega \setminus \omega)}+ \|w \|_{H^2(\Omega \setminus \omega)})^{1+4/3} +  e^{-(1+4/3)\mu (t-t_1)} \| \phi_\lambda \|^{1+4/3}_{H^2(\Omega \setminus \omega)})\\
&\leq C ( e^{-(1+4/3)\mu (t-t_1)}\| \phi_\lambda \|_{H^2(\Omega \setminus \omega)}^{1+4/3} + \|w \|_{H^2(\Omega \setminus \omega)}^{1+4/3} +  e^{-(1+4/3)\mu (t-t_1)} \| \phi_\lambda \|^{1+4/3}_{H^2(\Omega \setminus \omega)})\\
&\leq C (  \|w \|_{H^2(\Omega \setminus \omega)}^{1+4/3}+  e^{-(1+4/3)\mu (t-t_1)} \| \phi_\lambda \|^{1+4/3}_{H^2(\Omega \setminus \omega)}),
\end{align*}
for a constant $C>0$ depending only on $\Omega$ and $\omega$.
Hence, 
\begin{align}
\left\| \int_{t_1}^{t} S(t-\tau)N_2(w,\phi_\lambda)(\tau) d\tau   \right\|_{X}  & \leq  C \int_{t_1}^{t_2} \| N_2(w,\phi_\lambda)(\tau) \|_{H^2(\Omega \setminus \omega_1)}  d\tau \notag\\
& \leq C \int_{t_1}^{t_2} \left( \| w(\tau, \cdot) \|_{H^2(\Omega \setminus \omega_1)}^{1+4/3} +  \| \phi_\lambda(\tau,\cdot) \|_{H^2(\Omega \setminus \omega_1)}^{1+4/3} \right)   d\tau \notag \\
 & \leq C(t_2-t_1) (M^{1+4/3} + e^{- \frac{ \delta'}{\lambda(T_\lambda - t_1)}} ) \qquad \forall t \in (t_1,t_2), \label{eq:estimateN23d}
\end{align}
for some positive constant $C>0$ and $\delta'=\delta'(r, \kappa)>0$. Hence, to prove that $\Gamma_{i} : X_M \to X_M, i=1,2$, we follow the same procedure with the use of \eqref{eq:estimateN13d} and \eqref{eq:estimateN23d}.

Let us now prove the contraction property. Denote $\psi_w = \Gamma(w)$ and $\psi_z = \Gamma(z)$ associated to $w,z\in X_M$ respectively. For any  $w,z\in X_M$, we have, using Lemma \ref{lm:cubicH2} and the Sobolev embedding $H^2(\Omega \setminus \omega_1)\hookrightarrow L^\infty(\Omega \setminus \omega_1)$,
\begin{align*}
d(\Gamma(w),\Gamma(z)) & \leq \int_{t_1}^{t_2} \| (1-\chi)(|\psi_w|^{4/3}\psi_w - |\psi_z|^{4/3}\psi_z) \|_{L^2(\Omega \setminus \omega_1)} d \tau \\
& \leq C \int_{t_1}^{t_2} \| |\psi_w|^{4/3}\psi_w - |\psi_z|^{4/3}\psi_z \|_{L^2(\Omega \setminus \omega_1)} d \tau \\
& \leq C \int_{t_1}^{t_2} \left( \| \psi_w \|_{H^2(\Omega \setminus \omega_1)}^{4/3} + \|\psi_z \|_{H^2(\Omega \setminus \omega_1)}^{4/3} \right) \|\psi_w-\psi_z\|_{L^2(\Omega \setminus \omega_1)} d \tau \\
& \leq C \int_{t_1}^{t_2} \left( \| \phi_\lambda \|_{H^2(\Omega \setminus \omega_1)}^{4/3} + \|w \|_{H^2(\Omega \setminus \omega_1)}^{4/3} + \|z\|_{H^2(\Omega \setminus \omega_1)}^{4/3} \right) \|w - z\|_{L^2(\Omega \setminus \omega_1)} d \tau \\
& \leq C(t_1-t_2) \left( e^{- \frac{ \delta'}{\lambda(T_\lambda - t_1)}}+ 2M^{4/3} \right) d(w,z). 
\end{align*}
This last estimate yields the contraction property.
\end{proof}

\textbf{Extension to a global solution for small data in $H^2(\Omega) \cap H_0^1(\Omega)$.} The second difficulty is the extension of $\psi$ to a global solution in $[t_2, +\infty)$ due to the lack global well-posedness in $H^2(\Omega) \cap H_0^1(\Omega)$ in $3$-d. However, by using the following $3$-d Gagliardo-Nirenberg inequality
\begin{equation}
\displaystyle E(\psi)  \geq \frac{1}{2} \left(\int_{\Omega} |\nabla \psi|^2\right)\left(1 - \left( \dfrac{\|\psi\|_{L^2(\Omega)}}{\|Q\|_{L^2(\R^3)}} \right)^{4/d} \right),
\end{equation}
and the local well-posedness of \eqref{eq:L2CriticalNLS3d} in $H^2(\Omega) \cap H_0^1(\Omega)$, we have the following result.

\begin{tm}
For every initial data $\psi_0 \in  H^2(\Omega) \cap H_0^1(\Omega)$ satisfying $\|\psi_0\|_{L^2(\Omega)} < \|Q\|_{L^2(\R^3)}$, \eqref{eq:L2CriticalNLS3d} admits a global solution $\psi \in C([0,+\infty);H_0^1(\Omega))$.
\end{tm}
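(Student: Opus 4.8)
The plan is to deduce global existence from a single \emph{a priori} estimate, namely a bound on $\|\psi(t)\|_{H_0^1(\Omega)}$ that is uniform over the whole interval of existence, and then to convert the local-in-time $H^2$ theory into a global object in the energy space. First I would invoke the local well-posedness in $H^2(\Omega)\cap H_0^1(\Omega)$ (\Cref{lm:localwpH2H01}, valid in dimension $3$ thanks to $H^2\hookrightarrow L^\infty$) to produce a maximal solution $\psi\in C([0,T_{\max});H^2(\Omega)\cap H_0^1(\Omega))$ starting from $\psi_0$, along which the mass \eqref{eq:consevationmass} and the energy \eqref{eq:consevationenergy} are conserved.

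\textbf{The a priori energy bound.} Mass conservation keeps $\|\psi(t)\|_{L^2(\Omega)}=\|\psi_0\|_{L^2(\Omega)}<\|Q\|_{L^2(\R^3)}$ for all $t$, so that
\[
\eta:=1-\left(\frac{\|\psi_0\|_{L^2(\Omega)}}{\|Q\|_{L^2(\R^3)}}\right)^{4/3}\in(0,1]
\]
is a strictly positive constant independent of time. Applying the stated $3$-d Gagliardo–Nirenberg inequality to $\psi_0$ already gives $E(\psi_0)\ge \tfrac{\eta}{2}\|\nabla\psi_0\|_{L^2(\Omega)}^2\ge 0$, and applying it to $\psi(t)$ together with energy conservation yields
\[
\tfrac{\eta}{2}\,\|\nabla\psi(t)\|_{L^2(\Omega)}^2\le E(\psi(t))=E(\psi_0),\qquad\text{so}\qquad \|\nabla\psi(t)\|_{L^2(\Omega)}^2\le \frac{2E(\psi_0)}{\eta}.
\]
Together with the conserved $L^2$ mass this furnishes a bound on $\|\psi(t)\|_{H_0^1(\Omega)}$ that is uniform on $[0,T_{\max})$.

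\textbf{From the bound to a global solution.} In dimension $2$ one would now upgrade this energy bound to a uniform $H^2$ bound through Brezis–Gallouet and conclude as in \Cref{tm:globalwp}; the obstruction here is precisely that Brezis–Gallouet fails in dimension $3$, so $T_{\max}$ may be finite even though $\|\psi(t)\|_{H_0^1(\Omega)}$ stays bounded. I would therefore build a global \emph{energy-space} solution by a compactness argument: regularize $|\psi|^{4/3}\psi$ into a globally Lipschitz truncation $f_\varepsilon$, for which the corresponding equation is globally well-posed in $H^2(\Omega)\cap H_0^1(\Omega)$; check that the associated truncated energy still dominates the gradient term, so that the uniform $H_0^1$ estimate above persists with constants independent of $\varepsilon$; and finally pass to the limit $\varepsilon\to0$, obtaining a global solution $\psi\in C([0,+\infty);H_0^1(\Omega))$ of \eqref{eq:L2CriticalNLS3d}.

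\textbf{Main obstacle.} The delicate point is this last limit. Since local well-posedness in the energy space $H_0^1(\Omega)$ is not available on a bounded $3$-d domain (\Cref{lm:localwpH01} is not guaranteed there), one cannot simply iterate an $H_0^1$ flow on a uniform time step, which is why only the existence of a global solution, rather than global well-posedness, is asserted. The real work lies in identifying the nonlinear term in the limit: establishing $|\psi_\varepsilon|^{4/3}\psi_\varepsilon\to|\psi|^{4/3}\psi$ in the sense of distributions demands strong compactness, for which the uniform $H_0^1$ bound, the control of $\partial_t\psi_\varepsilon$ in a negative-order space read off from the equation, and an Aubin–Lions type argument are the essential ingredients.
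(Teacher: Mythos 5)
Your proposal is correct and, on the decisive step, identical to the paper's argument: the paper proves this theorem precisely by combining the conservation laws with the sharp $3$-d Gagliardo--Nirenberg inequality displayed just above the statement --- which is exactly your computation $\tfrac{\eta}{2}\|\nabla\psi(t)\|_{L^2(\Omega)}^2\le E(\psi(t))=E(\psi_0)$ --- together with the local well-posedness of \eqref{eq:L2CriticalNLS3d} in $H^2(\Omega)\cap H_0^1(\Omega)$ (\Cref{lm:localwpH2H01}). Where you genuinely add something is after that bound: the paper's proof is a one-line sketch and never explains how a uniform $H_0^1$ bound on the lifespan of the $H^2\cap H_0^1$ solution yields a solution on all of $[0,+\infty)$, given that --- as you correctly stress, and as the paper itself remarks --- Brezis--Gallouet fails in dimension $3$ (so the second part of \Cref{tm:globalwp} is unavailable) and \Cref{lm:localwpH01} has no $3$-d analogue, so the $H^2$ norm could in principle blow up in finite time while the $H^1$ norm stays bounded. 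Your truncation-plus-compactness construction (gauge-invariant globally Lipschitz approximation of the nonlinearity, $\varepsilon$-uniform gradient bound obtained by running the same Gagliardo--Nirenberg argument on the truncated energy, control of $\partial_t\psi_\varepsilon$ in a negative-order space, Aubin--Lions, identification of the nonlinear limit) is the standard way to fill exactly this gap, and it is also consistent with the paper's remark that uniqueness is not ensured: the global object comes from a compactness limit rather than from a well-posed flow. One caveat, which applies equally to the paper's own statement: a solution obtained as such a weak limit naturally lies in $L^\infty_{\mathrm{loc}}([0,+\infty);H_0^1(\Omega))\cap C_w([0,+\infty);H_0^1(\Omega))\cap C([0,+\infty);L^2(\Omega))$, and only an energy \emph{inequality} (not conservation) survives the limit; upgrading weak continuity to the strong continuity in $H_0^1(\Omega)$ literally asserted in the statement requires an additional argument that neither your proposal nor the paper supplies.
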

Note that in the above result, the uniqueness is not ensured a priori because the local well-posedness of \eqref{eq:L2CriticalNLS3d} for data in $H_0^1(\Omega)$ is not guaranteed a priori.\\

\textbf{Kato's method for the local null-controllability result.} The last difficulty appears in the proof of the local null-controllabilty result i.e. \Cref{thm:localnullcontrollabilityNLS}. Indeed, because the nonlinearity $|\psi|^{4/3} \psi$ is not algebraic, to prove the contraction property in the fixed-point argument, one needs to use again Kato's method. Details are left to the reader.

\subsection{An open-loop null-control}
\label{sec:openloopcontrol}

The goal of this part is to prove the following null-controllability result, without \Cref{ass:linschroControllable}, for initial data $\psi(0,\cdot) = \phi_{\lambda}(0,\cdot)$ at time $T=T_{\lambda}$, corresponding exactly to the blow-up time of the blow-up profile $\phi_{\lambda}$.

\begin{tm}\label{tm:mainresult1openloop}
Let $\Omega$ be a smooth bounded domain in $\R^2$. Let $n \geq 1$ and $x_1, \dots, x_n$ be $n$ distinct points in $\Omega$. Let $\omega \subset \Omega$ be an open set such that $x_1, \dots, x_n \in \omega$. Then, there exists $\lambda_0 >0$ such that, for every $\lambda \geq \lambda_0$, there exists a control $v_{\lambda} \in L^{\infty}([0,T_{\lambda}];H^2(\Omega) \cap H_0^1(\Omega))$ supported in $(0,T_{\lambda}) \times \omega$ and an associated global solution $\psi \in C([0,T_{\lambda}]);H^2(\Omega) \cap H_0^1(\Omega))$ of \eqref{eq:L2CriticalNLSControl} starting from $\psi(0, \cdot) = \phi_{\lambda}(0,\cdot)$, where $\phi_\lambda$ is the blow-up profile from Theorem \ref{thm:mainresultgod}, and such that
$$ \psi(T_{\lambda},\cdot) = 0.$$
\end{tm}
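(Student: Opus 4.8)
The plan is to bypass both the fixed-point scheme of \Cref{pr:rapidstab} and any appeal to \Cref{ass:linschroControllable}, and instead write down an \emph{exact} controlled trajectory obtained by damping the known profile. A purely additive correction $\psi=a(t)\phi_\lambda+\tilde w$ with feedback supported in $\omega$ would only reach a state of size $\|\tilde w(T_\lambda,\cdot)\|$, which is exponentially small but not zero, since $\tilde w$ is driven by the part of the nonlinearity left uncontrolled outside $\omega$; to hit $0$ exactly I would instead use a multiplicative ansatz. Fix $\zeta\in C_0^\infty(\omega_1;[0,1])$ with $\zeta\equiv 1$ on a ball $B(x_0,\rho)$, $\rho<r$, a smooth $a:[0,T_\lambda]\to[0,1]$ with $a(0)=0$, $a(T_\lambda)=1$ and $1-a(t)=((T_\lambda-t)/T_\lambda)^\gamma$ for an exponent $\gamma$ fixed below, set $b(t,x)=1-a(t)\zeta(x)$ and define $\psi:=b\,\phi_\lambda$. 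Then $b(0,\cdot)\equiv1$ gives $\psi(0,\cdot)=\phi_\lambda(0,\cdot)$, and $b\equiv1$ near $\partial\Omega$ keeps $\psi\in H^1_0(\Omega)$.

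Next I would identify the control making $\psi$ a genuine solution of \eqref{eq:L2CriticalNLSControl}. Since $\phi_\lambda$ solves \eqref{eq:L2CriticalNLS}, the term $b\,\Delta\phi_\lambda$ cancels against the focusing nonlinearity, leaving
\[
v=\mathds{1}_\omega\Big(i\,\partial_t b\,\phi_\lambda+(\Delta b)\,\phi_\lambda+2\,\nabla b\cdot\nabla\phi_\lambda+(b^3-b)\,|\phi_\lambda|^2\phi_\lambda\Big).
\]
Because $b\equiv1$, hence $\partial_t b=\nabla b=\Delta b=0$ and $b^3-b=0$, on $\Omega\setminus\operatorname{supp}\zeta$, this $v$ is automatically supported in $(0,T_\lambda)\times\omega$; crucially, no uncontrolled error is created outside $\omega$, which is exactly why the target is reached exactly and not merely approximately.

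The crux is then the terminal limit together with the uniform bound on $v$, both dictated by the localization of $\phi_\lambda$. On $\operatorname{supp}\nabla\zeta$ and on $\Omega\setminus B(x_0,\rho)$, i.e. away from $x_0$, the same mechanism as in \Cref{lem:R} — namely \eqref{eq:defRLambda}, the exponential decay \eqref{eq:expdecayQ} of $Q$ and the bound \eqref{eq:estimationrlambda} on $r_\lambda$ — gives $\|\phi_\lambda(t,\cdot)\|_{H^2(\{|x-x_0|\ge\rho\})}\le Ce^{-\delta/(\lambda(T_\lambda-t))}\to0$, so there $\psi\to0$ and the associated pieces of $v$ stay bounded. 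Near $x_0$, where $\zeta\equiv1$ and $b=1-a(t)$, I would combine \Cref{lem:philambda}, $\|\phi_\lambda(t,\cdot)\|_{H^2}\le C/(\lambda^\alpha(T_\lambda-t)^\beta)$, with $\||\phi_\lambda|^2\phi_\lambda\|_{H^2}\le C\|\phi_\lambda\|_{H^2}^3$ (the algebra property of $H^2$ in dimension two) to obtain $\|\psi(t,\cdot)\|_{H^2}\le C(T_\lambda-t)^{\gamma-\beta}$, while the cubic and time-derivative contributions to $v$ are of order $(T_\lambda-t)^{\gamma-3\beta}$ and $(T_\lambda-t)^{\gamma-1-\beta}$ respectively. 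Choosing $\gamma\ge\max(3\beta,\beta+1)$ then forces $\psi(t,\cdot)\to0$ in $H^2(\Omega)$ as $t\to T_\lambda^-$, so that extending by $\psi(T_\lambda,\cdot)=0$ yields $\psi\in C([0,T_\lambda];H^2\cap H^1_0)$, and simultaneously keeps $\|v(t,\cdot)\|_{H^2}$ bounded.

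The main obstacle is this competition at the blow-up point: the focusing term in $v$ scales like $\|\phi_\lambda\|_{H^2}^3$, the most singular object in the problem, so the damping $1-a$ must vanish fast enough both to extinguish $\psi$ and to absorb $|\psi|^2\psi$, yet slowly enough that $\partial_t b$ does not spoil the bound — the window $\gamma\ge\max(3\beta,\beta+1)$ is what reconciles the two. The remaining technical point is the regularity of the transport term $2\,\nabla b\cdot\nabla\phi_\lambda$, supported on the annulus $\operatorname{supp}\nabla\zeta$ where $\phi_\lambda$ is exponentially small: its smooth part $R_\lambda$ is harmless in any $H^k$, and verifying $v\in L^\infty([0,T_\lambda];H^2\cap H^1_0)$ reduces to controlling the merely $H^2$ remainder $r_\lambda$ there via \eqref{eq:estimationrlambda}.
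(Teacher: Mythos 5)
Your construction is correct, and it is essentially the paper's own proof: the paper also defines the trajectory multiplicatively, $\psi=(1-\theta(t)\chi(x))\phi_\lambda$, reads off the control as the commutator and cubic-mismatch terms, notes it is supported in $\omega$ because the cutoff is locally constant elsewhere, and obtains $\psi(T_\lambda,\cdot)=0$ from the exponential smallness of $\phi_\lambda$ away from the blow-up point (\Cref{lem:R} plus \eqref{eq:estimationrlambda}). The genuine difference is the damping schedule, and here the paper's choice is strictly simpler: it takes $\theta\equiv 1$ on $[T_\lambda/2,T_\lambda]$, so inside $\omega_1$ the trajectory is identically zero after $T_\lambda/2$ and the coefficient $(1-\theta\chi)^3-(1-\theta\chi)$ of $|\phi_\lambda|^2\phi_\lambda$ vanishes there; consequently every singular term of the control is either switched off in time before $\phi_\lambda$ becomes large (inside $\omega_1$) or supported where $\phi_\lambda$ is exponentially small (on $\operatorname{supp}\nabla\chi$ and outside $\omega_1$), and boundedness of $v$ requires no comparison with the blow-up rate at all. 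Your power-law damping $1-a(t)=((T_\lambda-t)/T_\lambda)^\gamma$ keeps the control active up to the blow-up time, which is what forces the rate-matching condition $\gamma\geq\max(3\beta,\beta+1)$ against \Cref{lem:philambda}; this is correct but harder than necessary, and it has two small cosmetic costs the paper's choice avoids: $a$ should be taken with $\gamma$ an integer (or only finitely smooth at $t=T_\lambda$), and since $a'(0)\neq 0$ your control does not vanish near $t=0$, so to get the support statement literally as in the theorem you should flatten $a$ near $t=0$. Finally, note that the technical point you flag at the end — the transport term contains $\nabla r_\lambda$, and \eqref{eq:estimationrlambda} bounds $r_\lambda$ only in $H^2$, so the $H^2$-in-space claim for $v$ needs more regularity of $r_\lambda$ than is quoted — is not specific to your variant: the paper's control contains the identical term $2\theta\nabla\chi\cdot\nabla\phi_\lambda$ and dismisses this as straightforward, so you are, if anything, more careful on this point.
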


\begin{proof}
Let $\theta_{\lambda}(t) \in C^{\infty}([0, T_{\lambda}];[0,1])$ be such that $\theta_{\lambda}(t) \equiv 1$ in $[T_{\lambda}/2, T_{\lambda}]$ and $\chi(x) \in C^{\infty}(\Omega;[0,1])$ such that
\begin{equation}
\label{eq:defchi}
\chi = 
		\left\{
			\begin{array}{ll}
				  1  & \text{ in }  \omega_1, 
				\\
				0 & \text{ in } \Omega \setminus \omega.
			\end{array}
		\right.
\end{equation}
Let us define
\begin{equation}
\label{eq:defpsiopenloop}
\psi(t,x) = (1- \theta(t) \chi(x)) \phi_{\lambda}(t,x)\qquad (t,x) \in (0,T_{\lambda})\times\Omega.
\end{equation} 
We look at the equation satisfied by $\psi$, we have
\begin{equation}
	\label{eq:L2CriticalNLSControlProofOp1}
		\left\{
			\begin{array}{ll}
				 i \partial_t \psi + \Delta \psi = f  & \text{ in }  (0,T_{\lambda}) \times \Omega, 
				\\
				\psi = 0 & \text{ on } (0,T_{\lambda})\times \partial \Omega, 
				\\
				\psi(0, \cdot) = \phi_0 & \text{ in } \Omega,
			\end{array}
		\right.
\end{equation}
where
\begin{align}
f &= -i \theta' \chi \phi_{\lambda} - 2 \theta \nabla \chi \cdot \nabla \phi_{\lambda} - \theta( \Delta \chi)  \phi_{\lambda} - (1-\theta\chi) |\phi_{\lambda}|^2 \phi_{\lambda}\notag\\
& = -i \theta' \chi \phi_{\lambda} - 2 \theta \nabla \chi \cdot \nabla \phi_{\lambda} - \theta( \Delta \chi)  \phi_{\lambda} - |\psi|^2 \psi 
 +  \left((1-\theta\chi)^3 - (1- \theta \chi)\right) |\phi_{\lambda}|^2 \phi_{\lambda}\notag\\
 &= - |\psi|^2 \psi  -i \theta' \chi \phi_{\lambda} - 2 \theta \nabla \chi \cdot \nabla \phi_{\lambda} - \theta( \Delta \chi)  \phi_{\lambda} + \left(-2 \theta \chi + 3 \theta^2 \chi^2 -\theta^3 \chi^3 \right)  |\phi_{\lambda}|^2 \phi_{\lambda}\notag\\
& = - |\psi|^2 \psi + v, \label{eq:sourceopenloop}
\end{align}
with
\begin{equation}
v =  -i \theta' \chi \phi_{\lambda} - 2 \theta \nabla \chi \cdot \nabla \phi_{\lambda} - \theta( \Delta \chi)  \phi_{\lambda} + \left(-2 \theta \chi + 3 \theta^2 \chi^2 -\theta^3 \chi^3 \right)  |\phi_{\lambda}|^2 \phi_{\lambda}.
\label{eq:defopennloopcontrol}
\end{equation}

First, note that from the properties of $\theta$ and $\chi$, the support of $v$ is contained in $(0,T_{\lambda}) \times \omega$. Moreover, due to the decomposition of $\phi_{\lambda} = R_{\lambda} + r_{\lambda}$ recalled in \eqref{eq:decompositionphilambda}, the estimate \eqref{eq:boundRLambdaOutsideomega} on $R_\lambda$ of \Cref{lem:R} and the estimate on $r_{\lambda}$ stated in \eqref{eq:estimationrlambda}, it is rather straightforward to prove that $v \in C([0,T_{\lambda}];H^2(\omega))$.

Secondly, by using the definition of $\psi$ in \eqref{eq:defpsiopenloop}, and again the decomposition of $\phi_{\lambda} = R_{\lambda} + r_{\lambda}$ recalled in \eqref{eq:decompositionphilambda}, the estimate \eqref{eq:boundRLambdaOutsideomega} on $R_\lambda$ of \Cref{lem:R} and the estimate on $r_{\lambda}$ stated in \eqref{eq:estimationrlambda}, it is also easy to prove that for some constants $c_1>0, c_2>0$, the following estimates holds
\begin{equation}
\psi \in C([0,T_{\lambda}];H^2(\Omega)\cap H_0^1(\Omega))\ \text{and}\ \|\psi(t,\cdot)\|_{H^2(\Omega)\cap H_0^1(\Omega)} \leq c_1 e^{-\frac{c_2}{\lambda(T_{\lambda}-t)}}\quad \forall t \in [0, T_{\lambda}).
\end{equation}
In particular, we get
\begin{equation}
\psi(T_{\lambda},\cdot) = 0.
\end{equation}
This concludes the proof.
\end{proof}

\appendix

\section{Appendix}
\label{sec:app}

The goal of this part is to provide a proof of \Cref{thm:localnullcontrollabilityNLS}.

\subsection{The controlled linear system.}
Before studying the local controllability of the nonlinear equation \eqref{eq:L2CriticalNLSControl}, let us consider the linear system
\begin{equation}
	\label{eq:L2CriticalNLSControlLinear1}
		\left\{
			\begin{array}{ll}
				 i \partial_t \psi + \Delta \psi =  a^2(x) \varphi^2(t) e^{it \Delta} \psi_0 & \text{ in }  (0,T) \times \Omega, 
				\\
				\psi = 0 & \text{ on } (0,T)\times \partial \Omega, 
				\\
				\psi(T, \cdot) = 0 & \text{ in } \Omega,
			\end{array}
		\right.
\end{equation}
for $\psi_0 \in L^2(\Omega)$ and where $a(x) \in C_c^{\infty}(\omega)$, $a \neq 0$, and $\varphi(t) \in C_c^{\infty}(0,T)$.

Let us define the linear operator 
$$\begin{array}{llll}
S :& L^2(\Omega) &\longrightarrow & L^2(\Omega)\\
    & \psi_0 & \longmapsto & \psi(0,\cdot).
\end{array}$$
where $\psi$ is the mild solution of \eqref{eq:L2CriticalNLSControlLinear1}. One can easily check that $S$ is an injective continuous map. Let us highlight that the surjectivity of $S$ would lead to the exact controllability of the linear system \eqref{eq:L2CriticalNLSControlLinear1}. Thanks to the Hilbert Uniqueness Method, the question of its surjectivity is equivalent to the observability estimates
$$\exists C_{a,\varphi} >0, \forall \psi_0 \in L^2(\Omega), \quad \|\psi_0\|^2_{L^2(\Omega)} \leq C_{a, \varphi} \int_{\R} \varphi(t)^2 \|a e^{it\Delta} \psi_0\|^2_{L^2(\Omega)} dt,$$
which are known to hold from \Cref{ass:linschroControllable}. The linear map $S$ is therefore an isomorphism from $L^2(\Omega)$ to $L^2(\Omega)$. Actually, the following proposition states that $S$ is also an isomorphism from $H^2(\Omega) \cap H_0^1(\Omega)$ to $H^2(\Omega)\cap H_0^1(\Omega)$.
\begin{pr}\label{prop:control_operator}
The Sobolev space $H^2(\Omega)\cap H_0^1(\Omega)$ is $S$ invariant and $S : H^2(\Omega)\cap H_0^1(\Omega) \longrightarrow H^2(\Omega)\cap H_0^1(\Omega)$ is an isomorphism.
\end{pr}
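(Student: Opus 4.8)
The plan is to make $S$ explicit as a Gramian-type operator, deduce invariance of $H^2(\Omega)\cap H_0^1(\Omega)$ directly, and then handle the crucial point — that $S^{-1}$ also preserves this space — by a commutator computation combined with the Fredholm alternative, the gain being that commuting $S$ with the Laplacian only costs a compact (one-order-smoother) error.

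First I would write $S$ explicitly. Applying Duhamel's formula to the backward problem \eqref{eq:L2CriticalNLSControlLinear1} with zero terminal data yields
\[
S\psi_0=\psi(0,\cdot)=i\int_0^T\varphi(s)^2\,e^{-is\Delta}\big(a^2\,e^{is\Delta}\psi_0\big)\,ds=:iG\psi_0 .
\]
Since $a^2$ is real (so multiplication by $a^2$ is self-adjoint) and $(e^{is\Delta})^*=e^{-is\Delta}$, the operator $G$ is bounded, self-adjoint and nonnegative on $L^2(\Omega)$; the observability estimate recalled above gives $G\ge c\,\mathrm{Id}$ for some $c>0$, which is exactly the statement that $G$, hence $S=iG$, is an isomorphism of $L^2(\Omega)$.

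Next I would prove the invariance of $D(A):=H^2(\Omega)\cap H_0^1(\Omega)$, where $A=-\Delta$ with Dirichlet conditions, so that $e^{it\Delta}=e^{-itA}$ commutes with $A$ and preserves $D(A)$. Because $a\in C_c^\infty(\omega)$, multiplication by $a^2$ sends $D(A)$ into $D(A)$ (the product is compactly supported in $\Omega$), so for $\psi_0\in D(A)$ the integrand $e^{-is\Delta}a^2e^{is\Delta}\psi_0$ is a continuous $D(A)$-valued curve with $D(A)$-norm bounded by $C\|\psi_0\|_{D(A)}$; integrating gives $G\psi_0\in D(A)$ and $\|G\psi_0\|_{D(A)}\le C\|\psi_0\|_{D(A)}$. (Equivalently, if $\psi_0\in D(A)$ the source $a^2\varphi^2e^{it\Delta}\psi_0$ lies in $C([0,T];D(A))$ and the standard $H^2$ regularity of the inhomogeneous Schrödinger equation forces $\psi\in C([0,T];D(A))$, whence $S\psi_0\in D(A)$.) Differentiating under the integral and using $Ae^{-isA}=e^{-isA}A$ then produces the commutator identity, valid on $D(A)$,
\[
AG=GA+\tilde R,\qquad \tilde R:=\int_0^T\varphi(s)^2\,e^{-is\Delta}\,[A,a^2]\,e^{is\Delta}\,ds,
\]
where $[A,a^2]=-(\Delta a^2)-2\nabla(a^2)\cdot\nabla$ is a \emph{first-order} operator with coefficients supported in $\omega$; hence $\tilde R$ maps $H_0^1(\Omega)$ boundedly into $L^2(\Omega)$.

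Finally I would deduce that the inverse preserves $D(A)$. Put $B:=I+A$, an isomorphism $D(A)\to L^2(\Omega)$. The identity above gives $BG=GB+\tilde R$ on $D(A)$, so on $L^2(\Omega)$ one has
\[
BGB^{-1}=G+K,\qquad K:=\tilde R\,B^{-1}.
\]
Since $B^{-1}:L^2(\Omega)\to H^2(\Omega)\cap H_0^1(\Omega)$, the embedding $H^2(\Omega)\cap H_0^1(\Omega)\hookrightarrow H_0^1(\Omega)$ is compact (Rellich), and $\tilde R:H_0^1(\Omega)\to L^2(\Omega)$ is bounded, the operator $K$ is compact on $L^2(\Omega)$. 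As $G$ is invertible on $L^2(\Omega)$, $BGB^{-1}=G+K$ is Fredholm of index $0$; it is injective because $BGB^{-1}h=0$ forces $GB^{-1}h=0$ ($B$ injective), then $B^{-1}h=0$ ($G$ injective on $L^2$), then $h=0$. An injective index-zero Fredholm operator is invertible, so $BGB^{-1}$ is an isomorphism of $L^2(\Omega)$, and therefore $G=B^{-1}(BGB^{-1})B$ together with its inverse is bounded $D(A)\to D(A)$; thus $S=iG$ is an isomorphism of $H^2(\Omega)\cap H_0^1(\Omega)$. The main obstacle is exactly this last, surjectivity-type step: a naive regularity bootstrap fails since $A\,S^{-1}g$ cannot be formed directly, and it is the one-derivative gain in the commutator $[A,a^2]$ — which turns the correction $K$ into a compact operator — that makes the Fredholm alternative applicable.
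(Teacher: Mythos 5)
Your proof is correct, and it is genuinely a proof: the paper itself gives none, deferring entirely to Assumption \ref{ass:linschroControllable} and the ``systematic method for building smooth controls for smooth data'' of \cite{EZ10}. Your starting point coincides with that method --- the Gramian representation $S=iG$ with $G=\int_0^T\varphi(s)^2e^{-is\Delta}a^2e^{is\Delta}\,ds$, coercivity of $G$ on $L^2(\Omega)$ from observability, and stability of $D(A)=H^2(\Omega)\cap H_0^1(\Omega)$ under $G$ --- but you diverge at the crucial step, the invertibility of $G$ on $D(A)$. The route of \cite{EZ10} exploits the smooth, compactly supported time cutoff: since $e^{-is\Delta}[A,a^2]e^{is\Delta}=-i\frac{d}{ds}\bigl(e^{-is\Delta}a^2e^{is\Delta}\bigr)$, an integration by parts in time turns your operator $\tilde R$ into $i\int_0^T(\varphi^2)'(s)\,e^{-is\Delta}a^2e^{is\Delta}\,ds$, which is bounded on $L^2(\Omega)$ (not merely $H_0^1(\Omega)\to L^2(\Omega)$); the identity $AG=GA+\tilde R$ then allows a direct bootstrap of the regularity of $G^{-1}g$, with no compactness used, and that argument iterates verbatim to every $D(A^k)$ and to abstract conservative systems, including unbounded domains. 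You instead keep $\tilde R$ as a first-order operator and compensate with Rellich compactness: $K=\tilde R B^{-1}$ is compact, so $BGB^{-1}=G+K$ is an injective, index-zero Fredholm perturbation of the invertible $G$, hence invertible, and conjugating back by $B$ gives the isomorphism on $D(A)$. What your packaging buys is elementarity and economy of hypotheses: you never use the smoothness of $\varphi$ (integrability of $\varphi^2$ would suffice), only the boundedness of $\Omega$; what it gives up is the generality and the painless iteration to higher Sobolev scales that the time-integration-by-parts trick of \cite{EZ10} provides. Both arguments rest on the same structural fact, namely that commuting $A$ past $G$ costs only lower-order errors, and your Fredholm formulation of it is a legitimate, arguably more self-contained, alternative.
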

Let us mention that the proof of \Cref{prop:control_operator} follows from \Cref{ass:linschroControllable} and the systematic method for building smooth controls starting from smooth data in  \cite{EZ10}.

\subsection{The controlled nonlinear system}

In this part, we prove the local null-controllability result stated in \Cref{thm:localnullcontrollabilityNLS}.

\begin{proof}[Proof of \Cref{thm:localnullcontrollabilityNLS}]
For $\psi_0 \in H^2(\Omega) \cap H_0^1(\Omega)$, $\|\psi_0\|_{H^2(\Omega)} \leq \varepsilon$ with $\varepsilon >0$ sufficiently small that would be chosen later,  we consider $\psi \in C([0,T];H^2(\Omega) \cap H_0^1(\Omega))$ the unique solution of 
\begin{equation}
	\label{eq:L2CriticalNLSControlLocal1}
		\left\{
			\begin{array}{ll}
				 i \partial_t \psi + \Delta \psi = - |\psi|^{2} \psi  + a^2(x) \varphi^2(t) e^{it \Delta} \psi_0 & \text{ in }  (0,T) \times \Omega, 
				\\
				\psi = 0 & \text{ on } (0,T)\times \partial \Omega, 
				\\
				\psi(T, \cdot) = 0 & \text{ in } \Omega.
			\end{array}
		\right.
\end{equation}
 We then set $\phi \in C([0,T];H^2(\Omega) \cap H_0^1(\Omega))$ be the unique solution of
\begin{equation}
	\label{eq:L2CriticalNLSControlLocal2}
		\left\{
			\begin{array}{ll}
				 i \partial_t \phi + \Delta \phi = - |\psi|^{2} \psi   & \text{ in }  (0,T) \times \Omega, 
				\\
				\phi = 0 & \text{ on } (0,T)\times \partial \Omega, 
				\\
				\phi(T, \cdot) = 0 & \text{ in } \Omega.
			\end{array}
		\right.
\end{equation}

Let us then define $L\psi_0=\psi(0,\cdot)$ and $K\psi_0=\phi(0,\cdot)$. We therefore have 
$$\forall \psi_0 \in H^2(\Omega) \cap H_0^1(\Omega), \quad L\psi_0 = K\psi_0 +S\psi_0.$$

Our goal is to show that there exists $\eta>0$ such that $B_{H^2(\Omega)}(0,\eta) \subset \text{Im} (L)$.
Notice that the equation $u_0 =L\psi_0$ is equivalent to 
$$\psi_0= S^{-1} u_0 -S^{-1} K \psi_0$$
and this question is then equivalent to find a fixed point of $$B\psi_0:=S^{-1} u_0 -S^{-1} K \psi_0,$$ for $u_0$ sufficiently small in $H^2(\Omega)$. 

Let $0<\eta, \varepsilon \leq 1$ be two small parameters to be chosen later and $u_0 \in B_{H^2(\Omega)}(0, \eta)$. Without loss of generality, we can assume $T\leq 1$.
Since $S : H^2(\Omega) \cap H_0^1(\Omega) \longrightarrow H^2(\Omega) \cap H_0^1(\Omega)$ is an isomorphism, we have that for all $\psi_0 \in H^2(\Omega) \cap H_0^1(\Omega)$,
\begin{align*}\|B\psi_0 \|_{H^2(\Omega)} & \leq C ( \|u_0\|_{H^2(\Omega)}+\|K \psi_0\|_{H^2(\Omega)})\\
& = C ( \|u_0\|_{H^2(\Omega)}+\|\phi(0, \cdot)\|_{H^2(\Omega)}).
\end{align*}
Moreover, we have 
\begin{align*}
    \|\phi(0, \cdot)\|_{H^2(\Omega)} 
    & \leq C  \||\psi|^{2} \psi \|_{C([0,T];H^2(\Omega))}\\
    & \leq C \| \psi \|_{C([0,T];H^2(\Omega))}^{3}.
\end{align*}
Furthermore, by using the fact that the flow map is Lipschitz on the bounded set $B_{H^2(\Omega) \cap H_0^1(\Omega)}(0,1) \times B_{C([0,T]; H^2(\Omega))}(0,1)$, we obtain for all $\psi_0 \in \overline{B_{H^2(\Omega) \cap H_0^1(\Omega)}}(0, \varepsilon)$, 
$$\|\psi\|_{C([0,T];H^2(\Omega))} \leq C \|\psi_0\|_{H^2(\Omega)} \leq C \varepsilon.$$
As a consequence, we deduce that for all $\psi_0 \in \overline{B_{H^2(\Omega)}(0, \varepsilon)}$ with $\psi_0 \in H^2(\Omega) \cap H_0^1(\Omega)$,
$$\|B\psi_0\|_{H^2(\Omega)} \leq C (\eta + \varepsilon^3),$$
for some positive constant $C>0$ independent on $\varepsilon$ and $\eta$. We can therefore choose $\varepsilon_0>0$ such that 
$$\varepsilon^3_0 \leq \frac{\varepsilon_0}{2C},$$
and $\eta= \frac{\varepsilon_0}{2C}$,
and we obtain that the closed ball $\overline{B_{H^2(\Omega) \cap H_0^1(\Omega)}}(0, \varepsilon_0)$ is $B$ invariant. It remains to check that $B$ is a contraction mapping on this ball.
Let $\psi_0, \psi_1 \in \overline{B_{H^2(\Omega) \cap H_0^1(\Omega)}}(0, \varepsilon_0)$ and $\psi_{\psi_0}, \psi_{\psi_1}$ be the associated solutions. We have
\begin{align*}
\|B\psi_0-B\psi_1\|_{H^2(\Omega)} & = \|S^{-1}(K\psi_0-K\psi_1)\|_{H^1(\T^d)}  \\
& \leq C \|\phi_0(0, \cdot)-\phi_1(0, \cdot)\|_{H^2(\Omega)}\\
& \leq C\|\phi_0-\phi_1\|_{C([0,T];H^2(\Omega))} \\
& \leq C \|\psi_{\psi_0}|^{2} \psi_{\psi_0}-|\psi_{\psi_1}|^{2} \psi_{\psi_1}\|_{C([0,T];H^2(\Omega))},
\end{align*}
Then, 
\begin{align*}\|B\psi_0 -B\psi_1\|_{H^2(\Omega)} & \leq C \left(\|\psi_{\psi_0}\|^{2}_{C([0,T];H^2(\Omega))}+\|\psi_{\psi_1}\|^{2}_{C([0,T];H^2(\Omega))}\right) \|\psi_{\psi_0}- \psi_{\psi_1}\|_{C([0,T];H^2(\Omega))}.
\end{align*}
By using once again the fact that the flow map is Lipschitz on bounded set, we obtain a new constant $C>0$ independent on $\varepsilon_0$ such that
\begin{align*}\|B\psi_0- B\psi_1\|_{H^2(\Omega)} &\leq C \left(\|\psi_0\|^{2}_{H^2(\Omega)}+ \|\psi_1\|^{2}_{H^2(\Omega)} \right) \|\psi_0-\psi_1\|_{H^2(\Omega)}\\
& \leq 2C\varepsilon^{2}_0 \|\psi_0-\psi_1\|_{H^2(\Omega)}.
\end{align*}
By now, we set $\varepsilon_0 \leq \frac1{2\sqrt C}$ and $B : \overline{B_{H^2(\Omega) \cap H_0^1(\Omega)}}(0, \varepsilon_0) \longrightarrow \overline{B_{H^2(\Omega) \cap H_0^1(\Omega)}}(0, \varepsilon_0)$ is a contraction mapping and admits an unique fixed point, according to the Banach fixed-point theorem.
\end{proof}

\bigskip

{\noindent \bf Acknowledgements.} The authors would like to thank Hoai-Minh Nguyen for stimulating discussions about \Cref{sec:openloopcontrol}.

\bibliographystyle{alpha}
\small{\bibliography{NLSchrodinger}}

\end{document}